\def\volume{\operatorname{vol}}
\def\op{\operatorname}
\def\svolball#1#2{{\volume(\underline B_{#2}^{#1})}}
\def\svolann#1#2{{\volume(\underline A_{#2}^{#1})}}
\def\svolsp#1#2{{\volume(\partial \underline B_{#2}^{#1})}}
\begin{document}

\newtheorem{Thm}{Theorem}[section]
\newtheorem{Def}{Definition}[section]
\newtheorem{Lem}[Thm]{Lemma}
\newtheorem{Rem}[Thm]{Remark}
\newtheorem{Cor}[Thm]{Corollary}
\newtheorem{sublemma}{Sub-Lemma}
\newtheorem{Prop}{Proposition}[section]
\newtheorem{Example}{Example}[section]
\newcommand{\g}[0]{\textmd{g}}
\newcommand{\pr}[0]{\partial_r}
\newcommand{\dif}{\mathrm{d}}
\newcommand{\bg}{\bar{\gamma}}
\newcommand{\md}{\rm{md}}
\newcommand{\cn}{\rm{cn}}
\newcommand{\sn}{\rm{sn}}
\newcommand{\seg}{\mathrm{seg}}

\newcommand{\Ric}{\mbox{Ric}}
\newcommand{\Iso}{\mbox{Iso}}
\newcommand{\ra}{\rightarrow}
\newcommand{\Hess}{\mathrm{Hess}}
\newcommand{\RCD}{\mathsf{RCD}}
\title{Segment inequality and almost rigidity structures for integral Ricci curvature}
\author{Lina Chen}
\address[Lina Chen]{Department of mathematics, Nanjing University, Nanjing China}

\email{chenlina\_mail@163.com}
\thanks{Supported partially by NSFC Grant 12001268  and a research fund from Nanjing University.} 
\date{\today}

\maketitle

\begin{abstract}

\setlength{\parindent}{10pt} \setlength{\parskip}{1.5ex plus 0.5ex
minus 0.2ex} 

We will show the Cheeger-Colding segment inequality for manifolds with integral Ricci curvature bound. By using this segment inequality, the almost rigidity structure results for integral Ricci curvature will be derived by a similar method as in \cite{CC1}.  And the sharp H\"older continuity result of \cite{CoN} holds in the limit space of manifolds with integral Ricci curvature bound.
 \end{abstract}

\section{Introduction}

This paper is concerned with Riemannian manifolds with integral Ricci curvature bound. Consider a $n$-manifold $M$. For each $x\in M$ let $\rho\left( x\right) $ denote the smallest
eigenvalue for the Ricci tensor $\op{Ric} : T_xM\to T_xM$. For a constant $H$, let $\rho_H=\max\{-\rho(x)+(n-1)H, 0\}$ and for $R>0$, let
$$\bar k(H,p)=\left(\frac{1}{\volume(M)}\int_M \rho_H^p dv\right)^{\frac{1}{p}}= \left(-\kern-1em\int_M \rho_H^p dv\right)^{\frac{1}{p}}, \quad \bar k(H ,p, R)= \sup_{x\in M}\left(-\kern-1em\int_{B_R(x)} \rho_H^p dv\right)^{\frac{1}{p}}.$$ 
 Then  $\bar k(H, p)/\bar k(H, p, R)$  measures the average amount of Ricci
 curvature lying below a given bound, in this case,  $(n-1)H$, in the $L^p$ sense. Clearly $\bar k(H, p) = 0$ iff $\Ric_M \ge (n-1)H$.
 
For a complete $n$-manifold $M$ with Ricci curvature lower bound, $\op{Ric}_M\geq (n-1)H$, Cheeger-Colding \cite{CC1} showed that the almost rigidity properties hold on $M$:  almost volume cone implies almost metric cone and almost splitting theorem holds. And in \cite{CC2, CC3, CC4}, they studied the degeneration of the convergent sequences of manifolds with lower Ricci curvature bound and derived many fundamental properties about the regularity and singularity of the limit spaces and the stability of the sequences. For instance, they proved that the regular set has full measure and volume is convergence in the non-collapsing case (see \cite{Co} for the smooth case). Since then many importance works about manifolds with bounded Ricci curvature were done, like Colding-Naber's (\cite{CoN}) sharp H\"older continuity of 
geodesic balls in the interior of segments and Cheeger-Naber \cite{ChN} proved the codimension 4 conjecture.
 
 For complete $n$-manifolds $\{(M, x)\}$ with integral Ricci curvature bound, $\bar k(H, p, 1)$ is sufficient small, 
 an interesting question is whether $\{(M, x)\}$ have similar geometric/topological properties or degenerations as manifolds with Ricci curvature lower bound.
 
In \cite{PW1}, Petersen-Wei showed that with integral Ricci curvature bound, the Laplacian comparison and relative volume comparison hold (see \cite{CW} for an improved relative volume comparison).  And then by using similar methods as Cheeger-Colding \cite{CC1, CC2, CC3, CC4},  Petersen-Wei \cite{PW2} and Tian-Zhang \cite{TZ} showed that when the volume of a unit ball has definite positive lower bound, $\volume(B_1(x))\geq v>0$, which is called non-collapsing case, the almost rigidity structure and some degeneration results hold for integral Ricci curvature. In the collapsing case i.e., $\volume(B_1(x))$ can be arbitrary small, Dai-Wei-Zhang \cite{DWZ} derived a local Sobolev constant estimate on manifolds with integral Ricci curvature bound and thus the gradient estimate and maximal principle hold too. A problem that one can not generalize Cheeger-Colding's results to manifolds with integral Ricci curvature bound in the collapsing case similarly as in the non-collapsing case is that there is no general Cheeger-Colding segment inequality in the collapsing case.
 
 In this paper, we will prove the following segment inequality in manifolds with integral Ricci curvature bound which has improved \cite[Proposition 2.30]{TZ} (see Corollary~\ref{seg-one} and Corollary~\ref{seg-two}) and is effective in the collapsing case. In the following, we will always assume $H\leq 0$ for simplicity where when $H>0$, we should assume upper bound of the radius $r<\frac{\pi}{2\sqrt H}$.
 
Consider a complete $n$-manifold $M$ and a function $u: M\to \Bbb R$. For $y, z\in M$, let
 $$\mathcal{F}_u(y, z)=\inf\left\{\int_{\gamma}u, \, \gamma \text{ is a minimal normal geodesic from } y \text{ to } z \right\}.$$

\begin{Thm}[Segment inequality] \label{seg-ine}
Given $n>0, p>\frac{n}2, H$, let $M$ be a complete $n$-manifold. For a geodesic ball $B_r(x)\subset M$, $r<R$, let $A_1, A_2$ be two measurable subsets of $B_r(x)$. Then for any function $u$ in $M$ satisfying that $\left|\mathcal F_u(y, z)\right|\leq C_0$ for each point $(y, z)\in B_{2r}(x)\times B_{2r}(x)$,  the following holds
$$\int_{A_1\times A_2}\left|\mathcal F_u(y, z)\right|dydz \leq 2c(n,H, R)r\left(\volume(A_1)+\volume(A_2)\right)\left(\int_{B_{2r}(x)}|u|+C_0\volume(B_{2r}(x))c(n,p)\bar k^{\frac12}(H, p, 2r)\right),$$ 
where
$$c(n, H, R)=\sup_{0<\frac{t}{2}\leq s\leq t\leq 2R}\frac{\underline{\mathcal{A}}_{H}(t)}{\underline{\mathcal{A}}_{H}(s)},\quad c(n, p)=\left(\frac{(n-1)(2p-1)}{2p-n}\right)^{\frac{1}{2}}$$
and $\underline{\mathcal{A}}_{H}(t)d\theta dt$ is the volume element of the simply connected $n$-space form $\underline{M}_H^n$ of constant curvature $H$.
\end{Thm}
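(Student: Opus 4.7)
The strategy is to adapt Cheeger--Colding's classical proof of the segment inequality (\cite[Thm.~2.11]{CC1}) to the integral Ricci setting, replacing their pointwise Bishop--Gromov comparison by the $L^p$--type mean curvature comparison of Petersen--Wei (\cite{PW1}), and absorbing the new error term by means of the pointwise hypothesis $|\mathcal{F}_u|\le C_0$.

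\emph{Step 1 (Reduction and polar coordinates).} For a.e.\ pair $(y,z)\in A_1\times A_2$ the minimizing unit-speed geodesic $\gamma_{yz}$ is unique and $|\mathcal{F}_u(y,z)|=\bigl|\int_{\gamma_{yz}}u\bigr|\le\int_0^{d(y,z)}|u|(\gamma_{yz}(s))\,ds$. Split this $s$--integral at the midpoint; by the $y\leftrightarrow z$ symmetry it suffices to bound the ``near-$y$'' half
\[
I:=\int_{A_1\times A_2}\int_0^{d(y,z)/2}|u|(\gamma_{yz}(s))\,ds\,dy\,dz.
\]
Fix $y\in A_1$, write $z=\exp_y(t\theta)$ with $t=d(y,z)<2r$ and $dz=\mathcal{J}_y(t,\theta)\,d\theta\,dt$, and swap the $s$-- and $t$--integrals to obtain
\[
\int_{A_2}\int_0^{d(y,z)/2}|u|(\gamma_{yz}(s))\,ds\,dz\le\int_{S^{n-1}_y}\int_0^{r}|u|(\exp_y(s\theta))\int_{2s}^{2r}\mathcal{J}_y(t,\theta)\,dt\,ds\,d\theta.
\]
The condition $s\le t/2\le r$ ensures that the intermediate points $\exp_y(s\theta)$ lie in $B_r(y)\subset B_{2r}(x)$.

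\emph{Step 2 (Integral Ricci Jacobian comparison).} Under $\Ric\ge(n-1)H$, Bishop--Gromov makes $\mathcal{J}_y(t,\theta)/\underline{\mathcal{A}}_H(t)$ non-increasing, and the constraint $t/2\le s\le t\le 2r$ built into the definition of $c(n,H,R)$ delivers the desired bound $\int_{2s}^{2r}\mathcal{J}_y(t,\theta)\,dt\le c(n,H,R)\,r\,\mathcal{J}_y(s,\theta)$. In the integral Ricci setting, the Petersen--Wei comparison (cf.\ also \cite{CW}) provides
\[
\bigl\|(\partial_t\log\mathcal{J}_y(\cdot,\theta)-(n-1)\g_H)_+\bigr\|_{L^{2p}([0,2r])}\le c(n,p)\Bigl(\int_0^{2r}\rho_H^p(\exp_y(\tau\theta))\,d\tau\Bigr)^{1/(2p)},
\]
with constant $c(n,p)$ exactly as in the theorem. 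Integrating the associated Riccati inequality along the radial direction then yields an additive correction to the Jacobian bound, controlled, via one Cauchy--Schwarz, by the radial integral of $\rho_H^{1/2}$.

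\emph{Step 3 (Assembly via $C_0$ and symmetrization).} Integrating the main Jacobian estimate against $|u|\,ds\,d\theta\,dy$ and reassembling in polar coordinates at $y$ gives the leading contribution $c(n,H,R)\,r\,\vol(A_1)\int_{B_{2r}(x)}|u|$. For the correction term, no $L^q$ bound on $|u|$ is available; one instead decomposes the $(y,z)$-domain into a ``good'' part on which the Jacobian comparison holds with a negligible relative error (and the above main-term analysis applies) and a ``bad'' part where one reverts to the pointwise hypothesis $|\mathcal{F}_u(y,z)|\le C_0$. H\"older's inequality between $\rho_H^{1/2}\in L^{2p}$ and the dual $L^{2p/(2p-1)}$ weight bounds the measure of the bad set by $\vol(B_{2r}(x))\,c(n,p)\,\bar k^{1/2}(H,p,2r)$, so that the bad-set contribution is at most $C_0$ times this quantity. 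Adding the symmetric near-$z$ half produces the factor $\vol(A_1)+\vol(A_2)$ and yields the claimed inequality.

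\emph{Main obstacle.} The key technical point is Step 2: extracting from the Petersen--Wei machinery a Jacobian comparison whose leading constant is exactly $c(n,H,R)$ (rather than one depending on $\bar k$) and whose additive error has the precise form $C_0\cdot c(n,p)\,\vol(B_{2r}(x))\,\bar k^{1/2}(H,p,2r)$ after a single, well-placed Cauchy--Schwarz. The half-power $\bar k^{1/2}$ and the Petersen--Wei constant $c(n,p)$ only appear with these exact values if the $L^{2p}/L^{2p/(2p-1)}$ pairing is performed at the right stage. This sharp bookkeeping is precisely what makes the inequality effective in the collapsing regime and so improves \cite[Prop.~2.30]{TZ}.
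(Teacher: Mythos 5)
Your overall skeleton (polar coordinates at an endpoint, splitting at the midpoint, the Petersen--Wei $L^{2p}$ comparison for the Jacobian error, H\"older to produce $c(n,p)\bar k^{\frac12}$, and the hypothesis $|\mathcal F_u|\le C_0$ to absorb the comparison error) is the same as the paper's, but your Steps 1--2 contain a step that fails. You treat the near-$y$ half in polar coordinates centered at $y$, and the resulting key inequality $\int_{2s}^{2r}\mathcal J_y(t,\theta)\,dt\le c(n,H,R)\,r\,\mathcal J_y(s,\theta)$ is false even under $\Ric\ge (n-1)H$: in your integral $t$ ranges over $[2s,2r]$, i.e.\ $s\le t/2$, which is the \emph{opposite} of the constraint $t/2\le s\le t$ built into $c(n,H,R)$. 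Bishop--Gromov only gives $\mathcal J_y(t,\theta)\le \frac{\underline{\mathcal A}_H(t)}{\underline{\mathcal A}_H(s)}\mathcal J_y(s,\theta)$, and for $s\le t/2$ the model ratio is unbounded (in $\Bbb R^n$, $\int_{2s}^{2r}t^{n-1}dt\approx r^{n}$ while $r\,s^{n-1}\to 0$ as $s\to 0$). In fact the intermediate statement you aim for, that the near-$y$ half is controlled by $\vol(A_1)\int_{B_{2r}(x)}|u|$, is itself false (take $u$ concentrated near a point $w$, $A_1$ a tiny ball around $w$, $A_2=B_r(x)$). The correct pairing, which is what the paper does, is to estimate the half of the geodesic \emph{far} from the coordinate center: bound $\left|\int_{t/2}^{t}u\right|$ in coordinates at $y$ (producing the $\vol(A_1)$ term) and $\left|\int_0^{t/2}u\right|$ in coordinates at $z$ (producing $\vol(A_2)$); only then is the comparison ratio bounded by $c(n,H,R)$.

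Your Step 3 also diverges from the paper in how $C_0$ enters, and as sketched it does not close. A good/bad decomposition can be made rigorous, but your bad-set bound ``$\le \vol(B_{2r}(x))c(n,p)\bar k^{\frac12}$'' is dimensionally inconsistent as stated (the correct fiberwise bound carries an extra factor $\approx c(n,H,R)\,r$ coming from the $t$-integration of $\int_{t/2}^t\psi\,\mathcal A$), and any threshold defining ``negligible relative error'' degrades the leading constant, so the obstacle you yourself flag (recovering exactly $c(n,H,R)$ and the exact error constant) is not resolved by this route; note also that after you replace $u$ by $|u|$ in Step 1, the hypothesis $|\mathcal F_u|\le C_0$ no longer controls the halves you are integrating, so the decomposition must be applied before that replacement. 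The paper avoids the decomposition entirely: it writes $\mathcal A(t,\theta)\le \frac{\underline{\mathcal A}_H(t)}{\underline{\mathcal A}_H(s_0)}\mathcal A(s_0,\theta)+\underline{\mathcal A}_H(t)\int_{s_0}^{t}\psi\,\frac{\mathcal A(\tau,\theta)}{\underline{\mathcal A}_H(\tau)}d\tau$, multiplies by the \emph{signed} half-integral $\left|\int_{t/2}^{t}u\right|$, bounds this coefficient of the error term by $C_0$ directly (off the cut locus the sub-segment is the unique minimal geodesic between its endpoints, which lie in $B_{2r}(x)$, so its $u$-integral equals a value of $\mathcal F_u$), passes to $\int_{t/2}^t|u|$ only in the main term, and applies H\"older together with the Laplacian comparison (2.1.1) once at the end to get $\int\psi\le c(n,p)\vol(B_{2r})\bar k^{\frac12}(H,p,2r)$, which yields the stated constants without any loss.
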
 

\begin{Rem} \label{rem-1}
{\rm (1.2.1)} In Cheeger-Colding segment inequality \cite{CC1},  it assumes that $u$ is nonnegative.

{\rm (1.2.2)} For manifolds with integral Ricci curvature bound, we always assume $\bar k(H, p, 1)\leq \delta<\delta(n, p)$. By Lemma~\ref{pack} below, we know that: 

{\rm (1.2.2a)} For each $R\geq1$, $\bar k(H, p, R)\leq B^2(n, H) \bar k(H, p, 1)$, thus when we apply Theorem~\ref{seg-ine} in balls with radius $R\geq 1$, we have
$$\int_{A_1\times A_2}\left|\mathcal F_u(y, z)\right|dydz \leq 2c(n,H, R)r\left(\volume(A_1)+\volume(A_2)\right)\left(\int_{B_{2r}(x)}|u|+C_0\volume(B_{2r}(x))B(n, H)c(n, p)\delta^{\frac12}\right).$$ 

{\rm (1.2.2b)} For each $r<1$, we have $r^2\bar k(H, p, r)\leq \Psi(r | n, p, H)\bar k(H, p, 1)$, thus
$$\int_{A_1\times A_2}\left|\mathcal F_u(y, z)\right|dydz \leq 2c(n,H)\left(\volume(A_1)+\volume(A_2)\right)\left(r\int_{B_{2r}(x)}|u|+C_0\volume(B_{2r}(x)) \Psi(r | n, p, H)\delta^{\frac12})\right),$$ 
where $\Psi$ is a function such that $\Psi\to 0$ if $r \to 0$ and $n, p, H$ fixed. 

Thus we can apply Theorem~\ref{seg-ine} to any balls under the condition $\bar k(H, p, 1)\leq \delta<\delta(n, p)$.
\end{Rem}
By using the segment inequality and Dai-Wei-Zhang's work \cite{DWZ}, as \cite{CC1} we can prove the following almost rigidity structure in manifolds with integral Ricci curvature bound.

\begin{Thm}[Almost rigidity struture] \label{alm-rig}
 Given $n>0, H, p>\frac{n}{2}$, there exist $\delta_0=\delta(n, H, p)>0$ and $\epsilon_0=\epsilon(n, H, p)>0$, such that for any $\delta <\delta_0, \epsilon<\epsilon_0$, if  a complete $n$-manifold $M$ satisfies $\bar k(H, p, 1)<\delta$, then 
 
 {\rm (1.3.1) Almost splitting:} when $H=0$, for any $q_{\pm}\in M$ with $L=d(q_{+}, q_{-})<\delta^{-\min\{\frac{p}{2n}, \frac12\}}$, for $x\in M$ with $d(x, q_{\pm})\geq Lr$, $1\geq r>0$ and $e(x)=d(x, q_+)+d(x, q_-)-L\leq \epsilon$, there exists a length space $X$ such that 
 $$d_{GH}(B_r(x), B_r((0, x^*)))\leq \Psi(\delta, \epsilon, L^{-1} | n, p, r),$$
where $B_r((0, x^*))\subset \Bbb R\times X$.

{\rm (1.3.2) Almost metric cone:} if for $b>0$
\begin{equation}(1+\epsilon)\frac{\volume(\partial B_b(x))}{\svolsp{H}{b}}\geq \frac{\volume(B_b(x))}{\svolball{H}{b}},\label{vol-con}\end{equation}
then for each $b>\alpha>0$, there exists a compact length space $X$ with $\op{diam}(X)\leq (1+\Psi(\epsilon, \delta | n, p, H, b, \alpha))\pi$,
$$d_{GH}(B_{b-\alpha}(x), B_{b-\alpha}((0, x^*)))\leq \Psi(\epsilon, \delta | n, p, H, b, \alpha),$$
where $B_{b-\alpha}((0, x^*))\subset \Bbb R\times_{\op{sn}_H(t)} X$, $\underline{B}_r^H\subset \underline{M}_H^n$,
$$\op{sn}_H(t)=\left\{\begin{array}{cc} \frac{\sin\sqrt H t}{\sqrt H}, & H>0;\\
t, & H=0;\\
\frac{\sinh\sqrt{-H}t}{\sqrt{-H}}, & H<0.\end{array}\right.$$

 \end{Thm}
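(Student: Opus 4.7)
The plan is to adapt the Cheeger-Colding scheme from \cite{CC1} to the integral Ricci setting, using the segment inequality (Theorem~\ref{seg-ine}) in place of the classical one, together with Petersen-Wei's $L^p$ Laplacian comparison and the local Sobolev inequality, gradient estimate and maximum principle of Dai-Wei-Zhang \cite{DWZ}. In both parts (1.3.1) and (1.3.2) the strategy has the same shape: find a distinguished function $h$ whose Hessian should vanish in the rigid model, show via a Bochner argument that $|\Hess h|^2$ (plus auxiliary errors) is small in $L^2$ on a ball, convert this $L^2$-smallness into \emph{pointwise} smallness along generic minimal geodesics using Theorem~\ref{seg-ine}, and finally read off the Gromov-Hausdorff approximation by integrating along those geodesics.

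For the almost splitting (1.3.1), I would start from the excess $e(x)=d(x,q_+)+d(x,q_-)-L$ and the Busemann-type functions $b_\pm(y)=d(y,q_\pm)-d(x,q_\pm)$. An Abresch-Gromoll type bound in the integral setting (essentially Petersen-Wei's $L^p$ Laplacian comparison applied to $b_\pm$) shows that $e$ and $b_++b_-$ are $\Psi$-small on $B_r(x)$. I would then solve the Dirichlet problem $\Delta h_\pm = 0$ on $B_{2r}(x)$ with boundary data $b_\pm$, so that the DWZ gradient estimate and maximum principle give $L^2$-control of $h_\pm-b_\pm$ and of $\bigl||\nabla h_\pm|-1\bigr|$. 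Plugging $h_\pm$ into the Bochner formula, pairing against a DWZ cut-off, and absorbing the curvature remainder $\int \rho_H|\nabla h_\pm|^2$ by Hölder together with $\bar k(H,p,2r)\le \Psi$, one obtains
\[\int_{B_r(x)}|\Hess h_\pm|^2\,dv\leq \Psi.\]
Theorem~\ref{seg-ine} applied to $u=|\Hess h_+|+\bigl||\nabla h_+|^2-1\bigr|$ (which is bounded along geodesics thanks to the DWZ gradient bound) upgrades this to pointwise smallness along minimal segments between generic pairs, and the map $y\mapsto h_+(y)$ together with level-set data then yields the Gromov-Hausdorff approximation by a ball in $\mathbb{R}\times X$.

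For the almost metric cone (1.3.2), I would run the same mechanism on the radial function $r(y)=d(y,x)$ and the model function $f=\tfrac12\op{sn}_H^2\circ r$. Petersen-Wei's relative volume comparison together with hypothesis~\eqref{vol-con} forces $\Delta f - n\,\op{sn}_H'(r)$ to be small in $L^1$ (and, after Cauchy-Schwarz using the Laplacian comparison, in $L^2$) on $B_b(x)$. The Bochner formula for $f$ then delivers
\[\int_{B_b(x)}\bigl|\Hess f-\op{sn}_H'(r)\,g\bigr|^2\,dv\leq \Psi,\]
once the integral Ricci error is absorbed as above. A second application of Theorem~\ref{seg-ine}, combined with rescaling and a compactness argument on the unit sphere built from the level sets of $r$, identifies $B_{b-\alpha}(x)$ in Gromov-Hausdorff distance with a ball in the warped product $\mathbb{R}\times_{\op{sn}_H}X$, with the diameter bound on $X$ coming from the standard "metric cone implies diameter $\leq \pi$" argument applied to the warping function.

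The main obstacle I anticipate is tracking the error terms uniformly across scales. In the pointwise Ricci setting of \cite{CC1} the Bochner identity has a clean sign, whereas here one must repeatedly absorb remainders of the form $\int \rho_H|\nabla h|^2$ via Hölder and $\bar k(H,p,\cdot)$, and one must verify that the final $\Psi$ depends only on $\delta,\epsilon,L^{-1}$ (respectively $\delta,\epsilon,b,\alpha$). This closure step relies crucially on the scale-invariant forms of Theorem~\ref{seg-ine} recorded in Remark~\ref{rem-1}(1.2.2a)-(1.2.2b), which convert the $r$-dependent constant $c(n,H,R)r$ into a tame factor at both large and small scales; together with cut-off functions whose gradient and Laplacian bounds come from the DWZ Sobolev inequality, this should allow the entire error budget to be closed.
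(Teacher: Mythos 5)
There is a genuine gap at the point where you invoke the segment inequality. You propose to apply Theorem~\ref{seg-ine} to $u=|\Hess_{h_+}|+\bigl||\nabla h_+|^2-1\bigr|$ and assert that this is admissible ``because $u$ is bounded along geodesics thanks to the DWZ gradient bound.'' That is exactly where the naive transplant of \cite{CC1} fails in the integral Ricci setting: Theorem~\ref{seg-ine} is not the classical segment inequality --- it carries the extra error term $C_0\volume(B_{2r}(x))\,c(n,p)\,\bar k^{1/2}(H,p,2r)$ and therefore \emph{requires} an a priori bound $|\mathcal F_u(y,z)|\leq C_0$ for all pairs in $B_{2r}(x)\times B_{2r}(x)$. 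The gradient bound $|\nabla h_\pm|\leq\Lambda$ controls the line integral of the \emph{directional} Hessian, since $\int_\gamma \Hess_{h_\pm}(\gamma',\gamma')=\langle\nabla h_\pm,\gamma'\rangle(z)-\langle\nabla h_\pm,\gamma'\rangle(y)$ telescopes, but it gives no bound whatsoever on $\int_\gamma|\Hess_{h_\pm}|$; the Hessian is only controlled in $L^2$ on the ball, so no finite $C_0$ is available for the full Hessian norm and the hypothesis of Theorem~\ref{seg-ine} is simply not met. The paper's proof resolves this precisely by testing the Pythagorean theorem (and, in the cone case, the cosine law) with $\bigl|\int_\gamma\Hess_{h_\pm}(\gamma',\gamma')\bigr|$ in place of $\int_\gamma|\Hess_{h_\pm}|$, i.e.\ by the mechanism of Corollary~\ref{seg-two}, where $C_0=2\Lambda$ comes from the telescoping identity. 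Your argument needs the same substitution, together with a check that the Cheeger--Colding derivation of the almost Pythagorean identity only ever needs the signed directional Hessian along the relevant geodesics; without it the error budget cannot even be set up. (The part of $u$ given by $\bigl||\nabla h_+|^2-1\bigr|$ is fine, being pointwise bounded.)

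Two smaller deviations from the paper's route are worth noting. For (1.3.2) the paper does not run Bochner directly on the radial model function (which is only Lipschitz across the cut locus, and for $H\neq0$ the correct model is an antiderivative of $\op{sn}_H$, namely $f=-\int_t^b\op{sn}_H(s)\,ds$, not $\tfrac12\op{sn}_H^2\circ r$); instead it introduces the Dirichlet solution $\tilde f$ of $\Delta\tilde f=n\,\op{sn}_H'(r)$ with $\tilde f=f$ on $\partial B_b(x)$, proves a mean Laplacian estimate $\bigl|-\kern-1em\int_{B_b(x)}\Delta\tilde f-\Delta f\bigr|\leq\Psi$ from the ball-based volume cone hypothesis \eqref{vol-con} (with the additional comparison inequalities (3.3)--(3.5) when $H\neq0$, including a careful choice of the inner radius $a$ in terms of $\epsilon,\delta$), and then obtains the $C^0$, $C^1$, $C^2$ estimates of Lemma~\ref{alm-lem} via the maximum principle, integration by parts, the gradient estimate, the segment inequality, and the cut-off plus Bochner. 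Also, the excess estimate in the collapsing case is proved in the paper through the DWZ mean value inequality (Corollary~\ref{mea-cor}), not merely the $L^p$ Laplacian comparison. These are repairable differences of implementation, but the missing bounded-$\mathcal F_u$ hypothesis in your segment-inequality step is the substantive gap.
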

 \begin{Rem}
 In \cite{CC1}, under the almost volume cone condition 
 \begin{equation}(1+\epsilon)\frac{\volume(A_{a, b}(x))}{\volume(\partial B_a(x))}\geq \frac{\svolann{H}{a, b}}{\svolsp{H}{a}}, \label{ann-con}\end{equation}
 where $a<b$, they derived the annulus metric cone structure.  
 \end{Rem}
 
 A direct corollary of (1.3.1) is that the splitting theorem holds in the limit spaces of integral Ricci curvature bound sequences.  
 \begin{Cor}[Splitting theorem] \label{spl-thm}
 Given $n, p>\frac{n}{2}$, assume $(X, d)$ is a Gromov-Hausdorff limit of a sequence of complete $n$-manifolds $(M_i, x_i)$ with $\bar k_{M_i}(0, p, 1)\leq \delta_i\to 0$. Then if $X$ contains a line, $(X, d)$ splits out a $\Bbb R$ factor isometrically, i.e.
 $$(X, d) \cong (\Bbb R\times Y, d_{\Bbb R}\times d_Y),$$
 where $(Y, d_Y)$ is a length space.
 \end{Cor}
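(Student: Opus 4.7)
The plan is to lift the line in $X$ to almost-lines in the approximating manifolds $M_i$, apply the almost splitting theorem (1.3.1) to obtain an isometric $\mathbb{R}$-factor in every small ball of $X$, and then assemble these local splittings into a global one via the Busemann function argument of \cite{CC1}.

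Setup and choice of parameters: fix a line $\gamma:\mathbb{R}\to X$ and set $q_\pm^L=\gamma(\pm L)$, so that $d(q_+^L, q_-^L)=2L$. For an arbitrary $z\in X$ and $r\in(0,1]$, I choose sequences $L_k\to\infty$ and $i_k\to\infty$ with $\delta_{i_k}\to 0$, together with points $z_k, q_{\pm,k}\in M_{i_k}$ that converge under the Gromov--Hausdorff approximations to $z$ and $q_\pm^{L_k}$. The excess
$$e_k:=d(z_k, q_{+,k})+d(z_k, q_{-,k})-d(q_{+,k}, q_{-,k})\longrightarrow d(z, q_+^{L_k})+d(z, q_-^{L_k})-2L_k,$$
and the right-hand side tends to $0$ as $L_k\to\infty$ by the standard monotonicity of the one-sided Busemann function and the line property of $\gamma$. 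A diagonal extraction makes $e_k, L_k^{-1}, \delta_{i_k}$ all tend to $0$ simultaneously, while $d(z_k, q_{\pm,k})\geq L_k r$ holds eventually.

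Local splitting: applying Theorem~\ref{alm-rig}(1.3.1) with $H=0$, basepoint $z_k$ and radius $r$ yields length spaces $X_k$ with basepoints $x_k^*$ such that
$$d_{GH}\bigl(B_r(z_k),\, B_r((0,x_k^*))\bigr)\leq \Psi(\delta_{i_k}, e_k, L_k^{-1}\mid n, p, r)\longrightarrow 0.$$
After passing to a subsequence so that $(X_k, x_k^*)$ converges in pointed Gromov--Hausdorff sense to some $(Y_z, y_z^*)$, and combining with $B_r(z_k)\to B_r(z)$, I obtain an isometry $B_r(z)\cong B_r((0, y_z^*))$ inside $\mathbb{R}\times Y_z$. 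Thus every ball in $X$ of radius $\leq 1$ splits off an $\mathbb{R}$-factor, and (tracking the proof of (1.3.1)) the $\mathbb{R}$-direction at $z$ is the tangent to the limit of the minimizing segments from $q_-^{L_k}$ to $q_+^{L_k}$ through $z$.

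Globalization: define the two-sided Busemann functions
$$b_\pm(z)=\lim_{L\to\infty}\bigl(d(z, q_\pm^L)-L\bigr),$$
which exist and are $1$-Lipschitz on $X$ by monotonicity. The local splitting above forces $b_++b_-\equiv 0$ on $X$ and shows that through every $z\in X$ passes a geodesic line realizing $b_+$. Setting $Y=b_+^{-1}(0)$ with the induced length metric and sending $z$ to $(b_+(z),\pi(z))$, where $\pi(z)$ is the intersection of $Y$ with the $b_+$-line through $z$, should produce the desired isometry $(X,d)\cong(\mathbb{R}\times Y, d_\mathbb{R}\times d_Y)$. The main obstacle is this last step: patching the pointwise local splittings of Step~2 into a coherent global line-flow, which requires checking compatibility of the local $\mathbb{R}$-directions. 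This is precisely the Cheeger--Colding Busemann argument of \cite{CC1}, which transfers verbatim once the local almost-splitting is in hand.
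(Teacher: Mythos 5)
There is a genuine gap at the very first step, and it propagates. For an arbitrary $z\in X$ off the line, the excess $e_L(z)=d(z,\gamma(L))+d(z,\gamma(-L))-2L$ is indeed monotone nonincreasing in $L$, but monotonicity only gives $e_L(z)\downarrow b_+(z)+b_-(z)\geq 0$; it does \emph{not} give convergence to $0$. The vanishing of $b_++b_-$ off the line is not a metric triviality --- it is essentially the content of the splitting theorem and in every known proof it is extracted from the curvature hypothesis (Laplacian comparison/excess estimate, or the almost splitting itself). So your choice of $z_k, q_{\pm,k}$ does not satisfy the hypothesis $e(x)\leq\epsilon$ of Theorem~\ref{alm-rig}(1.3.1) for arbitrary $z$, and the ``local splitting at every $z$'' is unjustified. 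Worse, your globalization step then argues that the local splittings force $b_++b_-\equiv 0$, which is circular: you used (implicitly) $b_+(z)+b_-(z)=0$ to get the local splitting at $z$ in the first place. A secondary point you gloss over is that (1.3.1) is stated only for radii $r\leq 1$, so even at points of the line you cannot directly split balls containing a given far-away $z$ without rescaling.

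The intended argument (the paper treats this as a direct corollary of (1.3.1)) avoids both issues by working only at points \emph{on} the line, where the excess is exactly $0$ for every $L$: fix $R>0$, rescale by $R^{-1}$ and use Lemma~\ref{pack} (inequality \eqref{c-com}, and the scaling identity behind \eqref{small}) to check that $\bar k_{R^{-1}M_i}(0,p,1)\leq R^2B^2(n,0)\delta_i\to 0$, lift $\gamma(0),\gamma(\pm L)$ to points in $M_i$, and apply (1.3.1) with basepoint near $\gamma(0)$; letting $i\to\infty$ and then $L\to\infty$ gives, for every $R$, that $B_R(\gamma(0))$ is GH-arbitrarily close to, hence (after extracting sublimits of the factors $X_k$, using the doubling property of the limit measure for precompactness) isometric to, a ball $B_R((0,y_R^*))\subset\mathbb{R}\times Y_R$. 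Since every $z\in X$ lies in some such ball, a diagonal pointed-limit argument as $R\to\infty$ (the product structure passes to pointed GH limits) yields the global isometry $X\cong\mathbb{R}\times Y$; no separate patching of local $\mathbb{R}$-directions or independent proof of $b_++b_-\equiv 0$ is needed. If you prefer your pointwise route, you must first establish smallness of the excess at arbitrary $z$, e.g.\ by iterating the excess estimate Theorem~\ref{exc-est} along a chain from the line to $z$, which is exactly the extra work your appeal to ``monotonicity'' skips.
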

  
 And for a limit space $(X, d)$ as in Corollary~\ref{spl-thm}, as in \cite{CoN}, we have the following sharp H\"older continuity of small balls in the interior of a limit geodesic:
 \begin{Thm}[Sharp H\"older continuity] \label{sha-hol}
Given $n, p>\frac{n}{2}$, there is $\alpha=\alpha(n, p), C(n, p), r_0=r(n, p)>0$ such that if $(X, x, d)$ is a limit space of a sequence of complete $n$-manifolds $(M_i, x_i)$ with $\bar k_{M_i}(-1, p, 1)\leq \delta_i\to 0$ and $\gamma: [0, l]\to X$ is a unit speed limit geodesic of $\gamma_i: [0, l]\to M_i$, then for any fixed small $\sigma>0$, and any $0<r<r_0\sigma l$, $\sigma l<s<t<l-\sigma l$,
$$d_{GH}(B_r(\gamma(s)), B_r(\gamma(t)))\leq \frac{C(n, p)}{\sigma l}r|s-t|^{\alpha}.$$
\end{Thm}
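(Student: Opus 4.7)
The strategy is to follow the parabolic approximation scheme of Colding-Naber~\cite{CoN}, with the segment inequality (Theorem~\ref{seg-ine}) and the Dai-Wei-Zhang gradient/heat-kernel estimates~\cite{DWZ} standing in for the smooth-Ricci inputs. By Gromov-Hausdorff stability it suffices to produce, for each $i$ and each admissible triple $s,t,r$, a map
\[
\Phi_i : B_r(\gamma_i(s))\longrightarrow B_r(\gamma_i(t))
\]
which is a $\Psi_i$-Gromov-Hausdorff approximation with
\[
\Psi_i\le \frac{C(n,p)}{\sigma l}\,r|s-t|^{\alpha}+\epsilon_i,\qquad \epsilon_i\to 0,
\]
and then pass to the limit.

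On $M_i$, write $b_i^-(y)=d(y,\gamma_i(0))$; Theorem~\ref{alm-rig}(1.3.1) shows $b_i^-$ is $L^1$-close to being affine on a definite tubular neighborhood of $\gamma_i\bigl|_{[\sigma l,l-\sigma l]}$. I would then set $h_{i,\tau}$ to be the heat flow, for time $\tau\sim|s-t|^2$, of a cutoff of $b_i^-$ supported in this neighborhood, and establish the parabolic Hessian estimate
\[
-\kern-1em\int_{B_\rho(\gamma_i(u))}\!\Bigl(\bigl||\nabla h_{i,\tau}|^2-1\bigr|+\tau|\Hess h_{i,\tau}|^2\Bigr)\le \Psi(\delta_i,\tau | n,p,\sigma l)
\]
for all $u\in[\sigma l,l-\sigma l]$. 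The derivation is the standard Bochner argument
\[
\tfrac{1}{2}\Delta|\nabla h_{i,\tau}|^2=|\Hess h_{i,\tau}|^2+\langle\nabla h_{i,\tau},\nabla\Delta h_{i,\tau}\rangle+\Ric(\nabla h_{i,\tau},\nabla h_{i,\tau}),
\]
integrated against the heat kernel: the Ricci term is absorbed into $\bar k(H,p,\cdot)$ via the DWZ gradient/Sobolev estimates, while Theorem~\ref{seg-ine} applied to $u=\bigl||\nabla h_{i,\tau}|^2-1\bigr|$ promotes the $L^1$ smallness over a ball to $L^1$ smallness along minimizing segments joining points of $B_r(\gamma_i(s))$ to points of $B_r(\gamma_i(t))$.

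Given these estimates, the time-$|s-t|$ gradient flow $\Phi_i$ of $h_{i,\tau}$ is, off a set of negligible measure, a $(1+\Psi_i)$-bi-Lipschitz map from $B_r(\gamma_i(s))$ into a neighborhood of $B_r(\gamma_i(t))$, which suffices for the Gromov-Hausdorff distortion bound. The sharp exponent $\alpha=\alpha(n,p)$ emerges from the parabolic scaling of the Hessian norm (an extra $\tau^{-1/2}$) together with the $\bar k^{1/2}$ factor and the constant $c(n,p)=((n-1)(2p-1)/(2p-n))^{1/2}$ entering the right-hand side of Theorem~\ref{seg-ine}, balanced against $\tau\sim |s-t|^2$.

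The principal technical obstacle is the collapsing regime, where $\volume(B_r(\gamma_i(s)))$ may be arbitrarily small; this is exactly why Theorem~\ref{seg-ine} has been formulated with a volume-linear right-hand side, so that after normalization the estimate in Remark~\ref{rem-1}(1.2.2) remains collapse-insensitive, and why the DWZ local Sobolev estimate is needed to promote the $L^1$ control into effective bounds along the flow. Once the $\Phi_i$ are produced, standard Gromov-Hausdorff stability transports the Hölder inequality to the limit space $X$ and completes the proof.
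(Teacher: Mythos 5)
Your plan is the direct Colding--Naber argument carried out on the manifolds $M_i$: build parabolic approximations $h_{i,\tau}$ of the distance function, prove scale-invariant excess and Hessian estimates on balls of \emph{every} radius $r$, and show that the time-$|s-t|$ gradient flow gives a $\Psi_i$-Gromov--Hausdorff approximation with $\Psi_i\le \frac{C}{\sigma l}r|s-t|^{\alpha}+\epsilon_i$. This is precisely the step that is not available under an integral Ricci bound, and the paper says so explicitly. The parabolic approximation one actually has is Theorem~\ref{par-app}, whose error terms are of the form $t^{1-\frac{n}{4p}}\delta_i^{\frac12}$ (and similarly in the Hessian bound (4.1.5)); with $t\sim r^2$ this is $r^{2-\frac{n}{2p}}\delta_i^{\frac12}$, which is \emph{not} $O(r^2)$ for fixed $i$: once $r$ is small compared with a power of $\delta_i$, the error dominates, so on a fixed $M_i$ one cannot obtain the Colding--Naber inputs $-\kern-1em\int_{B_r(\gamma_i(u))}e\le Cr^2$ and $\int_{\gamma_i}-\kern-1em\int_{B_r}|\Hess_{b^{\pm}_{r^2}}|^2\le C$ uniformly in $r$, and hence no maps $\Phi_i$ with error $\frac{C}{\sigma l}r|s-t|^{\alpha}+\epsilon_i$ valid for all $0<r<r_0\sigma l$. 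This is also why Theorem~\ref{sha-hol} is stated only for the limit space rather than as an intrinsic estimate on $M_i$. Your per-scale Bochner/segment-inequality estimates are fine as far as they go, but they do not close this gap; and the principal obstacle is not collapsing (which the segment inequality and the Dai--Wei--Zhang estimates do handle), but the failure of the parabolic estimates to scale correctly below the scale set by $\delta_i$.

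The paper's actual route is Deng's scheme from the RCD setting: the fixed-scale estimates of Theorem~\ref{par-app}, Corollary~\ref{int-par} and Theorem~\ref{seg-ine} are used on $M_i$ only to bound integrated distance-distortion functionals ($dt_1$, $dt_2$, $dt_3$) of the gradient flows of $-\nabla d_{\gamma(0)}$ and of the parabolic approximations; these bounds are then passed to the limit space $X$, where the $\delta_i$-terms vanish, and a two-step induction --- extend in time (Lemma~\ref{time-extend}), extend in radius (Lemma~\ref{radius-extend}), combined in Corollary~\ref{main} and the lemma following it --- propagates measure-theoretic distortion control along $\gamma$ down to all scales $r\le r_0$, from which the H\"older bound follows as in Colding--Naber. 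A further point your proposal does not address, and which the paper flags as the key difficulty, is that on $X$ the induction cannot be started at points of $\gamma$ itself, since the distortion of small balls centered on a limit geodesic under the flow is not directly controlled as it is on manifolds; the paper instead starts at auxiliary points $z\in B_r(\gamma(t_0))$ with $e(z)\le c\,r^2$, chosen arbitrarily close to the geodesic, and takes a limit at the end. The exponent $\alpha(n,p)$ emerges from this iteration, not from a single parabolic scaling balance as you suggest.
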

 By this sharp H\"older continuity, as in \cite{CoN}, we can derive some structure results about the regularity of the limit spaces (see Theorem~\ref{reg-str}) and show that the isometric group of $(X, d)$ is a Lie group (see Theorem~\ref{lie-gro}).

The paper is organized as follows. In section 2, we will supply some preliminaries about manifolds with integral Ricci curvature bound. In section 3, we will give the proof of our main results about the segment inequality and almost rigidity structure. In section 4, we generalize the sharp H\"older continuity of the balls in the interior of a segment for the limit spaces of manifolds with integral Ricci curvature bound and give some regularity structure of the limit spaces.  By using these results, we will see that the isometric groups of the limit spaces are Lie groups. 
\section{Preliminaries}
In this section, we will supply some notions and properties we need in manifolds with integral Ricci curvature bound.

For a complete $n$-manifold $M$, $x\in M$, let $M\setminus \op{Cut}_x$ be equipped with the polar coordinate and let $\mathcal{A}(t, \theta)d\theta dt$ be the volume element. Let $\mathcal A(t, \theta)=0$ when $t$ increases and $\mathcal A(t, \theta)$ is undefined. Let $r=d(x, \cdot)$ be the distance function from $x$ and let 
$\psi=\max\{\Delta r-\underline{\Delta}_H r, 0\}$, where $\underline{\Delta}_H$ is the Laplacian operator in the simply connected space $\underline{M}_H^n$ with constant sectional curvature $H$. Then $\psi=0$ if $\op{Ric}_M\geq (n-1)H$. 
\begin{Thm}  \label{com-int}
Give $n, p> \frac{n}{2}$,$r>0, H$, for a complete $n$-manifold $M$, fix $x \in M$, 
then the following holds:

{\rm (2.1.1)} Laplacian comparison estimates \cite[Lemma 2.2]{PW1}: 
\begin{eqnarray}
\int_{0}^{r}\psi^{2p}\mathcal A(t, \theta)dt & \leq & c(n, p)^{2p} \int_{0}^{r} \rho_H^p\, \mathcal A(t, \theta)dt,  \label{lap-com}
\end{eqnarray}
where $c(n, p)= \left(\frac{(n-1)(2p-1)}{2p-n}\right)^{\frac{1}{2}}$;

{\rm (2.1.2)} Relative volume comparison: \cite{PW1, CW}:
There exists $\delta_0=\delta(n, p, H)$, such that if $\bar k(H, p, 1)\leq \delta\leq \delta_0$, then for each $0<r<R$,
\begin{equation}
\frac{\volume(B_R(x))}{\svolball{H}{R}}\leq e^{c(n, p, H)(\max\{R, 1\}-r)\delta^{\frac12}}\frac{\volume(B_r(x))}{\svolball{H}{r}}.
 \label{vol-comp}
\end{equation}

\begin{equation}
\volume(B_r(x))\leq e^{c(n, p, H)\max\{r, 1\}\delta^{\frac12}}\svolball{H}{r}. \label{vol-com}
\end{equation}
\begin{equation}
\frac{\volume(\partial B_r(x))}{\volume(B_r(x))}\leq \frac{\svolsp{H}{r}}{\svolball{H}{r}}+c(n, p)\bar k^{\frac12}(H, p, r).\label{sph-ball}
\end{equation}
\end{Thm}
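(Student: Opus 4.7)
\textbf{The plan} is to prove the Laplacian-comparison estimate in (2.1.1) by a Bochner/Riccati argument along radial geodesics, and then bootstrap it via H\"older and a Bishop--Gromov-type monotonicity device to derive the three inequalities in (2.1.2).

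For (2.1.1), fix $\theta\in S^{n-1}\subset T_xM$ and work along the unit-speed geodesic $t\mapsto\exp_x(t\theta)$ up to the cut value, with the convention $\mathcal{A}(t,\theta)=0$ past cut. Setting $m=\Delta r$ and $\bar m=\underline{\Delta}_H r$, the Bochner formula gives $m'+m^2/(n-1)\leq -\Ric(\pr,\pr)$, while the model satisfies $\bar m'+\bar m^2/(n-1)=-(n-1)H$. Subtracting on $\{\psi>0\}$ (where $\psi=m-\bar m$) and using $\bar m\geq 0$, which is forced by $H\leq 0$ (or $r<\pi/(2\sqrt H)$ when $H>0$) so that $m=\psi+\bar m\geq\psi$, one gets $\psi'+\psi^2/(n-1)\leq\rho_H$. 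Multiplying by $\psi^{2p-2}\mathcal{A}$, integrating on $[0,r]$, writing $\psi^{2p-2}\psi'=(\psi^{2p-1})'/(2p-1)$ and integrating by parts with $\mathcal A'=m\mathcal A$ (dropping the nonnegative boundary term at $r$ and using $m\geq\psi$ once more) yields
\[
\left(\tfrac{1}{n-1}-\tfrac{1}{2p-1}\right)\int_0^r \psi^{2p}\mathcal A\,dt\leq \int_0^r \rho_H\,\psi^{2p-2}\mathcal A\,dt,
\]
with the LHS coefficient equal to $(2p-n)/((n-1)(2p-1))=c(n,p)^{-2}>0$ since $p>n/2$. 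Applying H\"older on the RHS with exponents $p$ and $p/(p-1)$ gives $\int\rho_H\psi^{2p-2}\mathcal A\leq (\int\rho_H^p\mathcal A)^{1/p}(\int\psi^{2p}\mathcal A)^{(p-1)/p}$; dividing out and raising to the $p$-th power produces (2.1.1).

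For (2.1.2), start from $(\log(\mathcal A(t,\theta)/\underline{\mathcal A}_H(t)))'=m-\bar m\leq\psi$, which integrates to the with-error pointwise Bishop--Gromov inequality
\[
\mathcal A(r,\theta)\underline{\mathcal A}_H(s)\leq \mathcal A(s,\theta)\underline{\mathcal A}_H(r)\exp\!\Bigl(\int_s^r \psi(u,\theta)\,du\Bigr),\qquad s\leq r.
\]
The remaining task is to control the $\exp$ factor by $1+O(\bar k^{1/2})$. Combining (2.1.1) integrated over $\theta$ (so that $\int_{B_r}\psi^{2p}dv\leq c(n,p)^{2p}\int_{B_r}\rho_H^p dv\leq c(n,p)^{2p}\vol(B_r)\bar k(H,p,r)^p$) with H\"older gives
\[
\int_{B_R(x)}\psi\,dv\leq \vol(B_R(x))^{1-1/(2p)}\Bigl(\int_{B_R(x)}\psi^{2p}dv\Bigr)^{1/(2p)}\leq c(n,p)\vol(B_R(x))\,\bar k^{1/2}(H,p,R).
\]
Feeding this into the standard Bishop--Gromov monotonicity template produces the exponential factor $e^{c(n,p,H)(R-r)\delta^{1/2}}$ and hence the relative comparison (\ref{vol-con} type) first inequality of (2.1.2). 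Letting $r\to 0$, where the ratio tends to $1$ since small geodesic balls are almost Euclidean, yields the absolute volume bound. For the boundary-to-volume inequality, the pointwise comparison above, cross-multiplied and integrated over $\theta$ and $t\in[0,r]$, expresses $\vol(\partial B_r)\svolball{H}{r}-\svolsp{H}{r}\vol(B_r)$ as a sum over pairs $(t,\theta)$ of error terms $\mathcal A(t,\theta)\underline{\mathcal A}_H(r)\int_t^r\psi$, and the same H\"older step then produces the pointwise $+\,c(n,p)\bar k^{1/2}$ correction.

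\textbf{Main obstacle.} The main technical point is calibrating the H\"older step so that the $\bar k$ error appears with exponent exactly $1/2$, independent of $p$. This forces the interpolation $\psi=(\psi^{2p}\mathcal A)^{1/(2p)}\cdot\mathcal A^{-1/(2p)}\cdot\mathcal A$, which pairs the $\psi^{2p}$ bound from (2.1.1) against a volume factor of exponent $(2p-1)/(2p)$ so that $(\vol\cdot\bar k^p)^{1/(2p)}\cdot\vol^{1-1/(2p)}=\vol\cdot\bar k^{1/2}$ emerges cleanly. A secondary technicality is the cut locus: the convention $\mathcal A=0$ past cut keeps $\psi'+\psi^2/(n-1)\leq\rho_H$ valid in the barrier/distributional sense, so that the integration-by-parts boundary contributions either vanish or carry the favorable sign, and no explicit surface-term bookkeeping at singular points is needed.
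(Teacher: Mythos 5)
The paper does not prove Theorem~\ref{com-int}; it quotes it from \cite{PW1, CW}, so your argument should be measured against the standard proofs there. Your route for (2.1.1) is the standard one, but it contains a sign error at the integration-by-parts step that the argument as written does not survive. Write $m=\Delta r$, $\bar m=\underline{\Delta}_H r$. After multiplying $\psi'+\tfrac{\psi^2}{n-1}\le\rho_H$ by $\psi^{2p-2}\mathcal A$ and integrating, the term you must control is $-\tfrac{1}{2p-1}\int_0^r\psi^{2p-1}m\,\mathcal A\,dt$ sitting on the left-hand side; to replace it by $-\tfrac{1}{2p-1}\int_0^r\psi^{2p}\mathcal A\,dt$ you would need $m\le\psi$ on $\{\psi>0\}$, whereas there $m=\psi+\bar m\ge\psi$. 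So ``using $m\ge\psi$ once more'' goes the wrong way: the quantity you discard, $-\tfrac{1}{2p-1}\int\psi^{2p-1}\bar m\,\mathcal A$, is nonpositive and lives on the side where dropping it destroys the inequality (and $\bar m$ is large near $t=0$, so it is not negligible). Your intermediate inequality is nevertheless true; the correct derivation, as in \cite[Lemma 2.2]{PW1}, keeps the cross term you threw away at the very first step. Subtracting the two Riccati equations gives the stronger $\psi'+\tfrac{\psi^2}{n-1}+\tfrac{2\psi\bar m}{n-1}\le\rho_H$ on $\{\psi>0\}$; multiplying by $(2p-1)\psi^{2p-2}\mathcal A$ and using $(\psi^{2p-1})'\mathcal A=(\psi^{2p-1}\mathcal A)'-\psi^{2p}\mathcal A-\psi^{2p-1}\bar m\,\mathcal A$, the retained term $\tfrac{2(2p-1)}{n-1}\psi^{2p-1}\bar m\,\mathcal A$ absorbs the unfavorable $\psi^{2p-1}\bar m\,\mathcal A$ precisely because $\tfrac{2(2p-1)}{n-1}\ge 1$ (i.e.\ $4p\ge n+1$, guaranteed by $p>\tfrac n2$). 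One then lands on $(\psi^{2p-1}\mathcal A)'+\tfrac{2p-n}{n-1}\psi^{2p}\mathcal A\le(2p-1)\rho_H\psi^{2p-2}\mathcal A$, and your H\"older step finishes correctly from there.

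For (2.1.2) your calibration of H\"older is exactly right and is the heart of the matter: $\int_{B_R}\psi\le c(n,p)\vol(B_R)\bar k^{1/2}(H,p,R)$ is what converts the pointwise $\mathcal A/\underline{\mathcal A}_H$ comparison into the $\delta^{1/2}$-exponential form. But ``feeding this into the standard Bishop--Gromov template'' hides real work: the precise form \eqref{vol-comp} with the factor $\max\{R,1\}-r$ is the content of the improvement in \cite{CW} and requires integrating a genuine differential inequality for $\vol(B_r)/\svolball{H}{r}$, not just the two-radius comparison. Also, in \eqref{sph-ball} your crude bound $\int_t^r\psi\le\int_0^r\psi$ produces an extra factor $r\,\svolsp{H}{r}/\svolball{H}{r}$ in front of $\bar k^{1/2}$; to get the clean constant $c(n,p)$ one should apply Fubini to $\int_0^r\underline{\mathcal A}_H(t)\int_t^r\psi\,\tfrac{\mathcal A}{\underline{\mathcal A}_H}\,d\tau\,dt$ and use the monotonicity of $\tau\mapsto\svolball{H}{\tau}/\svolsp{H}{\tau}$. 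These are fixable bookkeeping points; the only step that is actually wrong as stated is the one in the first paragraph.
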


By the relative volume comparison and a simple packing argument, we have that
\begin{Lem}[\cite{PW2}]
\label{pack}
Given $n, p>\frac{n}{2}, H$, there is $\delta=\delta(n, p, H)>0$ such that if a complete $n$-manifold $M$ satisfies $\bar k(H, p, 1)\leq \delta$, then for any $R\geq 1\geq r>0$, we have that
\begin{equation}\bar k(H, p, R)\leq B(n, H)^2 \, \bar k(H, p, 1),  \label{c-com}\end{equation}
and 
\begin{equation}\bar k_{r^{-1}M}(0, p, 1)\leq \Psi(\delta, r | n, H, p), \label{small}\end{equation}
where $B(n, H) = \left(2\frac{\svolball{H}{1}}{\svolball{H}{\frac{1}{2}}}\right)^{\frac{1}{2p}}$ and $r^{-1}M$ denote $(M, r^{-2}g)$, $g$ is the Riemannian metric of $(M, g)$.
\end{Lem}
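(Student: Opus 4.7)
Both parts reduce to the relative volume comparison (2.2) of Theorem~\ref{com-int}, combined with a packing argument for (2.2.1) and a rescaling argument for (2.2.2). Throughout I take $\delta_0 = \delta_0(n,p,H)$ small enough that the exponential factors $e^{c(n,p,H)\delta^{1/2}}$ appearing in (2.2) are bounded by $2$.

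For (2.2.1), fix $x \in M$ and $R \geq 1$, and pick a maximal $1$-separated subset $\{x_i\}_{i=1}^{N}\subset B_R(x)$, so that $\bigcup_i B_1(x_i) \supset B_R(x)$ while the half-balls $\{B_{1/2}(x_i)\}$ are pairwise disjoint inside $B_{R+1/2}(x)$. The defining inequality $\int_{B_1(x_i)}\rho_H^p\,dv \leq \vol(B_1(x_i))\,\bar k(H,p,1)^p$ combined with the covering property yields
$$\int_{B_R(x)}\rho_H^p\,dv \;\leq\; \bar k(H,p,1)^p\sum_i \vol(B_1(x_i)).$$
Applying (2.2) at each $x_i$ with inner radius $1/2$ produces $\vol(B_1(x_i)) \leq B(n,H)^{2p}\,\vol(B_{1/2}(x_i))$ directly from the definition of $B(n,H)$. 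Summing and using disjointness gives $\sum_i \vol(B_1(x_i)) \leq B(n,H)^{2p}\,\vol(B_{R+1/2}(x))$; one more invocation of (2.2) replaces $\vol(B_{R+1/2}(x))$ by a constant multiple of $\vol(B_R(x))$, and, after absorbing this constant, division by $\vol(B_R(x))$ followed by the $p$-th root yields (2.2.1).

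For (2.2.2), I work on $r^{-1}M = (M, r^{-2}g)$. The $(0,2)$-Ricci tensor is scale invariant, so the Ricci endomorphism transforms as $\widetilde{\op{Ric}} = r^2\,\op{Ric}$, hence $\tilde\rho_0 = r^2\,\rho_0$; combining with $\tilde B_1(\tilde x) = B_r(x)$ and $d\tilde v = r^{-n}\,dv$ gives $\bar k_{r^{-1}M}(0,p,1) = r^2\,\bar k_M(0,p,r)$. The pointwise inequality $\rho_0 \leq \rho_H + |(n-1)H|$, valid for $H\leq 0$ by a case-by-case check, then produces $\bar k_M(0,p,r) \leq C(p)\bigl(\bar k_M(H,p,r) + |(n-1)H|\bigr)$. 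For $r\leq 1$, the inclusion $B_r(x)\subset B_1(x)$ and (2.2) give
$$\bar k_M(H,p,r)^p \;\leq\; \frac{\vol(B_1(x))}{\vol(B_r(x))}\,\bar k(H,p,1)^p \;\leq\; 2\,\frac{\svolball{H}{1}}{\svolball{H}{r}}\,\bar k(H,p,1)^p,$$
and using $\bar k(H,p,1)\leq\delta$ one obtains
$$\bar k_{r^{-1}M}(0,p,1) \;\leq\; C(n,p,H)\Bigl[r^2\Bigl(\frac{\svolball{H}{1}}{\svolball{H}{r}}\Bigr)^{1/p}\delta + r^2|H|\Bigr],$$
which is an admissible $\Psi(\delta, r\,|\,n,H,p)$ since it tends to $0$ as $\delta, r\to 0$ with $n,p,H$ fixed (note that $\svolball{H}{1}/\svolball{H}{r} \sim r^{-n}$ and $2p > n$ ensure $r^2(\svolball{H}{1}/\svolball{H}{r})^{1/p} \to 0$ as $r\to 0$).

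The main delicate step is the uniform-in-$R$ control needed in (2.2.1): one must show that $\vol(B_{R+1/2}(x))/\vol(B_R(x))$ is bounded by a constant depending only on $n, H$, uniformly in $R\geq 1$. This follows from (2.2) together with the elementary observation that, under $H\leq 0$, the model ratio $\svolball{H}{R+1/2}/\svolball{H}{R}$ is continuous in $R$ with a finite limit as $R\to\infty$ (equal to $1$ for $H=0$ and to $e^{(n-1)\sqrt{-H}/2}$ for $H<0$), hence uniformly bounded on $[1,\infty)$ by a constant of the form $C(n,H)$ which is then absorbed into $B(n,H)^{2p}$.
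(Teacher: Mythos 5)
Your proposal is correct and follows essentially the same route as the paper: for \eqref{c-com} a maximal $1$-separated set in $B_R(x)$ together with the relative volume comparison \eqref{vol-comp} applied on each unit ball, and for \eqref{small} the scaling identity $\bar k_{r^{-1}M}(0,p,1)=r^2\bar k_M(0,p,r)$, the pointwise bound $\rho_0\leq \rho_H+(n-1)|H|$, and the volume ratio $\volume(B_1(x))/\volume(B_r(x))\leq 2\,\svolball{H}{1}/\svolball{H}{r}$, exactly as in the paper (which converts $H$ to $0$ after rescaling rather than before, an immaterial reordering). The one divergence is in the packing step: you place the disjoint half-balls in $B_{R+1/2}(x)$ and compare back to $\volume(B_R(x))$, paying an extra factor $C(n,H)$, so your final constant is slightly larger than the stated $B(n,H)^2$; the paper instead bounds $\sum_i\volume(B_{1/2}(x_i))$ directly by $\volume(B_R(x))$, tacitly treating the half-balls as contained in $B_R(x)$, which yields the clean constant but glosses over precisely the containment point you handle. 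Since all later uses of the lemma only need some explicit constant depending on $n,H$, this discrepancy is harmless.
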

\begin{proof}
Take $\delta=\delta(n, p, H)$ such that in \eqref{vol-comp}, for any $0\leq r_1\leq r_2\leq 1$, 
$$\frac{\volume(B_{r_2}(x))}{\svolball{H}{r_2}}\leq 2 \frac{\volume(B_{r_1}(x))}{\svolball{H}{r_1}}.$$

For $R>1$, for any $x\in M$, take a maximal set $\{x_i\in B_R(x)\}$, such that for $i\neq j$, $d(x_i, x_j)\geq 1$. Then
\begin{eqnarray*}
-\kern-1em\int_{B_R(x)} \rho_H^p dv & \leq & \frac{1}{\volume(B_R(x))}\sum_i\int_{B_1(x_i)}\rho_H^p\\
&\leq & \frac{\sum_i\volume(B_1(x_i))}{\volume(B_R(x))}\bar k^p(H, p, 1)\\
&=& \frac{\sum_i\volume(B_{\frac12}(x_i))\frac{\volume(B_1(x_i))}{\volume(B_{\frac12}(x_i))}}{\volume(B_R(x))}\bar k^p(H, p, 1)\\
&\leq & 2\frac{\sum_i\volume(B_{\frac12}(x_i))}{\volume(B_R(x))}\frac{\svolball{H}{1}}{\svolball{H}{\frac12}}\bar k^p(H, p, 1)\\
& \leq & B^{2p}(n, H)\bar k^p(H, p, 1).
\end{eqnarray*}

For $r<1$, since $p>\frac{n}{2}$, 
$$r^2\bar k(H, p, r)\leq r^2\left(\sup_x\frac{\volume(B_1(x))}{\volume(B_r(x))}\right)^{\frac1p}\bar k(H, p, 1)\leq r^2\left(2\frac{\svolball{H}{1}}{\svolball{H}{r}}\right)^{\frac1p}\bar k(H, p, 1)\leq \Psi(r | n, p, H)\delta.$$
And since
$$\bar k_{r^{-1}M}(r^2H, p, 1)=r^2\bar k(H, p, r), \quad \bar k(0, p, 1)\leq \bar k(H, p, 1)-(n-1)H,$$
we derive that
$$\bar k_{r^{-1}M}(0, p, 1) \leq \Psi(\delta, r | n, H, p).$$
\end{proof}

By the relative volume comparison, the set of manifolds with integral Ricci curvature bound is precompact.
\begin{Thm}[\cite{PW1} Precompactness] \label{compact}
For $n\geq 2, p>\frac{n}{2}, H$, there exists $c(n, p, H)$ such that if a sequence of  compact Riemannian $n$-manifold $M_i$ satisfies that 
$\bar{k_i}(H, p, 1)\leq c(p, n,  H)$, then there is a subsequence of $\{(M_i, x_i)\}$ that converges in the pointed Gromov-Hausdorff topology.
\end{Thm}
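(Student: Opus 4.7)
The plan is to deduce the precompactness from the relative volume comparison (2.1.2) via Gromov's classical precompactness criterion: a family of pointed proper metric spaces is precompact in the pointed Gromov--Hausdorff topology if and only if, for every $R>0$ and $\epsilon>0$, there is a uniform upper bound $N(R,\epsilon)$ on the number of $\epsilon$-balls needed to cover $B_R(x_i)$ in each member of the family. Thus the whole task is to produce a uniform packing estimate on each ball $B_R(x_i)\subset M_i$.

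First I would fix $R,\epsilon>0$ and, inside each $M_i$, choose a maximal $\epsilon$-separated subset $\{y_j\}\subset B_R(x_i)$. By maximality, the balls $B_\epsilon(y_j)$ cover $B_R(x_i)$, so it suffices to bound the cardinality $N_i$ of $\{y_j\}$. The standard disjointness trick: the balls $B_{\epsilon/2}(y_j)$ are pairwise disjoint and all lie inside $B_{R+\epsilon/2}(x_i)$, giving
\[
N_i \cdot \min_j \volume(B_{\epsilon/2}(y_j)) \;\leq\; \volume(B_{R+\epsilon/2}(x_i)).
\]
Hence I only need a uniform upper bound on the ratio $\volume(B_{R+\epsilon/2}(y_k))/\volume(B_{\epsilon/2}(y_j))$ for $y_j,y_k\in B_R(x_i)$, since $B_{R+\epsilon/2}(x_i)\subset B_{2R+\epsilon/2}(y_k)$ for any $y_k$ in question.

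This is exactly what the relative volume comparison \eqref{vol-comp} provides. Choosing $c(n,p,H)$ no larger than the threshold $\delta_0(n,p,H)$ of Theorem~\ref{com-int}, the hypothesis $\bar k_i(H,p,1)\leq c(n,p,H)$ lets us apply \eqref{vol-comp} on each $M_i$ with the \emph{same} exponent. Applying it twice, once to expand from radius $\epsilon/2$ to radius $2R+\epsilon/2$ at $y_k$, yields a constant $C(n,p,H,R,\epsilon)$, independent of $i$, such that
\[
\frac{\volume(B_{2R+\epsilon/2}(y_k))}{\volume(B_{\epsilon/2}(y_j))} \;\leq\; C(n,p,H,R,\epsilon)\cdot\frac{\svolball{H}{2R+\epsilon/2}}{\svolball{H}{\epsilon/2}},
\]
after using that both $y_j$ and $y_k$ lie within distance $2R$ of each other and comparing through a common point. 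Thus $N_i\leq N(R,\epsilon)$ uniformly.

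The main (and really only) subtlety is ensuring the volume comparison is applied in a way that is genuinely independent of the individual manifold $M_i$: one must be careful that the base points used in \eqref{vol-comp} are admissible, and that the exponential factor $e^{c(n,p,H)(\max\{R,1\}-r)\delta^{1/2}}$ is bounded solely in terms of $n,p,H,R$ once $\delta\leq c(n,p,H)$ is fixed. Once this uniform doubling-type estimate is in hand, Gromov's precompactness theorem immediately gives a Gromov--Hausdorff convergent subsequence of $(M_i,x_i)$.
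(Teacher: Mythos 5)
Your argument is correct and is exactly the route the paper (following Petersen--Wei) intends: the relative volume comparison \eqref{vol-comp}, valid uniformly once $\bar k_i(H,p,1)$ is below the threshold $\delta_0(n,p,H)$, gives a uniform doubling/packing bound for maximal $\epsilon$-separated sets in each $B_R(x_i)$, and Gromov's criterion then yields pointed Gromov--Hausdorff precompactness. The only cosmetic slip is the mixing of centers $y_j$, $y_k$ in your displayed volume ratio; it is harmless, since one only needs the ratio at the point of minimal small-ball volume after enlarging the outer radius so that both balls are concentric.
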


In \cite{DWZ},  a local Sobolev constant estimate is obtained for integral Ricci curvature. By this Sobolev constant estimate, the following maximal principle, gradient estimate and the mean value inequality hold.

\begin{Thm}[\cite{DWZ} Maximal principle] \label{max-pri}
Given $n>0, p, q> \frac{n}{2}, H$ and $R>0$, there exist $\delta =\delta(n, H, p, q, R)>0$ and $c(n, H, p, q, R)>0$ such that if a complete $n$-manifold $M$ satisfies $\bar k(H, p, R)\leq \delta$, then for $r\leq R$ and for any function $u: \Omega\to \Bbb R$ with $\Delta u\geq f$, 
$$\sup_{\Omega}u\leq \sup_{\partial \Omega}u+ c(n, H, p, q,R) \left(-\kern-1em\int_{\Omega}\left|\max\{0, -f\}\right|^q\right)^{\frac1q},$$
where $\Omega\subset B_R(x)$ has smooth boundary and $\partial \Omega\cap \partial B_R(x)=\emptyset$.
\end{Thm}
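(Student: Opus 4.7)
The plan is De Giorgi-Nash-Moser iteration based on the local Sobolev inequality that is the main technical contribution of \cite{DWZ}. Under $\bar k(H,p,R)\leq \delta$ that paper produces a constant $C_S=C_S(n,H,p,R)$ such that every $\phi\in W^{1,2}_0(\Omega')$ compactly supported in $\Omega'\subset B_R(x)$ satisfies $\|\phi\|_{L^{2n/(n-2)}(\Omega')}\leq C_S\|\nabla\phi\|_{L^2(\Omega')}$. This replaces the role played by the classical Euclidean (or $\op{Ric}\geq(n-1)H$) Sobolev inequality in the usual weak maximum principle; once it is in hand, the argument is standard Stampacchia-type iteration.

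First I would reduce to the Dirichlet case by setting $M_0=\sup_{\partial\Omega}u$ and $v=(u-M_0)_+$, extended by zero outside $\Omega$. Since $\partial\Omega\cap\partial B_R(x)=\emptyset$, one has $v\in W^{1,2}_0(\Omega)$, and $\Delta v\geq -f^-$ weakly on $\Omega$, where $f^-=\max\{0,-f\}$. For $\beta\geq 2$ I would test this inequality against $v^{\beta-1}$ (first truncated to $\min\{v,k\}^{\beta-1}$ and then letting $k\to\infty$) and integrate by parts using the zero boundary values of $v$ to obtain
$$\frac{4(\beta-1)}{\beta^2}\int_\Omega\left|\nabla(v^{\beta/2})\right|^2\leq \int_\Omega f^- v^{\beta-1}.$$
Applying the Sobolev inequality to $w=v^{\beta/2}$ on the left and Hölder with conjugate exponents $q$ and $q/(q-1)$ on the right yields
$$\|v\|_{L^{\beta n/(n-2)}(\Omega)}^{\beta}\leq C_S^2\,\frac{\beta^2}{4(\beta-1)}\,\|f^-\|_{L^q(\Omega)}\,\|v\|_{L^{(\beta-1)q/(q-1)}(\Omega)}^{\beta-1}.$$

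Because $q>n/2$ one checks that $\tfrac{n}{n-2}>\tfrac{q}{q-1}$, so each iteration step genuinely gains integrability. Choosing $\beta_k$ recursively by $(\beta_{k+1}-1)\tfrac{q}{q-1}=\beta_k\tfrac{n}{n-2}$ gives $\beta_k$ growing geometrically to $\infty$ and closes the Moser scheme in the usual way, producing $\sup_\Omega v\leq c(n,H,p,q,R)\|f^-\|_{L^q(\Omega)}$; converting $\|f^-\|_{L^q}$ to $\left(-\kern-1em\int_\Omega|f^-|^q\right)^{1/q}$ costs only a factor $\volume(\Omega)^{1/q}\leq \volume(B_R(x))^{1/q}$, which is bounded by a function of $(n,H,R)$ via \eqref{vol-com}. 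The main obstacle, and the reason the statement is nontrivial for integral Ricci curvature in the collapsing regime, is producing the local Sobolev inequality with constant depending only on $(n,H,p,R)$ and not on the injectivity radius or a volume noncollapsing bound; this is exactly what \cite{DWZ} supplies, so here it can simply be quoted. The secondary technical point of justifying the test function $v^{\beta-1}$ when $\Delta u\geq f$ is interpreted only weakly is routine truncation and limiting.
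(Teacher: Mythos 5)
The paper itself gives no proof of this statement: it is quoted verbatim from \cite{DWZ}, where it is derived from that paper's local Sobolev constant estimate by a standard iteration, which is exactly the scheme you propose. So your overall plan matches the source. However, there is one step that, as written, fails, and it fails precisely in the collapsing regime that makes the statement nontrivial: you quote the Sobolev inequality in the unnormalized form $\|\phi\|_{L^{2n/(n-2)}}\leq C_S(n,H,p,R)\|\nabla\phi\|_{L^2}$ with $C_S$ independent of the volume. No such uniform inequality can hold under $\bar k(H,p,R)\leq\delta$ alone: on the flat manifold $T^{n-1}_\epsilon\times\Bbb R$ (so $\bar k\equiv 0$), plugging in a fixed cutoff $\phi$ equal to $1$ on $B_{R/2}(x)$ and supported in $B_R(x)$ forces $C_S\gtrsim \volume(B_R(x))^{-1/n}$, which blows up as $\epsilon\to 0$. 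What \cite{DWZ} actually proves is the volume-normalized estimate, schematically $\left(-\kern-1em\int_{B_r}|\phi|^{\frac{2n}{n-2}}\right)^{\frac{n-2}{n}}\leq C(n,p,H,R)\,r^2\,-\kern-1em\int_{B_r}|\nabla\phi|^2$ for $\phi\in C_0^\infty(B_r(x))$, $r\leq R$; equivalently the Sobolev constant is controlled only after dividing by $\volume(B_r(x))^{1/n}$.

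The repair is routine and in fact simplifies your last step: run the identical testing-and-iteration argument with respect to the normalized measure $dv/\volume(B_R(x))$ (the integration by parts and the H\"older step with exponents $q,\,q/(q-1)$, valid since $q>n/2$ gives $\tfrac{n}{n-2}>\tfrac{q}{q-1}$, are unaffected), and the iteration then outputs $\sup_\Omega v\leq c(n,H,p,q,R)\left(-\kern-1em\int_{\Omega}|f^-|^q\right)^{1/q}$ directly, with no need for the final conversion via $\volume(\Omega)^{1/q}$ and \eqref{vol-com}. Two smaller points you should make explicit when writing this up: the homogeneity mismatch ($\beta$ on the left versus $\beta-1$ on the right) in your iteration inequality is handled in the standard way, e.g.\ by replacing $v$ with $v+\left(-\kern-1em\int_\Omega|f^-|^q\right)^{1/q}$ or by a Stampacchia level-set argument; and the case $f^-\equiv 0$ needs the usual separate (trivial) treatment. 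With the normalized Sobolev inequality quoted correctly, your argument is the same as the one the paper relies on via \cite{DWZ}.
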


\begin{Thm}[\cite{DWZ} Gradient estimate] \label{gra-est}
Let the assumption be as in Theorem~\ref{max-pri} and let $u: B_R(x)\to \Bbb R$ satisfying that $\Delta u=f$. Then
$$\sup_{B_{\frac{R}{2}}(x)}|\nabla u|^2\leq c(n, H, p, R)\left(-\kern-1em\int_{B_R(x)}u^2 + \left(-\kern-1em\int_{B_R(x)}f^{2p}\right)^{\frac1p}\right).$$
\end{Thm}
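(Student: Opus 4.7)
The plan is to combine Bochner's formula with a Nash--Moser iteration driven by the local Sobolev inequality of Dai--Wei--Zhang (which is the substance of \cite{DWZ} underlying Theorem~\ref{max-pri}), exactly as in the classical Cheng--Yau / Li--Yau gradient estimate, but now controlling the Ricci term through the $L^{p}$ hypothesis on $\rho_{H}$ rather than a pointwise Ricci lower bound.

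First, apply the Bochner formula to $w=|\nabla u|^{2}$ on $B_{R}(x)$:
$$\tfrac12\Delta w \,=\, |\Hess u|^{2}+\Ric(\nabla u,\nabla u)+\langle\nabla u,\nabla f\rangle,$$
and use $\Ric(\nabla u,\nabla u)\geq ((n-1)H-\rho_{H})\,w$. Since the paper assumes $H\leq 0$, the $(n-1)H$ contribution is absorbed into constants depending on $H,R$; the genuinely new bad term is $-\rho_{H}w$. The cross term $\langle\nabla u,\nabla f\rangle$ is handled by integration by parts so that only $f$ and $f^{2}$ (not $\nabla f$) ever enter the final estimate, matching the $f^{2p}$-norm appearing on the right-hand side of the conclusion. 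Combined with the Kato-type refinement $|\Hess u|^{2}\geq|\nabla|\nabla u||^{2}$, this yields a useful pointwise differential inequality for $w$.

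Second, multiply this inequality by $\eta^{2}w^{\beta}$, where $\eta$ is a smooth cutoff supported in a ball slightly larger than the one on which we want a bound, and integrate by parts. Standard manipulation gives a Caccioppoli-type inequality of the form
$$\int\eta^{2}\bigl|\nabla w^{(\beta+1)/2}\bigr|^{2} \,\leq\, C(\beta)\int(|\nabla\eta|^{2}+\eta^{2})\,w^{\beta+1} + C(\beta)\int\eta^{2}f^{2}w^{\beta} + C(\beta)\int\eta^{2}\rho_{H}\,w^{\beta+1}.$$
The third term is the only novelty relative to the classical Ricci-lower-bound case; I would bound it by H\"older's inequality with exponents $p$ and $p/(p-1)$, producing a factor $\bar k(H,p,R)\leq\delta$ times an $L^{(\beta+1)p/(p-1)}$-norm of $w$ to be reabsorbed. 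Applying the Dai--Wei--Zhang local Sobolev inequality to $\eta w^{(\beta+1)/2}$ upgrades the Dirichlet term on the left to an $L^{2n/(n-2)}$-norm and, combined with the Caccioppoli bound, yields a self-improving reverse-H\"older inequality. Iterating on shrinking radii $R_{j}\downarrow R/2$ with geometrically growing exponents $\beta_{j}\to\infty$, Moser's telescoping produces
$$\sup_{B_{R/2}(x)}|\nabla u|^{2} \,\leq\, C\left(-\kern-1em\int_{B_{3R/4}(x)}|\nabla u|^{2} \,+\, \left(-\kern-1em\int_{B_{3R/4}(x)}f^{2p}\right)^{1/p}\right).$$

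Third, to replace $|\nabla u|^{2}$ by $u^{2}$ on the right, test $\Delta u=f$ against $\eta^{2}u$ and integrate by parts. This yields the standard Caccioppoli inequality for $u$ itself,
$$\int_{B_{3R/4}(x)}|\nabla u|^{2} \,\leq\, C\int_{B_{R}(x)}(u^{2}+f^{2}) \,\leq\, C\int_{B_{R}(x)}u^{2} \,+\, C\,\vol(B_{R}(x))\left(-\kern-1em\int_{B_{R}(x)}f^{2p}\right)^{1/p},$$
using H\"older on the $f^{2}$ term with exponents $p$ and $p/(p-1)$. Substituting into the previous Moser bound and dividing through by $\vol(B_{R}(x))$ gives the statement, all constants absorbed into a single $c(n,H,p,R)$. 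The main obstacle is the new term $\int\eta^{2}\rho_{H}w^{\beta+1}$ appearing at every step of the Moser iteration: one must verify that the H\"older/Sobolev interpolation costs only a fixed positive power of $\bar k(H,p,R)^{1/2}\leq\delta^{1/2}$, so that for $\delta$ small (depending on $n,p,H,R$) this bad term is absorbed into the Dirichlet term uniformly in $\beta$, without the Moser constant blowing up when telescoped. This is precisely where $p>n/2$ enters critically, making $\rho_{H}$ behave like a subcritical (Kato-class) potential against the Sobolev exponent $2n/(n-2)$.
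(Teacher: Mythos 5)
The paper itself gives no proof of this statement---it is quoted verbatim from Dai--Wei--Zhang \cite{DWZ} as a preliminary---and your outline (Bochner formula, integration by parts on the $\langle\nabla u,\nabla f\rangle$ term, Caccioppoli plus Moser iteration driven by the local Sobolev constant, with the Ricci term $\rho_H\,|\nabla u|^2$ absorbed via H\"older with exponents $p$ and $p/(p-1)$ and the interpolation made subcritical by $p>n/2$) is precisely the argument of the cited source. The proposal is correct and takes essentially the same approach.
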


\begin{Thm}[\cite{DWZ} Mean value inequality] \label{mea-val}
Given $n>0, p>\frac{n}{2}, H$, there exist $\delta(n, p, H)>0$, such that if a complete $n$-manifold satisfies $\bar k(H, p, 1)\leq \delta(n, p, H)$, then for each nonnegative function $u$ in $M\times [0, r^2]$, $r<1$ with 
$$\frac{\partial}{\partial t} u\geq \Delta u-f,$$
where $f$ is a nonnegative function, then for any $q>\frac{n}2$,
$$-\kern-1em\int_{B_{\frac{r}2}(x)} u \leq C(n, p, q, H)\left(u(x, r^2)+r^2\sup_{0\leq t\leq r^2}\left(-\kern-1em\int_{B_r(x)}f^q\right)^{1/q}\right).$$
\end{Thm}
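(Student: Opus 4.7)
The plan is to prove Theorem~\ref{mea-val} via a Duhamel formula for the supersolution inequality $\partial_{t} u \ge \Delta u - f$, combined with two-sided heat kernel bounds that follow from the local Sobolev inequality of \cite{DWZ}. After standard smoothing one may assume $u$ is smooth and bounded; letting $H_{s}(x, y)$ denote the heat kernel on $M$, I would differentiate $s \mapsto \int_{M} u(\cdot, s)\,H_{r^{2}-s}(x, \cdot)\,dy$ and use $\partial_{s} u - \Delta u \ge -f$ together with the boundary behavior $H_{s} \to \delta_{x}$ as $s \to r^{2}$ to obtain the Duhamel-type inequality
\[
u(x, r^{2}) \;\ge\; \int_{M} H_{r^{2}}(x, y)\,u(y, 0)\,dy \;-\; \int_{0}^{r^{2}}\!\!\int_{M} H_{r^{2}-s}(x, y)\,f(y, s)\,dy\,ds.
\]

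Next, I would combine the local Sobolev inequality of \cite{DWZ} with the volume doubling consequence of Theorem~\ref{com-int}(2.1.2), both valid under $\bar k(H, p, 1) \le \delta(n, p, H)$, to derive via the standard Nash--Moser machinery the two-sided Gaussian-type heat kernel bounds
\[
\frac{c_{1}}{\vol(B_{\sqrt{t}}(x))}\,e^{-d(x, y)^{2}/c_{2} t} \;\le\; H_{t}(x, y) \;\le\; \frac{C_{1}}{\vol(B_{\sqrt{t}}(x))}\,e^{-d(x, y)^{2}/C_{2} t}
\]
for $t < 1$. In particular $H_{r^{2}}(x, y) \ge c\,\vol(B_{r}(x))^{-1}$ for $y \in B_{r/2}(x)$, which together with doubling $\vol(B_{r}(x)) \le C\,\vol(B_{r/2}(x))$ turns the first right-hand term of the Duhamel inequality into $c\,-\kern-1em\int_{B_{r/2}(x)} u(\cdot, 0)$. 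For the $f$-term, H\"older with exponents $q$ and $q' = q/(q-1)$ combined with the Gaussian upper bound gives
\[
\int_{M} H_{r^{2}-s}(x, y)\,f(y, s)\,dy \;\le\; C\bigl(r/\sqrt{r^{2}-s}\bigr)^{n/q}\Bigl(-\kern-1em\int_{B_{r}(x)} f^{q}\Bigr)^{1/q},
\]
after localizing to $B_{r}(x)$ using the Gaussian tail of $H$; the remaining time integral
\[
\int_{0}^{r^{2}} \bigl(r/\sqrt{r^{2}-s}\bigr)^{n/q}\,ds \;\le\; C\,r^{2}
\]
is finite precisely because $q > n/2$. Combining these estimates and rearranging yields the claim.

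The main obstacle is establishing the two-sided Gaussian heat kernel bounds in the integral Ricci setting with constants depending only on $n, p, q, H$ and not on the underlying manifold. While Nash--Moser theory delivers them once Sobolev and doubling are known, in this setting both constants depend on $\bar k(H, p, 1) \le \delta$, so one must verify that the Nash--Moser derivation passes through with these slightly imperfect inputs and that the threshold $\delta(n, p, H)$ can be chosen small enough to keep the constants uniform over all relevant scales $r < 1$; the critical bookkeeping is that the exponent margin $q - n/2 > 0$ exactly compensates for the quadratic scaling factor $r^{2}$ that appears in front of the $f$-average in the conclusion.
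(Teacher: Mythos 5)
The paper offers no proof of Theorem~\ref{mea-val}: it is quoted verbatim from \cite{DWZ}, so there is no internal argument to compare with. On its own merits, your Duhamel skeleton has the right shape --- kernel lower bound at scale $r$ for the $u$-term, H\"older in space plus the time integral $\int_0^{r^2}(r/\sqrt{r^2-s})^{n/q}\,ds\le C r^2$ for the $f$-term, with $q>\frac n2$ exactly what makes that integral converge --- and this is indeed how such ``reverse'' mean value inequalities are normally obtained. But two steps are genuinely gapped as written.

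First, localization. The hypothesis controls $f$ only through $\sup_t$ of the $B_r(x)$-average of $f^q$; on $M\setminus B_r(x)$ the function $f$ is completely unconstrained, and in the global Duhamel inequality it enters with the unfavorable sign. The Gaussian tail of $H$ cannot absorb it, because there is no global norm of $f$ to pay the tail against. You must instead run the argument with the Dirichlet heat kernel of $B_r(x)$ (or against a cutoff test function), where the extra boundary term has a good sign precisely because $u\ge 0$ and the Dirichlet kernel vanishes on $\partial B_r(x)$. Second, and more seriously, two-sided Gaussian bounds do \emph{not} follow from ``Sobolev plus doubling via Nash--Moser'': that machinery gives only the upper bound, while the matching lower bound (equivalently the parabolic Harnack inequality, by Grigor'yan--Saloff-Coste) additionally requires a Poincar\'e-type inequality or a Li--Yau gradient estimate. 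In the collapsed integral-Ricci setting this is exactly the delicate ingredient --- the present paper exists because the segment inequality, the usual route to Poincar\'e, was previously unavailable there --- and what the literature actually invokes is the Li--Yau estimate under integral curvature of \cite{ZZ1, ZZ2} or the arguments built directly on the local Sobolev constant in \cite{DWZ}. Moreover, after localizing, what you need is a lower bound for the \emph{Dirichlet} kernel of $B_r(x)$ at time $r^2$ on $B_{r/2}(x)$, uniform once $\bar k(H,p,1)\le\delta(n,p,H)$, which is a further step beyond the global on-diagonal bound. Until that kernel lower bound is supplied from a legitimate source, the proposed proof is incomplete.
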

\begin{Cor}[\cite{DWZ}] \label{mea-cor}
Assume as above, then for each nonnegative function $u$ in $M$, $r<1$ with $\Delta u\leq f,$
$$-\kern-1em\int_{B_{\frac{r}2}(x)} u \leq C(n, p, q, H) \left(u(x)+r^2\left(-\kern-1em\int_{B_r(x)}\max\{0, f\}^q\right)^{1/q}\right).$$
\end{Cor}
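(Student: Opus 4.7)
The plan is to reduce this elliptic-type estimate to the parabolic mean value inequality of Theorem~\ref{mea-val} by viewing a time-independent function as a (degenerate) solution of a heat inequality. The hypothesis $\Delta u \le f$ does not immediately fit Theorem~\ref{mea-val}, whose source term is required to be nonnegative, but we can simply replace $f$ by its positive part. Setting $\tilde f := \max\{0, f\}$, we have $\tilde f \ge 0$ on $M$ and $\Delta u \le f \le \tilde f$.

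Next, extend $u$ and $\tilde f$ to functions on $M \times [0, r^2]$ by declaring them independent of $t$. Then
$$\tfrac{\partial}{\partial t} u \;=\; 0 \;\ge\; \Delta u - \tilde f,$$
so $u$ satisfies the hypothesis of Theorem~\ref{mea-val} with nonnegative source $\tilde f$. The remaining assumptions ($r < 1$, $u \ge 0$, $\bar k(H, p, 1) \le \delta(n, p, H)$) are inherited verbatim from the hypotheses of the corollary. Applying Theorem~\ref{mea-val} therefore yields
$$-\kern-1em\int_{B_{r/2}(x)} u \;\le\; C(n, p, q, H)\left( u(x, r^2) + r^2 \sup_{0 \le t \le r^2}\left(-\kern-1em\int_{B_r(x)} \tilde f^{\,q}\right)^{1/q}\right).$$

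Because both $u$ and $\tilde f$ are independent of $t$, the right-hand side collapses: $u(x, r^2) = u(x)$, and the supremum over $t \in [0, r^2]$ is simply $\bigl(-\kern-1em\int_{B_r(x)} \max\{0, f\}^q\bigr)^{1/q}$. This gives exactly the asserted bound
$$-\kern-1em\int_{B_{r/2}(x)} u \;\le\; C(n, p, q, H)\left(u(x) + r^2 \left(-\kern-1em\int_{B_r(x)} \max\{0, f\}^q\right)^{1/q}\right).$$
There is no real obstacle here: the corollary is a direct time-independent specialization of the parabolic mean value inequality, once one notices that dominating $f$ by $\max\{0, f\}$ produces a nonnegative source to which Theorem~\ref{mea-val} can be applied.
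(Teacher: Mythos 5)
Your proposal is correct and is exactly the intended derivation: the paper states this corollary as an immediate consequence of Theorem~\ref{mea-val} (citing \cite{DWZ}) obtained by viewing the time-independent $u$ as a solution of the heat inequality with the nonnegative source $\max\{0,f\}$, which is precisely what you do. The collapse of $u(x,r^2)$ to $u(x)$ and of the supremum in $t$ to the single spatial average is handled correctly, so nothing is missing.
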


By maximal principle Theorem~\ref{max-pri}, we have the Cheeger-Colding cut-off function for integral Ricci curvature.

\begin{Lem}[\cite{DWZ} Cut-off function]\label{cut-off1}
Given $n, R>0$, $p>\frac{n}{2}, H$, there exist $\delta=\delta(n, p, H, R), c(n, H, p, R)>0$ such that if $\bar k(H, p, R)\leq \delta$, then for any $x\in M$, there exists $\phi: M\to [0, 1]$, $\left.\phi\right|_{B_{\frac{R}{2}}(x)}=1$, $\phi\in C^{\infty}_0(B_R(x))$ and 
$$R^2|\Delta \phi|+ R|\nabla \phi|\leq c(n, H, p, R).$$
\end{Lem}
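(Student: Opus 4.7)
The plan is to follow the Cheeger--Colding cut-off construction \cite{CC1}, which in this setting reduces to a standard application of the gradient estimate from \cite{DWZ}. The idea is to produce a function $h$ on $M$ that equals $0$ on a neighbourhood of $B_{R/2}(x)$ and $1$ outside a slightly larger set still contained in $B_R(x)$, is harmonic on the annulus between these sets, and has bounded gradient there; then compose with a smooth one-variable cutoff to obtain $\phi$.

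Concretely, I would first solve the Dirichlet problem $\Delta h = 0$ on the annulus $U = B_{15R/16}(x) \setminus \overline{B_{9R/16}(x)}$ with $h = 0$ on $\partial B_{9R/16}(x)$ and $h = 1$ on $\partial B_{15R/16}(x)$, extending $h$ by $0$ on $B_{9R/16}(x)$ and by $1$ on $M \setminus B_{15R/16}(x)$ to obtain a continuous function on $M$ with $0 \le h \le 1$ (by the maximum principle on $U$). Applying the gradient estimate (Theorem~\ref{gra-est}) on interior balls $B_{R/16}(y) \subset U$ for $y$ in the middle sub-annulus $W := B_{7R/8}(x) \setminus \overline{B_{5R/8}(x)}$, which is permissible since $d(y,\partial U) \ge R/16$ there, and using $\|h\|_{L^\infty}\le 1$ together with $\Delta h = 0$, yields $|\nabla h| \le c(n,H,p,R)$ on $W$.

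Next, I would pick $\epsilon \in (0,1/2)$ small and a smooth cutoff $\eta : \mathbb{R} \to [0,1]$ with $\eta \equiv 1$ on $(-\infty,\epsilon]$, $\eta \equiv 0$ on $[1-\epsilon,+\infty)$, and $|\eta'|+|\eta''| \le c(\epsilon)$, and set $\phi = \eta(h)$. Continuity of $h$ together with the boundary values $h = 0$ on $\partial B_{9R/16}(x)$ and $h = 1$ on $\partial B_{15R/16}(x)$ supplies open neighbourhoods of $B_{9R/16}(x)$ on which $h < \epsilon$ (so $\phi \equiv 1$) and of $M \setminus B_{15R/16}(x)$ on which $h > 1-\epsilon$ (so $\phi \equiv 0$); this patches the three definitions of $\phi$ together smoothly, giving $\phi \in C^\infty_0(B_R(x))$ with $\phi \equiv 1$ on $B_{R/2}(x) \subset B_{9R/16}(x)$. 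The chain-rule identities
\[
|\nabla\phi| = |\eta'(h)|\,|\nabla h|, \qquad |\Delta\phi| = |\eta''(h)|\,|\nabla h|^2 + |\eta'(h)|\,|\Delta h|,
\]
together with $\Delta h = 0$ and the gradient bound on $W$ (while elsewhere $\eta'(h)$ and $\eta''(h)$ vanish), deliver the desired $R^2|\Delta\phi|+R|\nabla\phi| \le c(n,H,p,R)$.

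The main obstacle is securing a uniform gradient bound for $h$ on $W$, since Theorem~\ref{gra-est} is phrased for solutions on balls rather than annuli; I would handle this by covering $W$ by interior balls $B_{R/16}(y) \subset U$, for which the interior radius $R/16$ is uniform, so that the constants in Theorem~\ref{gra-est} depend only on $n,H,p,R$. A secondary technical point is the existence of $h$: if $\partial B_{9R/16}(x)$ or $\partial B_{15R/16}(x)$ fails to be smooth, the Dirichlet problem should be interpreted in the weak (variational or Perron) sense, which still yields interior smoothness and continuous boundary behaviour sufficient for the argument.
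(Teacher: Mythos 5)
There is a genuine gap in the uniform estimate. You prove $|\nabla h|\le c(n,H,p,R)$ only on the middle sub-annulus $W=B_{7R/8}(x)\setminus \overline{B_{5R/8}(x)}$, but the set where $\nabla\phi$ and $\Delta\phi$ can be nonzero is the transition region $\{\epsilon\le h\le 1-\epsilon\}$, and nothing in your argument places that region inside $W$. Continuity of $h$ up to the boundary only gives \emph{qualitative} neighbourhoods of the inner and outer boundaries on which $h<\epsilon$, resp.\ $h>1-\epsilon$; these neighbourhoods can be arbitrarily thin depending on the particular manifold, so the level set $\{h=1/2\}$ may come arbitrarily close to $\partial U$, where your interior gradient estimate (applied on balls of radius comparable to the distance to $\partial U$) degenerates. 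Consequently the bound $R^2|\Delta\phi|+R|\nabla\phi|\le c(n,H,p,R)$ is not established on $U\setminus W$, and the lemma is not proved. This is exactly the point the paper flags by saying the cut-off function follows ``by the maximal principle Theorem~\ref{max-pri}'': in the Cheeger--Colding/Dai--Wei--Zhang construction one does not take $h$ harmonic with the trivial bound $0\le h\le 1$, but solves a Poisson problem modeled on an explicit radial function and uses the Laplacian comparison \eqref{lap-com} together with the maximal principle (whose error term is an $L^q$ average controlled by $\bar k(H,p,R)$) to get a quantitative $C^0$ comparison $|h-\underline h(r)|\le\Psi$ with a radial barrier. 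That $C^0$ estimate is what locates the intermediate level sets in a definite sub-annulus, after which the gradient estimate and the composition with $\eta$ go through as you describe.

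To repair your argument you would need to insert precisely this barrier step: compare your $h$ (or better, the solution of $\Delta h=$ the model radial Laplacian with radial boundary data) with the corresponding radial function of $d(x,\cdot)$, using Theorem~\ref{max-pri} and \eqref{lap-com} to absorb the excess $\psi$ in the $L^{2p}$ sense, so that $\{\epsilon\le h\le 1-\epsilon\}$ is forced into a fixed sub-annulus depending only on $n,p,H,R$. A minor additional point: when applying Theorem~\ref{gra-est} on balls of radius $R/16$ you should note that $\bar k(H,p,R/16)$ is controlled by $\bar k(H,p,R)$ only up to a volume-ratio factor (as in Lemma~\ref{pack}), which is harmless but should be said; also, since you eventually need $|\Delta\phi|$ bounded, the choice $\Delta h=0$ is fine only once the gradient bound holds on the whole transition region, which again returns to the missing $C^0$ step.
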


\begin{Cor}[\cite{DWZ}]\label{cut-off2}
Given $n, R>0$, $p>\frac{n}{2}, H$, there exist $\delta=\delta(n, p, H, R), c(n, H, p, R)>0$ such that if $\bar k(H, p, R)\leq \delta$, then for any $x\in M$, $0<10r_1<r_2<R$, there exists $\phi: M\to [0, 1]$, $\left.\phi\right|_{A_{3r_1, \frac{r_2}{3}}(x)}=1$, $\phi\in C^{\infty}_0(A_{2r_1, \frac{r_2}{2}}(x))$ and 
$$r_1^2|\Delta \phi|+ r_1|\nabla \phi|\leq c(n, H, p, R), \text{ in }A_{2r_1, 3r_1}(x),$$
$$r_2^2|\Delta \phi|+ r_2|\nabla \phi|\leq c(n, H, p, R), \text{ in }A_{\frac{r_2}{3}, \frac{r_2}{2}}(x).$$
\end{Cor}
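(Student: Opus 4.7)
The plan is to construct $\phi$ as the product of two Cheeger--Colding-type cut-offs supplied by Lemma~\ref{cut-off1}, one localizing inward near $x$ and one localizing outward toward $\partial B_{r_2/2}(x)$. Since $10r_1<r_2$, the two transition annuli $A_{2r_1,3r_1}(x)$ and $A_{r_2/3,r_2/2}(x)$ are disjoint and well-separated, so the inner and outer cut-offs live in non-overlapping regions and interact only trivially.

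More precisely, I first produce $\phi_-\in C^\infty_0(B_{3r_1}(x))$ with $\phi_-\equiv 1$ on $B_{2r_1}(x)$ and $r_1^2|\Delta\phi_-|+r_1|\nabla\phi_-|\le c(n,H,p,R)$, and symmetrically $\phi_+\in C^\infty_0(B_{r_2/2}(x))$ with $\phi_+\equiv 1$ on $B_{r_2/3}(x)$ and $r_2^2|\Delta\phi_+|+r_2|\nabla\phi_+|\le c(n,H,p,R)$. Both are provided by Lemma~\ref{cut-off1} after adjusting its inner-to-outer radius ratio from the stated $1:2$ to $2:3$; this costs only a change in the numerical constant, and is achieved by re-running the underlying maximum-principle construction (which rests on Theorem~\ref{max-pri}) with the shifted inner radius, or equivalently by post-composing the auxiliary Green-type distance function with a fixed smooth bump on $[0,1]$. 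I then set
$$\phi:=(1-\phi_-)\,\phi_+,$$
which takes values in $[0,1]$, vanishes on $B_{2r_1}(x)\cup(M\setminus B_{r_2/2}(x))$, and equals $1$ on $A_{3r_1,r_2/3}(x)$; hence $\phi\in C^\infty_0(A_{2r_1,r_2/2}(x))$ as required.

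The two derivative bounds then follow immediately from the Leibniz rule. On $A_{2r_1,3r_1}(x)$ we have $\phi_+\equiv 1$ (using $3r_1<r_2/3$), so $\phi=1-\phi_-$, $\nabla\phi=-\nabla\phi_-$, $\Delta\phi=-\Delta\phi_-$, and the first claimed bound is inherited from the estimates on $\phi_-$. Symmetrically, on $A_{r_2/3,r_2/2}(x)$ we have $\phi_-\equiv 0$, so $\phi=\phi_+$, $\nabla\phi=\nabla\phi_+$, $\Delta\phi=\Delta\phi_+$, yielding the second bound from the estimates on $\phi_+$. The only non-routine point is the ratio-adjustment carried out in constructing $\phi_\pm$; everything else is just the product rule applied in the two separated transition annuli.
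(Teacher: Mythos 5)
Your construction is correct and is essentially the standard one behind the cited result: the paper itself gives no proof of Corollary~\ref{cut-off2} (it is quoted from \cite{DWZ}), and there, as in Cheeger--Colding, the annular cut-off is obtained exactly by combining an inner and an outer ball cut-off, so your $\phi=(1-\phi_-)\phi_+$ with the two transition annuli separated by $10r_1<r_2$ is the intended argument, and the ratio $1:2$ versus $2:3$ adjustment is indeed only a change of constants in the maximum-principle construction. The one point you should make explicit is that Lemma~\ref{cut-off1} is being invoked at the scales $3r_1$ and $r_2/2$ rather than at $R$, while the hypothesis you are given is $\bar k(H,p,R)\leq\delta(n,p,H,R)$ and the conclusion of the corollary requires a constant depending only on $(n,H,p,R)$, uniformly in $r_1,r_2$: this needs the observation (Lemma~\ref{pack}, or a rescaling to $\rho^{-1}M$ together with \eqref{small}) that the scale-invariant quantity $\rho^2\bar k(H,p,\rho)$ only improves as $\rho\to0$ because $p>\frac{n}{2}$, so the smallness threshold is met at the scales $3r_1$ and $r_2/2$ and the scaled bounds $r_1^2|\Delta\phi_-|+r_1|\nabla\phi_-|$, $r_2^2|\Delta\phi_+|+r_2|\nabla\phi_+|$ carry constants independent of $r_1,r_2$. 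A last cosmetic remark: if you want the support of $\phi$ to stay strictly inside the open annulus $A_{2r_1,\frac{r_2}{2}}(x)$, take $\phi_-\equiv1$ on a slightly larger ball (say $B_{\frac52 r_1}(x)$) so that $1-\phi_-$ vanishes on a neighborhood of $\overline{B_{2r_1}(x)}$; this changes nothing else in your argument.
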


\section{Segment inequality and Almost rigidity structure for integral Ricci curvature}

In this section, we will give the proof of Theorem~\ref{seg-ine}. And then Theorem~\ref{alm-rig} will follow by using Theorem~\ref{seg-ine}, Theorem~\ref{max-pri}, Theorem~\ref{gra-est} and Lemma~\ref{cut-off1} and a similar argument as in \cite{CC1} (see \cite{PW2, TZ} for the non-collapsing case).

\subsection{Segment inequality}
\begin{proof}[Proof of Theorem 1.1]

For each $y\in A_1$, let 
$$\Omega_y=\{(t,\theta)\in T_yM, \, \exp_y(t\theta)\in A_2\setminus\op{Cut}_y, d(y, \exp_y(t\theta))=t\}.$$
\begin{eqnarray*}
& & \int_{A_2}\left|\mathcal{F}_u(y, z)\right|d z \\
&=& \int_{A_2\setminus\op{Cut}_y} \left|\mathcal{F}_u(y, z)\right|d z\\
&=& \int_{(t, \theta)\in \Omega_y}\left|\int_0^{t}u(\exp_y(s\theta))ds\right|\mathcal{A}(t, \theta)dt d\theta\\
&\leq & \int_{(t, \theta)\in \Omega_y}\left|\int_0^{\frac{t}2}u(\exp_y(s\theta))ds\right|\mathcal{A}(t, \theta)dt d\theta+\int_{(t, \theta)\in \Omega_y}\left|\int_{\frac{t}2}^{t}u(\exp_y(s\theta))ds\right|\mathcal{A}(t, \theta)dt d\theta.\\
\end{eqnarray*}
Let 
$$\mathcal{F}_1(y, z)=\left|\int_0^{\frac{t}{2}}u(\exp_y(s\theta))ds\right|, \quad \mathcal{F}_2(y, z)=\left|\int_{\frac{t}2}^{t}u(\exp_y(s\theta))ds\right|.$$
Since 
$$\frac{d}{dr}\frac{\mathcal A(r, \theta)}{\underline{\mathcal{A}}_H(r)}\leq \psi(r, \theta)\frac{\mathcal{A}(r, \theta)}{\underline{\mathcal{A}}_H(r)},$$
for $t\leq r$,
\begin{equation}\frac{\mathcal A(r, \theta)}{\underline{\mathcal{A}}_H(r)}-\frac{\mathcal A(t, \theta)}{\underline{\mathcal{A}}_H(t)}\leq \int_t^r\psi(\tau, \theta)\frac{\mathcal{A}(\tau, \theta)}{\underline{\mathcal{A}}_H(\tau)}d\tau. \label{vol-element}\end{equation}

Then for any fixed $(t, \theta)\in \Omega_y$ and for each $s_0\in [\frac{t}2, t]$,
\begin{eqnarray*}
& &\left|\int_{\frac{t}2}^{t}u(\exp_y(s\theta))ds\right|\mathcal A(t, \theta)\\
&\leq & \left|\int_{\frac{t}{2}}^{t}u(\exp_y(s\theta))ds\right|\left(\mathcal{A}(s_0, \theta)\frac{\underline{\mathcal A}_H(t)}{\underline{\mathcal A}_H(s_0)}+\underline{\mathcal A}_H(t)\int_{s_0}^t\psi\frac{\mathcal A(\tau, \theta)}{\underline{\mathcal A}_H(\tau)}d\tau\right)\\
&\leq & \left|\int_{\frac{t}{2}}^{t}u(\exp_y(s\theta))ds \right|\frac{\underline{\mathcal A}_H(t)}{\underline{\mathcal A}_H(s_0)}\left(\mathcal{A}(s_0, \theta)+\int_{s_0}^t\psi\mathcal A(\tau, \theta)d\tau\right)\\
&\leq & \sup_{\frac{t}{2}\leq s\leq t\leq 2R}\frac{\underline{\mathcal{A}}_{H}(t)}{\underline{\mathcal{A}}_{H}(s)}\left(\left|\int_{\frac{t}2}^tu(\exp_y(s\theta))ds\right|\mathcal{A}(s_0, \theta) +\left|\int_{\frac{t}{2}}^{t}u(\exp_y(s\theta))ds\right|\int_{s_0}^t\psi\mathcal A(\tau, \theta)d\tau \right)\\
&\leq & c(n, H, R)\left|\int_{\frac{t}2}^tu(\exp_y(s\theta))ds\right|\mathcal{A}(s_0, \theta) +c(n, H, R)C_0\int_{\frac{t}2}^t\psi\mathcal A(\tau, \theta)d\tau\\
&\leq & c(n, H, R)\int_{\frac{t}2}^t\left|u(\exp_y(s\theta))\right|ds\mathcal{A}(s_0, \theta) +c(n, H, R)C_0\int_{\frac{t}2}^t\psi\mathcal A(\tau, \theta)d\tau,
\end{eqnarray*}
where $$c(n, H, R)=\sup_{0<\frac{t}{2}\leq s\leq t\leq 2R}\frac{\underline{\mathcal{A}}_{H}(t)}{\underline{\mathcal{A}}_{H}(s)}.$$
Thus
\begin{eqnarray*}
& & \left|\int_{\frac{t}2}^{t}u(\exp_y(s\theta))ds\right|\mathcal A(t, \theta)\\
&\leq&  c(n, H, R)\int_{\frac{t}2}^t\left|u(\exp_y(s\theta))\right|\inf_{\frac{t}{2}\leq s_0\leq t}\mathcal{A}(s_0, \theta)ds +c(n, H, R)C_0\int_{\frac{t}2}^t\psi\mathcal A(\tau, \theta)d\tau\\
&\leq & c(n, H, R)\int_{\frac{t}2}^t\left|u(\exp_y(s\theta))\right|\mathcal{A}(s, \theta)ds +c(n, H, R)C_0\int_{\frac{t}2}^t\psi\mathcal A(\tau, \theta)d\tau.
\end{eqnarray*}

And then
\begin{eqnarray}
& &\int_{A_2}\mathcal{F}_2(y, z) dz\\
&=& \int_{(t, \theta)\in \Omega_y}\left|\int_{\frac{t}{2}}^{t}u(\exp_y(s\theta))ds\right|\mathcal{A}(t, \theta)dt d\theta \nonumber\\
&\leq & c(n, H, R)\int_{(t, \theta)\in \Omega_y}\left(\int_{\frac{t}2}^t\left|u(\exp_y(s\theta))\right|\mathcal{A}(s, \theta)ds +C_0\int_{\frac{t}2}^t\psi\mathcal A(\tau, \theta)d\tau\right) dt d\theta \nonumber\\
&\leq & c(n, H, R)\int_{S^{n-1}}\int_0^{2r} \left(\int_{\frac{t}2}^t\left|u(\exp_y(s\theta))\right|\mathcal{A}(s, \theta)ds +C_0\int_{\frac{t}2}^t\psi\mathcal A(\tau, \theta)d\tau\right)dt d\theta, \nonumber\\
&\leq & c(n, H, R) \cdot 2r \left(\int_{B_{2r}(x)}|u| + C_0\int_{B_{2r}(x)}\psi \right), \nonumber
\end{eqnarray}
$$\int_{A_1\times A_2}\mathcal{F}_2(y, z) dv_y dv_z\leq 2c(n,H, R)r\volume(A_1)\left(\int_{B_{2r}(x)}|u| + C_0\int_{B_{2r}(x)}\psi\right).$$
Similarly, we have that 
$$\int_{A_1\times A_2}\mathcal{F}_1(y, z) dv_y dv_z\leq 2c(n, H, R)r\volume(A_2)\left(\int_{B_{2r}(x)}|u|+ C_0\int_{B_{2r}(x)}\psi\right).$$

Finally, using H\"older inequality and Laplacian comparison \eqref{lap-com}, we derive the result.
\end{proof}

If $u$ is non-negative and has $C^0$ bound, then Theorem 1.1 is just \cite[Proposition 2.29]{TZ}.
\begin{Cor} \label{seg-one}
If $\|u\|_{C^0(B_R(x))}\leq C_0$, $u\geq 0$, then
$$\int_{A_1\times A_2}\mathcal{F}_u(y, z) \leq 2c(n,H, R)r(\volume(A_1)+\volume(A_2))\left(\int_{B_{2r}(x)}u+rC_0\int_{B_{2r}(x)}\psi\right).$$ 
\end{Cor}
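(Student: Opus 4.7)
The plan is to revisit the proof of Theorem~\ref{seg-ine} and observe that the corollary is essentially the intermediate estimate reached midway through that proof, specialized to the new hypotheses. The only genuine modification is how the constant $C_0$ enters the two factor estimates, and the conclusion is obtained by stopping the argument \emph{before} the final application of H\"older's inequality and the Laplacian comparison.

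Concretely, in the proof of Theorem~\ref{seg-ine} the hypothesis $|\mathcal F_u(y,z)|\leq C_0$ was invoked to bound $\bigl|\int_{t/2}^{t} u(\exp_y(s\theta))\,ds\bigr|$ by $C_0$. Under the hypotheses of the corollary, namely $u\geq 0$ and $\|u\|_{C^0(B_R(x))}\leq C_0$, the same integral admits the trivial pointwise bound
$$\left|\int_{t/2}^{t} u(\exp_y(s\theta))\,ds\right|\leq C_0\cdot\frac{t}{2}\leq r\,C_0,$$
since $z=\exp_y(t\theta)\in A_2\subset B_r(x)$ together with $y\in A_1\subset B_r(x)$ forces $t=d(y,z)\leq 2r$. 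Thus throughout the argument the constant $C_0$ from Theorem~\ref{seg-ine} gets replaced by $rC_0$, while $u\geq 0$ lets me drop the absolute value on $u$.

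Carrying this substitution through the two chains of inequalities in the proof of Theorem~\ref{seg-ine}, I arrive at
$$\int_{A_1\times A_2}\mathcal F_2(y,z)\,dv_y\,dv_z\leq 2c(n,H,R)\,r\,\vol(A_1)\left(\int_{B_{2r}(x)}u+rC_0\int_{B_{2r}(x)}\psi\right),$$
and the analogous bound on $\int_{A_1\times A_2}\mathcal F_1$ with $\vol(A_1)$ replaced by $\vol(A_2)$. Adding these two produces the claimed inequality. Crucially, no H\"older inequality or Laplacian comparison \eqref{lap-com} is invoked, which is why the right-hand side involves $\int_{B_{2r}(x)}\psi$ rather than $\bar k^{1/2}(H,p,2r)$. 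The argument presents no real obstacle: it is a direct, strictly simpler specialization of the proof of Theorem~\ref{seg-ine}, the only point worth tracking being the length-of-segment factor that promotes $C_0$ to $rC_0$.
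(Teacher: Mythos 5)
Your proposal is correct and follows exactly the route the paper intends: the corollary is the intermediate estimate in the proof of Theorem~\ref{seg-ine} (stated there with $\int_{B_{2r}(x)}\psi$, before H\"older and the Laplacian comparison \eqref{lap-com} are applied), with $u\geq 0$ removing the absolute values and the pointwise bound $\bigl|\int_{t/2}^{t}u\bigr|\leq C_0\,t/2\leq rC_0$ (valid since $t=d(y,z)\leq 2r$) replacing the role of the hypothesis $|\mathcal F_u|\leq C_0$. Nothing further is needed.
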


Comparing with \cite[Proposition 2.30]{TZ}, we have that
\begin{Cor}\label{seg-two}
 Assume $u\in C^{\infty}(B_{R}(x))$ satisfies that  $|\nabla u|\leq \Lambda$. Then for any unit speed geodesic $\gamma$ from $y\in A_1$ to $z\in A_2$, the following holds:
$$\int_{A_1\times A_2}|\left<\nabla u, \gamma'\right>(y) - \left<\nabla u, \gamma'\right>(z)|\leq 2c(n, H, R)r\left(\volume(A_1)+\volume(A_2)\right)\left(\int_{B_{2r}(x)}|\op{Hess}_u|+ 2\Lambda \int_{B_{2r}(x)}\psi\right).$$
\end{Cor}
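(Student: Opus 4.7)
The plan is to rerun the proof of Theorem~\ref{seg-ine} essentially verbatim, with the integrand $|u|$ replaced by $|\op{Hess}_u|$ and the constant $C_0$ replaced by $2\Lambda$. The only thing that needs checking is that the two different \emph{roles} $u$ plays in that proof can be filled consistently by these new ingredients.

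Fix $y\in A_1$, take $z=\exp_y(t\theta)\in A_2\setminus\op{Cut}_y$, and set $\gamma(s)=\exp_y(s\theta)$ for $s\in[0,t]$, the minimal unit-speed geodesic from $y$ to $z$ chosen by the exponential map at $y$ (the same parametrization used in the proof of Theorem~\ref{seg-ine}). The fundamental theorem of calculus along $\gamma$ gives
$$\langle\nabla u,\gamma'\rangle(z)-\langle\nabla u,\gamma'\rangle(y)=\int_0^t\op{Hess}_u(\gamma'(s),\gamma'(s))\,ds,$$
and on any subinterval $[a,b]\subset[0,t]$, the bound $|\nabla u|\le\Lambda$ forces
$$\left|\int_a^b\op{Hess}_u(\gamma',\gamma')\,ds\right|\le 2\Lambda,$$
while pointwise $|\op{Hess}_u(\gamma',\gamma')|\le|\op{Hess}_u|$.

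With these two inputs in hand I would repeat the main estimate of Theorem~\ref{seg-ine} applied to the integrand $\op{Hess}_u(\gamma',\gamma')$ in place of $u$. Splitting $\int_0^t=\int_0^{t/2}+\int_{t/2}^t$ and estimating the tail piece
$$\left|\int_{t/2}^t\op{Hess}_u(\gamma',\gamma')(\exp_y(s\theta))\,ds\right|\mathcal A(t,\theta)$$
via the volume-element comparison \eqref{vol-element} exactly as in the displayed chain of inequalities for $\mathcal F_2$, the pointwise domination by $|\op{Hess}_u|$ produces the $\int_{B_{2r}(x)}|\op{Hess}_u|$ term, while the integral bound $2\Lambda$ substitutes for $C_0$ and yields the $2\Lambda\int_{B_{2r}(x)}\psi$ term. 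Integrating over $\Omega_y$, then over $y\in A_1$, and treating the $[0,t/2]$ half symmetrically (contributing the $\volume(A_2)$ factor as before) assembles the claimed inequality.

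The main subtlety to articulate, and the essential "obstacle" here, is that in the proof of Theorem~\ref{seg-ine} the hypothesis $|\mathcal F_u|\le C_0$ is invoked only as an upper bound for $|\int_{t/2}^t u|$ multiplying the $\int\psi\mathcal A$ factor, not as a $C^0$ bound on the integrand itself. Because $\int_a^b\op{Hess}_u(\gamma',\gamma')\,ds$ telescopes to differences of $\langle\nabla u,\gamma'\rangle$ at the endpoints, the hypothesis $|\nabla u|\le\Lambda$ is exactly strong enough to fill this role, even though $|\op{Hess}_u|$ need not be uniformly bounded. Isolating this point is precisely what makes the corollary stronger than it would be under a $C^0$ hypothesis on $\op{Hess}_u$.
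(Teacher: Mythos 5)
Your proposal is correct and is essentially the paper's own argument: the paper likewise sets $f(\gamma(s))=\op{Hess}_u(\gamma',\gamma')(\gamma(s))$, observes via the fundamental theorem of calculus and $|\nabla u|\leq\Lambda$ that $|\mathcal F_f(y,z)|\leq 2\Lambda$, and then reruns the proof of Theorem~\ref{seg-ine} with $C_0=2\Lambda$ and the pointwise bound $|\op{Hess}_u(\gamma',\gamma')|\leq|\op{Hess}_u|$. Your additional remark that the $2\Lambda$ bound also holds on every subinterval (since the integral telescopes to endpoint differences of $\langle\nabla u,\gamma'\rangle$) is exactly the point that makes the substitution into the proof legitimate.
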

\begin{proof}
Let $f(\gamma(s))=\op{Hess}_u(\gamma', \gamma')(\gamma(s))$. Then by $|\mathcal F_f(y, z)|=|\left<\nabla u, \gamma'\right>(\gamma(0)) - \left<\nabla u, \gamma'\right>(\gamma(d(y,z)))|\leq 2\Lambda$,  the result is derived by the proof of Theorem~\ref{seg-ine}.
\end{proof}

Consider a sequence of complete $n$-manifolds $(M_i, x_i)$ with $\bar k_i(H, p, 1)\leq \delta_i\to 0$.  By the precompactness (Theorem~\ref{compact}) and a similar argument as in \cite{CC2}, passing to a subsequence, there is a metric measure space $(X, x, d, \nu)$, such that
$\left(M_i, x_i, g_i, \frac{\volume(\cdot)}{\volume(B_1(x_i))}\right)$ is measured Gromov-Hausdorff convergent to $(X, x, d, \nu)$, where $\nu$ is a Radon measure and satisfies that for $y\in X$, $0<r_1<r_2$,
$$\frac{\nu(B_{r_1}(y))}{\nu(B_{r_2}(y))}\geq \frac{\svolball{H}{r_1}}{\svolball{H}{r_2}}.$$
And $(X, d, \nu)$ is called a limit space. In $(X, d, \nu)$, a limit geodesic $\gamma: [0, l]\to X$ is a geodesic which is a limit of a sequence of geodesics $\gamma_i:[0, l_i]\to M_i$, i.e., $l_i\to l$ and $\gamma_i\to \gamma$. 
Then as in \cite{CC4}, we have the segment inequality in the limit spaces.

\begin{Cor} \label{seg-lim}
Let $(X, x, d, \nu)$ be as the above. Then for any geodesic ball $B_r(x)$, $r<R$, measurable subsets of $B_r(x)$, $A_1, A_2$ and a function $u\in L^q(X, \nu)$, $(q>1)$,  satisfying that $\left|\mathcal F_u(y, z)\right|$ is uniformly bounded for any points $(y, z)\in B_{2r}(x)\times B_{2r}(x)$,  the following holds
$$\int_{A_1\times A_2}\left|\mathcal F_u(y, z)\right|dydz \leq 2c(n,H, R)r\left(\nu(A_1)+\nu(A_2)\right)\int_{B_{2r}(x)}|u|,$$ 
where $\mathcal F_u(y, z)=\inf\left\{\int_{\gamma} u(\gamma(t)), \, \gamma \text{ is a limit geodesic connecting } y, z\right\}$.
\end{Cor}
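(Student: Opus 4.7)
The plan is to pass the manifold segment inequality of Theorem~\ref{seg-ine} to the limit by an approximation argument in the spirit of Cheeger--Colding \cite{CC4}. I would first reduce to bounded continuous test functions: given $u\in L^q(X,\nu)$ with $|\mathcal{F}_u|$ uniformly bounded by some $C_0$ on $B_{2r}(x)\times B_{2r}(x)$, approximate $u$ in $L^q(B_{2r}(x),\nu)$ by uniformly bounded Lipschitz functions $u^{\epsilon}$ on $X$, for which $|\mathcal{F}_{u^{\epsilon}}|$ is automatically bounded on $B_{2r}(x)^2$ by $4r\|u^{\epsilon}\|_{\infty}$. Using $\epsilon_i$-Gromov--Hausdorff approximations $\Phi_i: X\to M_i$, lift $u^{\epsilon}$ to continuous $u_i^{\epsilon}$ on $M_i$ with the same uniform $L^{\infty}$ bound; the measured GH convergence $\nu_i\to\nu$, where $\nu_i=\vol(\cdot)/\vol(B_1(x_i))$, can be arranged so that $\int_{B_{2r}(x_i)}|u_i^{\epsilon}|\,d\nu_i\to\int_{B_{2r}(x)}|u^{\epsilon}|\,d\nu$ and one may choose $A_{j,i}\subset B_r(x_i)$ with $\nu_i(A_{j,i})\to\nu(A_j)$.

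Applying Theorem~\ref{seg-ine} to $(M_i,u_i^{\epsilon},A_{1,i},A_{2,i})$, dividing both sides by $\vol(B_1(x_i))^2$ to pass to $\nu_i$, and using $\bar k_i^{1/2}(H,p,2r)\to 0$ to kill the error term, I obtain
$$\int_{A_{1,i}\times A_{2,i}}|\mathcal{F}_{u_i^{\epsilon}}|\,d\nu_i d\nu_i\leq 2c(n,H,R)r\bigl(\nu_i(A_{1,i})+\nu_i(A_{2,i})\bigr)\Bigl(\int_{B_{2r}(x_i)}|u_i^{\epsilon}|\,d\nu_i+o_i(1)\Bigr).$$
To pass to the GH limit: every limit geodesic $\gamma\subset X$ from $y$ to $z$ is by definition a uniform limit of minimal geodesics $\gamma_i\subset M_i$ from $\Phi_i(y)$ to $\Phi_i(z)$, so uniform continuity of $u^{\epsilon}$ gives $\int_{\gamma_i}u_i^{\epsilon}\to\int_{\gamma}u^{\epsilon}$. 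Combined with an Arzel\`a--Ascoli subsequence argument on near-minimizing $\gamma_i$'s, this yields $\mathcal{F}_{u_i^{\epsilon}}(\Phi_i(y),\Phi_i(z))\to\mathcal{F}_{u^{\epsilon}}(y,z)$ for $\nu\otimes\nu$-a.e.~$(y,z)$; bounded convergence together with $\nu_i\to\nu$ then delivers the segment inequality for $u^{\epsilon}$ on $X$.

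Finally, let $\epsilon\to 0$. The right-hand side converges to the claimed bound since $u^{\epsilon}\to u$ in $L^1(B_{2r}(x),\nu)$. For the left-hand side, the hypothesis $|\mathcal{F}_u|\leq C_0$ guarantees integrability, and a dominated-convergence argument applied to a.e.~pair $(y,z)$ admitting a limit geodesic along which $u^{\epsilon}\to u$ pointwise converges $\int_\gamma u^{\epsilon}\to\int_\gamma u$, giving $\mathcal{F}_{u^{\epsilon}}\to\mathcal{F}_u$ a.e.~by a corresponding $\liminf$/$\limsup$ analysis; Fatou's lemma then yields the desired bound.

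The principal obstacle is the convergence of line integrals along geodesic sequences in the Gromov--Hausdorff limit: concretely, verifying that near-minimizing geodesics in $M_i$ between endpoints converging in $X$ admit subsequential uniform limits that qualify as limit geodesics, and that line integrals of uniformly continuous test functions converge along them. Without this step the $\liminf$/$\limsup$ sandwich for $\mathcal{F}_{u^{\epsilon}}$ cannot be closed; it is handled by Arzel\`a--Ascoli for uniformly Lipschitz curves and uniform convergence of the lifts $u_i^{\epsilon}$. A secondary technicality is the $\epsilon\to 0$ transition for non-continuous $u$, where one must carefully relate the infimum-over-geodesics $\mathcal{F}_{u^{\epsilon}}$ to $\mathcal{F}_u$ on full-measure sets of pairs $(y,z)$; this is managed by the uniform bound $|\mathcal{F}_u|\leq C_0$ together with a Fubini-type decomposition along a.e.~limit geodesic.
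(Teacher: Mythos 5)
Your Gromov--Hausdorff limit step is essentially the paper's argument: apply Theorem~\ref{seg-ine} on $M_i$, renormalize by $\volume(B_1(x_i))$, use $\bar k_i(H,p,1)\leq\delta_i\to 0$ together with the volume bounds \eqref{vol-com}, \eqref{c-com} to kill the error term and keep $\nu_i(B_{2r}(x_i))$ bounded, and close the pointwise semicontinuity of $\mathcal F$ by Arzel\`a--Ascoli on near-minimizing geodesics; that matches the "straightforward limit argument as in \cite{CC4}" the paper invokes. The genuine gap is in your reduction to continuous test functions and the subsequent $\epsilon\to 0$ passage. Approximating $u$ by Lipschitz functions $u^{\epsilon}$ merely in $L^q(\nu)$ gives no control whatsoever on $\int_{\gamma}u^{\epsilon}$ versus $\int_{\gamma}u$: geodesics are $\nu$-null sets, so $L^q$ (or $\nu$-a.e.) convergence says nothing about values along them, and the hypothesis $|\mathcal F_u|\leq C_0$ bounds an infimum of line integrals but does not dominate $\int_{\gamma}|u^{\epsilon}-u|$. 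The "Fubini-type decomposition along a.e.\ limit geodesic" you invoke to rescue this is not available at this stage in this generality: there is no a priori disintegration of $\nu\times\nu$ over the family of limit geodesics (indeed, the segment inequality being proved is precisely the substitute for such a tool), so the claimed a.e.\ convergence $\mathcal F_{u^{\epsilon}}\to\mathcal F_u$ is unjustified and the $\liminf$/$\limsup$ sandwich cannot be closed.

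The paper avoids this by the Cheeger--Colding device from \cite[Theorem 2.6]{CC4}: write $u=u_+-u_-$ and choose \emph{nondecreasing} sequences of continuous functions $h_j\to u_+$ and $f_j\to u_-$, so that monotone convergence applies along every individual geodesic, giving $\mathcal F_{u_+}=\lim_j\mathcal F_{h_j}$, $\mathcal F_{u_-}=\lim_j\mathcal F_{f_j}$, hence $\mathcal F_{h_j-f_j}\to\mathcal F_u$ with $|\mathcal F_{h_j-f_j}|\leq 2C_0$ for large $j$; after this one may assume $u$ continuous and run the limit argument you describe. To repair your proposal, replace the generic Lipschitz $L^q$-approximation by this monotone (pointwise, geodesic-wise) approximation, or otherwise supply an argument that your approximants converge along geodesics; as written, the final step would fail.
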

\begin{proof} Assume $\left|\mathcal F_u(y, z)\right|\leq C_0$.
Write $u=u_+-u_{-}$, where $u_+=\max\{0, u\}$ and $u_{-}=\max\{0, -u\}$. Then as the discussion in \cite[Theorem 2.6]{CC4}, there are nondecreasing continuous functions: $h_j\to u_+$, $f_j\to u_{-}$ and 
$$\mathcal F_{u_+}=\lim_{j\to \infty}\mathcal F_{h_j}, \quad \mathcal F_{u_{-}}=\lim_{j\to \infty} \mathcal F_{f_j}.$$
Then continuous functions 
$$h_j-f_j\to u, \quad \mathcal F_u=\lim_{j\to \infty} \mathcal F_{h_j-f_j},$$
and for $j$ large, $\left|\mathcal F_{h_j-f_j}\right|\leq 2C_0$.
Now without loss of generality, we may assume $u$ is continuous. Then by using \eqref{vol-com}, \eqref{c-com} and a straightforward limit argument as in \cite{CC4}, we derive the result.
\end{proof}

\subsection{Almost rigidity results}
To prove the almost rigidity results Theorem~\ref{alm-rig}, first recall the following excess estimate.
\begin{Thm}[\cite{PW2} Excess estimate] \label{exc-est}
Given $n, 1\geq r>0, p>\frac{n}{2}$, there exist $\epsilon(n, p, r), \delta(n, p, r)>0$, such that for $\epsilon <\epsilon(n, p, r), \delta \leq \delta(n, p, r)$, if a complete $n$-manifold $M$ satisfies that for $q_{+}, q_{-}, x\in M$,
$$\bar k(0, p, 1)\leq \delta, \quad d(q_+, q_-)=L<\delta^{-\min\{\frac{p}{2n}, \frac12\}}, \quad d(q_+, x), d(q_-, x)\geq Lr, \quad e(x)\leq \epsilon,$$
where $e(x)=d(q_+, x)+d(q_-, x)-L$,
then for each $y\in B_{\frac{r}{2}}(x)$,
$$e(y)\leq \Psi(\epsilon, \delta, L^{-1} | n, p, r).$$
\end{Thm}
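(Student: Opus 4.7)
The plan is to adapt the Abresch-Gromoll excess barrier argument to the integral Ricci setting, as done by Petersen-Wei \cite{PW2}, combining the Laplacian comparison \eqref{lap-com} with the Dai-Wei-Zhang maximum principle (Theorem~\ref{max-pri}) to absorb the integral Ricci error terms. First I would set $b_\pm(z) = d(q_\pm, z)$ and $e(z) = b_+(z) + b_-(z) - L$, so that $e \geq 0$, $e$ is $2$-Lipschitz, and $e(x) \leq \epsilon$. By \eqref{lap-com} and the definition $\psi_\pm := \max\{\Delta b_\pm - \underline{\Delta}_0 b_\pm, 0\}$, in the barrier sense $\Delta b_\pm \leq \frac{n-1}{b_\pm} + \psi_\pm$, with $\psi_\pm$ controlled in $L^{2p}$ by the integral Ricci bound. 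Since $d(q_\pm, x) \geq Lr$, on $B_{Lr/2}(x)$ we have $b_\pm \geq Lr/2$, hence $\Delta e \leq c_1 + \psi_+ + \psi_-$ with $c_1 := \frac{4(n-1)}{Lr}$.

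Next I would introduce the classical Abresch-Gromoll radial barrier
$$G_R(t) = \frac{c_1}{2n}\, t^2 + \frac{c_1 R^n}{n(n-2)}\, t^{2-n} - \frac{c_1 R^2}{2(n-2)},$$
the solution of $G_R'' + \frac{n-1}{t} G_R' = c_1$ with $G_R(R) = G_R'(R) = 0$; it is positive, decreasing, convex on $(0, R]$, and blows up as $t \to 0$. Since $G_R' \leq 0$, Laplacian comparison at the base point $x$ (with its own error $\psi_x$) gives $\Delta G_R(d(x, \cdot)) \geq c_1 - |G_R'|\,\psi_x$ in the barrier sense, so that on the annulus $A_{s, R}(x)$,
$$\Delta(e - G_R) \leq \psi_+ + \psi_- + |G_R'(d(x, \cdot))|\,\psi_x =: F.$$
On $\partial B_R(x)$ one has $e \leq \epsilon + 2R$ while $G_R = 0$; for $s$ small, $G_R(s) \geq \epsilon + 2s \geq e|_{\partial B_s}$. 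A barrier-sense maximum-principle argument (via a Calabi-type support-function regularization to legitimize Theorem~\ref{max-pri} for the non-smooth function $e$) then yields
$$e(y) \leq G_R(d(x, y)) + \epsilon + 2R + C(n, p, q, H)\,\|F\|_{L^q(A_{s, R}(x))}.$$

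Finally I would bound the $L^q$ norms of $\psi_\pm, \psi_x$ using \eqref{lap-com} integrated in polar coordinates at $q_\pm$ and $x$ respectively, combined with the packing Lemma~\ref{pack}, to obtain e.g.
$$-\kern-1em\int_{B_\rho(x)} \psi_\pm^{2p}\, dv \lesssim \frac{\vol(B_{L+\rho}(q_\pm))}{\vol(B_\rho(x))}\, \delta^{p}.$$
By volume comparison \eqref{vol-comp} this ratio is at most $e^{cL\delta^{1/2}}(L/\rho)^n$, so the hypothesis $L < \delta^{-\min\{p/(2n),\, 1/2\}}$ is precisely the threshold at which the exponential factor $e^{cL\delta^{1/2}}$ stays bounded (via the $1/2$ exponent) and the combined factor $L^{n/(2p)}\delta^{1/2}$ stays small (via the $p/(2n)$ exponent). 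Balancing $s, R$ as appropriate powers of $d(x, y), L, \delta$ then gives $e(y) \leq \Psi(\epsilon, \delta, L^{-1}\,|\,n, p, r)$ for all $y \in B_{r/2}(x)$.

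The main obstacle is twofold. First, the mismatch between the barrier-sense Laplacian bound on $b_\pm$ and the pointwise hypothesis $\Delta u \geq f$ of Theorem~\ref{max-pri} requires a Calabi support-function regularization and a limiting argument to apply the max principle legitimately to $e - G_R$. Second, the sharpness of the $L$-threshold must be maintained throughout: the two cases $p/(2n)$ and $1/2$ in the minimum correspond respectively to controlling the integrated Ricci error $(L/\rho)^{n/(2p)}\delta^{1/2}$ and the volume-comparison exponential $e^{cL\delta^{1/2}}$, and getting the right power of $\delta$ in $\Psi$ requires tracking both simultaneously when optimizing $s$ and $R$.
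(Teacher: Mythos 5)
Your overall strategy (Abresch--Gromoll barrier plus the Dai--Wei--Zhang maximum principle) is a genuinely different route from the paper's, but the comparison step as you set it up does not work. Theorem~\ref{max-pri} bounds $\sup_\Omega u$ only under a \emph{lower} bound $\Delta u\ge f$; what you have is $\Delta(e-G_R)\le F$, and there is no lower Laplacian bound for the excess under integral Ricci, so the theorem can only be applied to $G_R-e$, i.e. it yields a minimum principle for $e-G_R$ (interior infimum bounded below by the boundary infimum minus the $L^q$ error), not the asserted upper bound $e(y)\le G_R(d(x,y))+\epsilon+2R+C\|F\|_{L^q}$. Moreover, even if that inequality held it would be vacuous: since $y$ ranges over all of $B_{\frac r2}(x)$ you must take $R>\frac r2$, so the additive term $2R>r$ does not tend to zero as $\epsilon,\delta,L^{-1}\to 0$, and $G_R(d(x,y))$ blows up as $y\to x$; such a bound cannot give $e(y)\le\Psi(\epsilon,\delta,L^{-1}\,|\,n,p,r)$. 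To run the barrier argument correctly the pole must sit at $y$ (the point being estimated): one uses $e\ge 0$ on the outer sphere, and the interior point $x$ with $e(x)\le\epsilon$ together with the blow-up of $G$ at the pole and the minimum principle on $A_{s,R}(y)$ forces the boundary minimum of $e-G$ onto the inner sphere, producing $w\in\partial B_s(y)$ with $e(w)\le G(s)+\epsilon+\mathrm{err}$; then $e(y)\le e(w)+2s$ and one optimizes in $s$, with $c_1\sim (Lr)^{-1}$ so that $L^{-1}$ enters as a smallness parameter. Even then one must check that the $L^q$ error (which, as in your own final step, is only $O(\delta^{\frac14})$ after the volume-ratio loss) stays below the barrier separation $G(d(x,y))$, a point that becomes delicate when $L$ is close to $\delta^{-\min\{\frac{p}{2n},\frac12\}}$.

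The paper avoids all of this: it applies the mean value inequality Corollary~\ref{mea-cor} directly to $e$ on $B_r(x)$, using $\Delta e\le \frac{2(n-1)}{Lr-r}+\psi_++\psi_-$, which gives an average bound for $e$ over $B_{\frac r2}(x)$ of the form $c(n,p)\bigl(e(x)+r^2\frac{2(n-1)}{Lr-r}+r^2C(n,p)\delta^{\frac14}\bigr)$; the estimate of $\left(\frac{1}{\volume(B_r(x))}\int_{B_r(x)}\psi_{\pm}^{2p}\right)^{\frac{1}{2p}}$ via \eqref{lap-com}, \eqref{vol-comp} and $L<\delta^{-\min\{\frac{p}{2n},\frac12\}}$ is exactly the one in your last paragraph (that part of your proposal is correct), and the pointwise statement then follows from the $2$-Lipschitz property of $e$ together with volume doubling. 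No boundary data, no Calabi-type regularization, and no optimization in $s,R$ are needed. I recommend either adopting that route or restructuring your barrier argument as described above.
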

Note that in \cite{PW2}, the excess estimate is proved in the non-collapsing case. Here we write a proof by using Corollary~\ref{mea-cor} (see also \cite[Theorem 5.6]{DWZ}).
\begin{proof}
Let $\psi_{\pm}=\max\{\Delta d(\cdot, q_{\pm})- \underline{\Delta}d(\cdot, q_{\pm}), 0 \}$.
Then for $y\in B_r(x)$,
$$\Delta e(y)\leq \frac{2(n-1)}{Lr-r}+ \psi_+ + \psi_-.$$
By Corollary~\ref{mea-cor}, 
\begin{eqnarray*}
-\kern-1em\int_{B_{\frac{r}2}(x)}e(y)&\leq& c(n, p)\left(e(x)+r^2\left(-\kern-1em\int_{B_r(x)}\left(\frac{2(n-1)}{Lr-r}+ \psi_+ + \psi_- \right)^{2p}\right)^{1/2p}\right)\\
&\leq & c'(n, p)\left(e(x)+r^2\left(\frac{2(n-1)}{Lr-r}+ \left(-\kern-1em\int_{B_r(x)}\psi^{2p}_+\right)^{1/2p} + \left(-\kern-1em\int_{B_r(x)}\psi^{2p}_- \right)^{1/2p}\right)\right).
\end{eqnarray*}

We know that 
\begin{eqnarray*} \left(-\kern-1em\int_{B_r(x)}\psi_{\pm}^{2p}\right)^{\frac1{2p}} & \leq &\left(\frac1{\volume(B_r(x))}\int_{B_{L+2r}(q_{\pm})}\psi^{2p}_{\pm}\right)^{\frac{1}{2p}}\\
&\leq & c(n, p) \left(\frac1{\volume(B_r(x))}\int_{B_{L+2r}(q_{\pm})}\rho_H^{p}\right)^{\frac{1}{2p}}\\
&=& c(n, p)\left(\frac{\volume(B_{2L+2r}(x))}{\volume(B_r(x))}\right)^{\frac1{2p}} \bar k^{\frac12}(0, p, L+2r)\\
&\leq & c(n, p)e^{c'(n, p)L\delta^{\frac12}}\left(\frac{2L+2r}{r}\right)^{\frac{n}{2p}}\delta^{\frac12}\\
&\leq & c''(n, p)\left(\frac{2L+2r}{r}\right)^{\frac{n}{2p}}\delta^{\frac12}\\
& < & C(n, p)\delta^{\frac14},
\end{eqnarray*}
where we use \eqref{vol-comp} and the condition $L<\delta^{-\min\{\frac12, \frac{p}{2n}\}}$.
Thus
$$-\kern-1em\int_{B_{\frac{r}2}(x)}e(y)\leq \Psi(\epsilon, \delta, L^{-1} | n, p, r).$$
Since $e$ is 2-Lipschitz, we have that for $y\in B_{\frac{r}2}(x)$,
$$e(y)\leq \Psi(\epsilon, \delta, L^{-1} | n, p, r).$$
\end{proof}

Let $h_{\pm}$ be the harmonic functions on $B_r(x)$ with $\left.h_{\pm}\right|_{\partial B_r(x)}=\left.b_{\pm}\right|_{\partial B_r(x)}$ where $b_{\pm}=d(q_{\pm}, \cdot)-d(q_{\pm}, x)$. Then as in \cite{PW2}, we have that
\begin{Lem} Let the assumption be as in Theorem~\ref{exc-est}, then
 $$|h_{\pm}-b_{\pm}|\leq \Psi(\epsilon, \delta, L^{-1} | n, p, r);$$
 $$-\kern-1em\int_{B_{\frac{r}2}(x)} \left|\nabla h_{\pm}-\nabla b_{\pm}\right|\leq \Psi(\epsilon, \delta, L^{-1} | n, p, r);$$
$$-\kern-1em\int_{B_{\frac{r}2}(x)}|\op{Hess}_{h_{\pm}}|^2\leq \Psi(\epsilon, \delta, L^{-1} | n, p, r).$$
 \end{Lem}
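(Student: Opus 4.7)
The plan is to follow the Cheeger--Colding scheme of \cite{CC1}, replacing each pointwise Ricci use by its integral substitute from Section~2: the $L^{2p}$-Laplacian comparison (2.1.1), the maximum principle (Theorem~\ref{max-pri}), the gradient estimate (Theorem~\ref{gra-est}), and the cut-off of Lemma~\ref{cut-off1}, together with the excess estimate just proved.

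For the $C^0$-bound I would note that in the barrier sense $\Delta(b_\pm - h_\pm)=\Delta b_\pm\le \frac{n-1}{Lr-r}+\psi_\pm$, whose $L^{2p}$-average on $B_r(x)$ is $\le \Psi(\epsilon,\delta,L^{-1}\,|\,n,p,r)$ by (2.1.1), (2.1.2), and the assumption $L<\delta^{-\min\{p/(2n),1/2\}}$. Since $b_\pm-h_\pm$ vanishes on $\partial B_r(x)$, Theorem~\ref{max-pri} with $q=2p$ produces the one-sided bound $\sup_{B_r(x)}(b_\pm-h_\pm)\le\Psi$. For the opposite direction I would use $b_++b_-=e-e(x)$ and Theorem~\ref{exc-est} to get $|b_++b_-|\le\Psi$ in $B_{r/2}(x)$; the harmonic function $h_++h_-$ inherits the same control by the usual maximum principle applied to its boundary values, and combining this with the one-sided bound for each sign yields the two-sided estimate $|b_\pm-h_\pm|\le\Psi$.

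For the averaged gradient estimate, with $f_\pm=h_\pm-b_\pm$ I would integrate Green's identity against a cut-off $\phi$ from Lemma~\ref{cut-off1} with $\phi\equiv 1$ on $B_{r/2}(x)$ and $\op{supp}\phi\subset B_r(x)$, getting
$$\int\phi|\nabla f_\pm|^2 = \int\phi f_\pm\Delta b_\pm - \int f_\pm\langle\nabla\phi,\nabla f_\pm\rangle.$$
The first term is bounded by $\|f_\pm\|_{L^\infty}\int_{B_r(x)}|\Delta b_\pm|$, and $\int|\Delta b_\pm|\le \frac{n-1}{Lr-r}\volume(B_r(x))+\int\psi_\pm\le \Psi\cdot\volume(B_r(x))$ by H\"older and (2.1.1), while the second is absorbed via Cauchy--Schwarz into the left-hand side. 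For the Hessian I would apply Bochner to the harmonic $h_\pm$,
$$\tfrac12\Delta|\nabla h_\pm|^2 = |\Hess h_\pm|^2 + \Ric(\nabla h_\pm,\nabla h_\pm),$$
integrate against $\phi$, and use $\int\Delta\phi=0$ to rewrite the left-hand contribution as $\tfrac12\int(\Delta\phi)(|\nabla h_\pm|^2-1)$. The gradient estimate Theorem~\ref{gra-est} gives $|\nabla h_\pm|^2\le C(n,p,H)$ on $B_{r/2}(x)$, so the Ricci term is $\le C\int\rho_H\le C\volume(B_r(x))^{1-1/p}\bar k^{1/p}\le\Psi$, and the factor $|\nabla h_\pm|^2-1=|\nabla f_\pm|^2+2\langle\nabla f_\pm,\nabla b_\pm\rangle$ is small in $L^1$ by the averaged gradient bound just obtained.

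The main obstacle is the distributional treatment of $\Delta b_\pm=\Delta d(q_\pm,\cdot)$: its singular part along $\op{Cut}(q_\pm)$ is a \emph{negative} measure, so one cannot simply estimate $|\int f_\pm\Delta b_\pm|$ by $\|f_\pm\|_\infty\int|\Delta b_\pm|$ without accounting for this singular contribution, e.g.\ by smoothing $b_\pm$ across the cut locus or by working purely in the barrier sense and passing to a limit as in \cite{CC1}. A secondary bookkeeping issue is that $\psi_\pm$ enters as an $L^{2p}$ object, so every pointwise use of the Laplacian-comparison term $\frac{n-1}{d}+\psi_\pm$ must be converted to an integral estimate via H\"older before the maximum-principle or mean-value inequality is invoked.
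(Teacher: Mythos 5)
The paper offers no proof of this lemma (it is quoted ``as in \cite{PW2}''), and your outline is exactly the intended Cheeger--Colding/Petersen--Wei scheme: maximum principle for the $C^0$ bound, integration by parts for the $C^1$ bound, Bochner formula plus the gradient estimate and $\int\rho_H$ for the Hessian bound. Two steps, however, are wrong as written.

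First, the $C^0$ argument has the two directions interchanged. From $\Delta(b_\pm-h_\pm)=\Delta b_\pm\le \frac{n-1}{(L-1)r}+\psi_\pm$ the maximum principle (Theorem~\ref{max-pri} applied to $u=h_\pm-b_\pm$, which satisfies $\Delta u\ge -\frac{n-1}{(L-1)r}-\psi_\pm$) gives $\sup(h_\pm-b_\pm)\le\Psi$, \emph{not} $\sup(b_\pm-h_\pm)\le\Psi$: an upper bound on $\Delta u$ only controls $\inf u$. The excess must supply the opposite direction: $b_++b_-=e-e(x)\ge-\epsilon$ everywhere, so the harmonic function $h_++h_-\ge-\epsilon$ on $B_r(x)$ by the minimum principle (only the trivial bound $e\ge0$ is needed on $\partial B_r(x)$, which matters because Theorem~\ref{exc-est} controls $e$ only on $B_{\frac r2}(x)$), and then $b_+-h_+=(b_++b_-)-(h_++h_-)-(b_--h_-)\le\Psi+\epsilon+\Psi$ on $B_{\frac r2}(x)$, using $e\le\Psi$ there and the first one-sided bound for the minus sign. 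With your orientation the argument would instead require an upper bound for $e$ on $\partial B_r(x)$, which the stated excess estimate does not provide.

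Second, in the $C^1$ step the bound $\int_{B_r(x)}|\Delta b_\pm|\le\Psi\volume(B_r(x))$ is false: the comparison controls $\Delta b_\pm$ only from above, and both the smooth part and the singular part on $\op{Cut}(q_\pm)$ can be large and negative. You flag this yourself as ``the main obstacle'' but leave it unresolved, and the resolution is not a total-variation bound on $\Delta b_\pm$ at all. Writing $g_\pm=\frac{n-1}{(L-1)r}+\psi_\pm$, one has $\Delta b_\pm-g_\pm\le0$ distributionally (cut locus included), so with $f_\pm=h_\pm-b_\pm$ one estimates $\int \phi f_\pm\Delta b_\pm=\int\phi f_\pm(\Delta b_\pm-g_\pm)+\int\phi f_\pm g_\pm\le \|f_\pm\|_\infty\left(\int\phi(g_\pm-\Delta b_\pm)+\int\phi g_\pm\right)$, and $\int\phi\,\Delta b_\pm=-\int\langle\nabla\phi,\nabla b_\pm\rangle$ is bounded by $c(n,p,H,r)\volume(B_r(x))$; the smallness of the whole term comes from the prefactor $\|f_\pm\|_\infty\le\Psi$, not from smallness of $\int|\Delta b_\pm|$. (The cross term $\int f_\pm\langle\nabla\phi,\nabla f_\pm\rangle$ cannot simply be absorbed as stated; either use $\phi^2$ as cut-off or, more cleanly, integrate by parts on all of $B_r(x)$ using $f_\pm|_{\partial B_r(x)}=0$ and dispense with the cut-off.) With these corrections, and the routine radius adjustments so that the gradient estimate is available on the support of the cut-off in the Bochner step, your argument is the one the paper intends.
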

Now just following the proof in \cite{CC1} (see also \cite{Ch}) by using the segment inequality Theorem~\ref{seg-ine}, we can prove the almost splitting theorem (1.3.1).  Note that to apply our segment inequality Theorem~\ref{seg-ine} to test the Pythagorean theorem, we use $\left|\int_{\gamma}\op{Hess}_{h_{\pm}}(\gamma', \gamma')\right|$ instead of $\int_{\gamma}|\op{Hess}_{h_{\pm}}|$.

To prove the almost metric cone structure, we just need to show 
\begin{Lem} \label{alm-lem}
Let the assumption be as in (1.3.2), then there is $\tilde f$ such that for $r(y)=d(x, y)$, 
$$\left|\tilde f-f \right|\leq \Psi,  \text{ on }B_{b-\Psi}(x),\quad C^0-\text{estimate},$$
$$ -\kern-1em\int_{B_{b-\Psi}(x)} \left|\nabla \tilde f -\nabla f\right|^2\leq \Psi, \quad C^1-\text{estimate},$$
$$-\kern-1em\int_{B_{b-\alpha}(x)} \left|\op{Hess}_{\tilde f}-f''\right|^2\leq \Psi, \quad C^2-\text{estimate},$$
where 
$$f=-\int_t^b\op{sn}_H(s)ds,$$
and
$\Psi=\Psi(\epsilon, \delta | n, p, H, b)$ or $\Psi(\epsilon, \delta | n, p, H, b, \alpha)$.
\end{Lem}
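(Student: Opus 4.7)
The plan is to define $\tilde f$ as the Dirichlet solution of $\Delta \tilde f = n\,\op{sn}_H'(r)$ on $B_b(x)$ with $\tilde f|_{\partial B_b(x)}=0$, where $r(y)=d(x,y)$. This mimics the identity $\underline{\Delta} f = f''(r)+(n-1)\tfrac{\op{sn}_H'(r)}{\op{sn}_H(r)}f'(r) = n\op{sn}_H'(r)$ that the model function $f$ satisfies on the warped product $\mathbb{R}\times_{\op{sn}_H} X$.

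The key quantitative input is to extract $L^1$ smallness of $\psi$ from the cone hypothesis \eqref{vol-con}. From Laplacian comparison \eqref{lap-com} one has $\Delta f(r)\le n\op{sn}_H'(r)+\op{sn}_H(r)\psi$ in the barrier sense; integrating over $B_b(x)$ via the divergence theorem and comparing with the model identity $\op{sn}_H(b)\svolsp{H}{b}=\int_{\underline B_b^H} n\op{sn}_H'(r)$, the hypothesis \eqref{vol-con} together with the relative volume comparison \eqref{vol-comp} forces
\begin{equation*}
\int_{B_b(x)} \op{sn}_H(r)\,\psi \;\le\; \Psi(\epsilon,\delta\mid n,p,H,b)\,\volume(B_b(x)),
\end{equation*}
so that the Laplacian comparison is nearly saturated on $L^1$ average; this replaces the pointwise equality $\psi\equiv 0$ available under $\Ric\ge(n-1)H$.

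With this in hand, the $C^0$ estimate follows from the maximum principle (Theorem~\ref{max-pri}) applied to $f-\tilde f$, after smoothing $f$ off the cut locus by a standard barrier argument, using the one-sided bound on $\Delta(f-\tilde f)$ and the $L^p$ control of $\psi$ from \eqref{lap-com}. The integral $C^1$ bound follows by integration by parts, $\int|\nabla(\tilde f-f)|^2 = \int(f-\tilde f)\Delta(\tilde f-f)$, combined with the $C^0$ estimate and the smallness of $\op{sn}_H(r)\psi$; shrinking from $B_b(x)$ to $B_{b-\Psi}(x)$ absorbs boundary contributions generated by the cut-locus smoothing.

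The main work is the $C^2$ estimate. Since $\Delta\tilde f = nf''(r)$ by construction, the pointwise identity
\begin{equation*}
|\op{Hess}_{\tilde f} - f''(r)\,g|^2 \;=\; |\op{Hess}_{\tilde f}|^2 - \tfrac{(\Delta\tilde f)^2}{n}
\end{equation*}
reduces the goal to showing the Cauchy--Schwarz deficit is $\Psi$-small in $L^1$ average. Multiplying the Bochner identity $\tfrac12\Delta|\nabla\tilde f|^2 = |\op{Hess}_{\tilde f}|^2 + \langle\nabla\Delta\tilde f,\nabla\tilde f\rangle + \Ric(\nabla\tilde f,\nabla\tilde f)$ by a cutoff $\phi$ from Corollary~\ref{cut-off2} supported in $B_{b-\alpha/2}(x)$, integrating by parts, and subtracting the analogous Bochner identity for $f$ in the warped model (which vanishes after a direct computation using $\op{sn}_H''=-H\op{sn}_H$) would yield the claimed bound. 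The hardest piece is controlling $\int\phi\,\Ric(\nabla\tilde f,\nabla\tilde f)$ with only an integral Ricci bound: I would use $\Ric(\nabla\tilde f,\nabla\tilde f)\ge(n-1)H|\nabla\tilde f|^2 - \rho_H|\nabla\tilde f|^2$, bound $|\nabla\tilde f|$ in $L^\infty$ via Theorem~\ref{gra-est}, and pair $\rho_H$ against $\bar k(H,p,1)\le\delta$ using H\"older's inequality, the remaining terms being handled by the $C^0$ and integral $C^1$ bounds established above.
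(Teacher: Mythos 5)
Your construction of $\tilde f$ and your reduction of the $C^2$ bound via $|\op{Hess}_{\tilde f}-f''g|^2=|\op{Hess}_{\tilde f}|^2-\tfrac{(\Delta\tilde f)^2}{n}$ plus Bochner with a cutoff match the paper, but the quantitative heart of your argument is off, and the $C^0$ step as you propose it would fail. The estimate you extract from \eqref{vol-con}, namely $\int_{B_b(x)}\op{sn}_H(r)\psi\le \Psi\,\volume(B_b(x))$, is automatic from $\bar k(H,p,1)\le\delta$ alone (H\"older plus \eqref{lap-com}) and does not encode the almost-volume-cone hypothesis at all: it controls only the \emph{excess} $(\Delta f-n\op{sn}_H'(r))_+$, whereas the lemma is false without \eqref{vol-con} even when $\psi\equiv0$. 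What the divergence-theorem computation together with \eqref{vol-con} actually buys is the reverse direction: an upper bound on the averaged \emph{deficit}, i.e.
$-\kern-1em\int_{B_b(x)}\bigl(n\op{sn}_H'(r)-\Delta f\bigr)\le\Psi$, hence (combined with the pointwise bound $n\op{sn}_H'(r)-\Delta f\ge-\op{sn}_H(r)\psi$) the two-sided $L^1$ estimate $-\kern-1em\int_{B_b(x)}|\Delta\tilde f-\Delta f|\le\Psi$. This is the paper's ``key point,'' and for $H\neq0$ it is not a one-line consequence of the model identity: one needs the auxiliary inequalities comparing $\volume(\partial B_a(x))$, $\volume(B_a(x))$ and $\volume(\partial B_b(x))$ at an intermediate radius $a$ chosen in terms of $\epsilon,\delta$ (the paper's (3.3)--(3.5)).

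The second gap is the order of the estimates. You propose to get the sharp $C^0$ bound first from Theorem~\ref{max-pri} ``using the one-sided bound on $\Delta(f-\tilde f)$,'' but the only available pointwise one-sided bound is $\Delta(\tilde f-f)\ge-\op{sn}_H(r)\psi$, which controls $\sup(\tilde f-f)$; to bound $\sup(f-\tilde f)$ the maximum principle needs the negative part of $\Delta(f-\tilde f)$, i.e.\ the deficit $(n\op{sn}_H'(r)-\Delta f)_+$, to be small in $L^q$ with $q>\tfrac n2$, and the cone hypothesis only gives it in $L^1$ on average. This is precisely why the paper does \emph{not} get smallness of $|\tilde f-f|$ from the maximum principle: it only extracts a crude bound $|\tilde f-f|\le c(n,p,a,b)$, proves the integral $C^1$ estimate first by integration by parts against the $L^1$ Laplacian estimate above (boundary terms vanish since $\tilde f=f$ on $\partial B_b(x)$), and then recovers the sharp $C^0$ estimate from the $C^1$ estimate via the segment inequality Theorem~\ref{seg-ine}, before running the Bochner/cutoff argument for $C^2$. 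Since your $C^1$ step is premised on the sharp $C^0$ bound and on the misdirected ``key input,'' the chain $C^0\Rightarrow C^1$ in your proposal does not close; repairing it essentially forces you back to the paper's ordering.
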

The proof of the above lemma is similar as in \cite{CC1} (see also \cite{Ch}). In fact, take $\tilde f$ as
$$\left\{\begin{array}{cc}\Delta \tilde f = n \op{sn}'_H(r), & \text{ in } B_{b}(x);\\
\tilde f= f, &  \text{ on } \partial B_{b}(x).
\end{array}\right.$$

The key point is the Laplacian estimate of $\tilde f$. For $H=0$, 
$$ -\kern-1em\int_{B_b(x)} \Delta \tilde f  =   -\kern-1em\int_{B_b(x)} n = n.$$
And by Laplacian comparison,
\begin{eqnarray*}
\int_{B_b(x)} \Delta \tilde f &= & \lim_{\sigma \to 0}\int_{B_b(x)\setminus U_{\sigma}} \underline{\Delta} \frac{r^2}{2} \\
&\geq & \lim_{\sigma \to 0}\int_{B_b(x)\setminus U_{\sigma}} \Delta \frac{r^2}2 - r \psi\\
&\geq  & \lim_{\sigma \to 0}\int_{\partial U_{\sigma}\cap B_b(x)} \langle\nabla \frac{r^2}2, N\rangle + \int_{\partial B_b(x)} \langle\nabla \frac{r^2}2, N\rangle - b\int_{B_b(x)}\psi\\
&\geq &  b \volume(\partial B_b(x)) - b\int_{B_b(x)}\psi,
\end{eqnarray*}
where $C$ is the cut locus of $x$ and $U_{\sigma}\subset B_{\sigma}(C\setminus \{x\})$ which has piecewise smooth boundary, $N$ is taken such that $\left<\nabla r ,N\right>>0$ and without loss of generality we may assume $\volume(\partial B_b(x)\setminus C)=\volume(\partial B_b(x))$.

Then
\begin{eqnarray*}
-b \bar k^{\frac12}(H, p, b) &\leq & -\kern-1em\int_{B_b(x)}\Delta \tilde f - \Delta \left(\frac{r^2}2\right) \\
&\leq & \left(n- b\frac{\volume(\partial B_b(x))}{\volume(B_b(x))}\right)\\
&=& b\left(\frac{\svolsp{0}{b}}{\svolball{0}{b}}- \frac{\volume(\partial B_b(x))}{\volume(B_b(x))}\right)\\
&\leq & \frac{b\epsilon}{1+\epsilon}\frac{\svolsp{0}{b}}{\svolball{0}{b}}.
\end{eqnarray*}
And thus
$$\left|-\kern-1em\int_{B_b(x)}\Delta \tilde f - \Delta \left(\frac{r^2}2\right)\right|\leq \Psi(\epsilon, \delta | n, p, b).$$

For $H\neq 0$, first note that \eqref{vol-con} implies that for each $0<a<b$, 
\begin{equation}\frac{\volume(\partial B_a(x))}{\svolsp{H}{a}}\leq \left(1+\epsilon\frac{\svolball{H}{b}}{\svolball{H}{a}}\right)\frac{\volume(\partial B_b(x))}{\svolsp{H}{b}}+\frac{\volume(B_a(x))}{\svolsp{H}{a}}c(n, p)\bar k^{\frac12}(H, p, a)+c(n, p)b\frac{\volume(B_b(x))}{\svolball{H}{a}}\bar k^{\frac12}(H, p, b).\end{equation}
In fact,
\begin{eqnarray*}
& & \frac{\volume(\partial B_a(x))}{\svolsp{H}{a}}\frac{\svolsp{H}{b}}{\volume(\partial B_b(x))}\leq \left(\frac{\volume(B_a(x))}{\svolball{H}{a}}+\frac{\volume(B_a(x))}{\svolsp{H}{a}}c(n,p)\bar k^{\frac12}(H, p, a)\right)\frac{\svolsp{H}{b}}{\volume(\partial B_b(x))}\\
& =& \left(\frac{\volume(B_b(x))-\volume(A_{a, b}(x))}{\svolball{H}{a}}+\frac{\volume(B_a(x))}{\svolsp{H}{a}}c(n,p)\bar k^{\frac12}(H, p, a)\right)\frac{\svolsp{H}{b}}{\volume(\partial B_b(x))}\\
&=& \frac{\volume(B_b(x))}{\svolball{H}{a}}\frac{\svolsp{H}{b}}{\volume(\partial B_b(x))}-\frac{\volume(A_{a,b}(x))}{\svolball{H}{a}}\frac{\svolsp{H}{b}}{\volume(\partial B_b(x))}+\frac{\volume(B_a(x))}{\svolsp{H}{a}}c(n,p)\bar k^{\frac12}(H, p, a)\frac{\svolsp{H}{b}}{\volume(\partial B_b(x))}\\
&\leq & (1+\epsilon)\frac{\svolball{H}{b}}{\svolball{H}{a}}-\frac{\svolann{H}{a, b}}{\svolball{H}{a}}+\left(\frac{\volume(B_a(x))}{\svolsp{H}{a}}c(n,p)\bar k^{\frac12}(H, p, a)+\frac{\volume(B_b(x))}{\svolball{H}{a}}c(n,p)b\bar k^{\frac12}(H, p, b)\right)\frac{\svolsp{H}{b}}{\volume(\partial B_b(x))}\\
&=& 1+\epsilon\frac{\svolball{H}{b}}{\svolball{H}{a}}+\left(\frac{\volume(B_a(x))}{\svolsp{H}{a}}c(n,p)\bar k^{\frac12}(H, p, a)+\frac{\volume(B_b(x))}{\svolball{H}{a}}c(n,p)b\bar k^{\frac12}(H, p, b)\right)\frac{\svolsp{H}{b}}{\volume(\partial B_b(x))}.
\end{eqnarray*}
where we use \eqref{sph-ball} and  the fact that by \eqref{vol-element}, for $a\leq s\leq t\leq b$,
$$\frac{\volume(\partial B_t(x))}{\svolsp{H}{t}}-\frac{\volume(\partial B_s(x))}{\svolsp{H}{s}}\leq \frac{\volume(B_t(x))}{\svolsp{H}{s}}c(n,p)\bar k^{\frac12}(H, p, t)$$
and thus
$$\frac{\volume(A_{a, b}(x))}{\svolann{H}{a, b}}\geq \frac{\volume(\partial B_b(x))}{\svolsp{H}{b}}-\frac{\volume(B_b(x))}{\svolann{H}{a, b}}c(n,p)b\bar k^{\frac12}(H, p, b).$$

From the poof of (3.3), we can see that
\begin{equation}
\frac{\volume(B_a(x))}{\svolball{H}{a}}\frac{\svolsp{H}{b}}{\volume(\partial B_b(x))}\leq 1+\epsilon\frac{\svolball{H}{b}}{\svolball{H}{a}}+\frac{\volume(B_b(x))}{\svolball{H}{a}}\frac{\svolsp{H}{b}}{\volume(\partial B_b(x))}c(n,p)b\bar k^{\frac12}(H, p, b).\end{equation}

And then
\begin{eqnarray}
\frac{\volume(B_a(x))}{\volume(B_b(x))} &\leq& \left(\svolball{H}{a}+\epsilon\svolball{H}{b}\right)\frac{\volume(\partial B_b(x))}{\svolsp{H}{b}\volume(B_b(x))} + c(n,p)b\bar k^{\frac12}(H, p, b) \nonumber\\
&\leq & \left(\svolball{H}{a}+\epsilon\svolball{H}{b}\right)\left(\frac{1}{\svolball{H}{b}}+\frac{c(n, p)\bar k^{\frac12}(H, p, b)}{\svolsp{H}{b}}\right)+ c(n,p)b\bar k^{\frac12}(H, p, b) \nonumber\\
&\leq & \Psi(a, \delta, \epsilon | n, p, H, b).
\end{eqnarray}

For $H=1$. Take $0<a<b$,
\begin{eqnarray*}
 -\kern-1em\int_{B_b(x)} \Delta \tilde f & = &  \frac{1}{\volume(B_b(x))}\left(\int_{B_{a}(x)}n\cos r +\int_{A_{a, b}(x)} n\cos r \right)\\
 &\leq &\frac{1}{\volume(B_{b}(x))}\left(n\volume(B_{a}(x))+ \int_a^b\int_{S^{n-1}} n\cos r \mathcal A(r, \theta)d\theta dr\right)\\
&\leq & \frac{1}{\volume(B_{b}(x))}\left(n\volume(B_{a}(x))+ \int_a^b\int_{S^{n-1}} n\cos r \left(\mathcal A(a, \theta)\frac{\underline{\mathcal A}_H(r)}{\underline{\mathcal A}_H(a)} +   \underline{\mathcal A}_H(r)\int_a^r \psi(s, \theta)\frac{\mathcal A(s, \theta)}{\underline{\mathcal A}_H(s)}ds\right)d\theta dr\right)\\
&=& \frac{1}{\volume(B_{b}(x))}\left(n\volume(B_{a}(x))+\frac{\sin^n b-\sin^n a}{\sin^{n-1}a}\volume(\partial B_a(x))+\int_{S^{n-1}}\int_a^b \frac{\sin^n b-\sin^n s}{\sin^{n-1}s}\psi\mathcal A(s, \theta)dsd\theta\right)\\
&\leq & \frac{1}{\volume(B_{b}(x))}\left(n\volume(B_{a}(x))+\frac{\sin^n b-\sin^na}{\sin^{n-1}a}\volume(\partial B_a(x))+\left(\frac{\sin^n b}{\sin^{n-1} a}-\sin a\right)\int_{B_b(x)}\psi\right)
\end{eqnarray*}

By Laplacian comparison,
\begin{eqnarray*}
\int_{B_{b}(x)} \Delta \tilde f &= & \lim_{\sigma \to 0}\int_{B_{ b}(x)\setminus U_{\sigma}} -\underline{\Delta} \cos r \\
&\geq & \lim_{\sigma \to 0}\int_{B_{b}(x)\setminus U_{\sigma}} -\Delta \cos r -\sin r \psi\\
&\geq  & \lim_{\sigma \to 0}\int_{\partial U_{\sigma}\cap B_{b}(x)} \langle-\nabla \cos r, N\rangle + \int_{\partial B_b(x)} \langle-\nabla \cos r, N\rangle -\sin b\int_{B_b(x)}\psi\\
&\geq & \sin b \volume(\partial B_b(x)) -\sin b\int_{B_b(x)}\psi.
\end{eqnarray*}

Then
\begin{eqnarray*}
& & -\sin b c(n,p) \bar k^{\frac12}(H, p, b)\\
& \leq & -\kern-1em\int_{B_{b}(x)}\Delta \tilde f - \Delta (-\cos r) \\
&\leq & \sin b\frac{\svolsp{H}{b}}{\volume(B_{b}(x))}\left( \frac{\volume(\partial B_a(x))}{\svolsp{H}{a}}-\frac{\volume(\partial B_b(x))}{\svolsp{H}{b}}\right) + n\frac{\volume(B_a(x))}{\volume(B_b(x))}+\frac{\sin^n b}{\sin^{n-1} a}c(n,p)\bar k^{\frac12}(H, p, b)\\
&\leq & \sin b\frac{\svolsp{H}{b}}{\volume(B_{b}(x))}\left(\epsilon\frac{\volume(\partial B_b(x))}{\svolsp{1}{b}}\frac{\svolball{H}{b}}{\svolball{H}{a}}+\frac{\volume(B_a(x))}{\svolsp{H}{a}}c(n,p)\bar k^{\frac12}(H, p, a)+\frac{\volume(B_b(x))}{\svolball{H}{a}}c(n,p)b\bar k^{\frac12}(H, p, b)\right)\\
& & + n\frac{\volume(B_a(x))}{\volume(B_b(x))}+\frac{\sin^n b}{\sin^{n-1} a}\bar k^{\frac12}(H, p, b)\\
&\leq & \sin b\epsilon \left(\frac{\svolsp{H}{b}}{\svolball{H}{a}}+c(n, p)\frac{\svolball{H}{b}}{\svolball{H}{a}}\bar k^{\frac12}(H, p, b)\right)\\
& &+ \sin bc(n,p) \frac{\svolsp{H}{b}}{\svolball{H}{a}}\left(\frac{\svolball{H}{a}}{\svolsp{H}{a}}\bar k^{\frac12}(H, p, a)+b\bar k^{\frac12}(H, p, b)\right)
 + n\frac{\volume(B_a(x))}{\volume(B_b(x))}+\frac{\sin^n b}{\sin^{n-1} a}c(n,p)\bar k^{\frac12}(H, p, b)\\
&\leq & \Psi(\epsilon, \delta |n, p,  b),
\end{eqnarray*}
where we take $a=\max\{\epsilon^{\frac1{2n}}, \delta^{\frac{p}{2((n-1)2p+n)}}\}$ and use (3.3), (3.5).

 For $H=-1$, it is similar as the case $H=1$.

Now by a standard argument as in \cite{CC1, Ch},  we can show that $\tilde f$ satisfies Lemma~\ref{alm-lem}. First, using maximal principle Theorem~\ref{max-pri}, derive $|\tilde f-f|\leq c(n, p, a, b)$ and using integral by part, we have $C^1$-estimate by the above Laplacian estimate.  And by gradient estimate Theorem~\ref{gra-est}, we have the upper bound, $|\nabla \tilde f-\nabla f|\leq c(n, p, a, b)$. Using segment inequality Theorem~\ref{seg-ine} and $C^1$-estimate, the $C^0$-estimate is derived. Finally apply cut-off function Theorem~\ref{cut-off1} and Bochner's formula, we have the $C^2$-estimate. 

As the discussion in almost splitting (1.3.1), to prove the cosine law of (1.3.2) by using Lemma~\ref{alm-lem} and Theorem~\ref{seg-ine}, we will use $\left|\int_{\gamma}\op{Hess}_{\tilde f}(\gamma', \gamma')\right|$ instead of $\int_{\gamma}\left|\op{Hess}_{\tilde f}\right|$.

Finally, recall that a $\epsilon$-splitting map $b=\left(b_1, \cdots, b_k\right): B_r(x)\to \Bbb R^k$ is a harmonic map such that
\begin{equation*}\left|\nabla b_j\right|\leq c(n), \forall\,  j;\end{equation*}
\begin{equation*}-\kern-1em\int_{B_r(x)} \sum_{j, l}\left|\left<\nabla b_j, \nabla b_l\right>-\delta_{jl}\right|^2+r^2\sum_j\|\op{Hess}_{b_j}\|^2\leq \epsilon^2.\end{equation*}

And we have that
\begin{Thm}\label{spl-map}
Given $n>0, p>\frac{n}{2}$, consider a sequence of complete $n$-manifolds $(M_i, x_i)\to (X, x)$ with $\bar k_{M_i}(0, p, 1)\leq \delta_i\to 0$ then

{\rm (3.7.1)} If there is a sequence of $\epsilon_i$-splitting maps $b^i: B_2(x_i)\to \Bbb R^k$, $\epsilon_i\to 0$, then
$$(B_1(x_i), x_i)\to (B_1(0, y), (0, y))\subset (\Bbb R^k\times Y, (0, y))$$ where $Y$ is a length space. 

{\rm (3.7.2)} If $(M_i, x_i)\to (\Bbb R^k\times Y, (0, y))$, then for $\epsilon_i\to 0$, any fixed $r>0$, there are $\epsilon_i$-splitting maps $b^i: B_r(x)\to \Bbb R^k$.
\end{Thm}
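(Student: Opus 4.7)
The plan is to prove both halves by transposing the Cheeger--Colding correspondence between splitting maps and metric splittings into the integral-Ricci setting, using Theorem~\ref{seg-ine} and Theorem~\ref{exc-est} as the two workhorses. The two directions can be established independently, and each reduces to $L^2$-control on the gradients and Hessians of specific harmonic functions built from the splitting data.

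For (3.7.1), fix a sequence of $\epsilon_i$-splitting maps $b^i=(b^i_1,\dots,b^i_k):B_2(x_i)\to\Bbb R^k$. I apply Corollary~\ref{seg-two} to $u=b^i_j$ with $\Lambda=c(n)$ on $B_2(x_i)$; combined with the Laplacian comparison (2.1.1), which gives $\int_{B_2(x_i)}\psi\le c(n,p,H)\delta_i^{1/2}\vol(B_2(x_i))$, this yields for any $A_1,A_2\subset B_1(x_i)$ of definite measure
\begin{equation*}
-\kern-1em\int_{A_1\times A_2}\bigl|\langle\nabla b^i_j,\gamma'\rangle(y)-\langle\nabla b^i_j,\gamma'\rangle(z)\bigr|\,dy\,dz\le \Psi(\epsilon_i,\delta_i\mid n,p).
\end{equation*}
Together with the $L^1$-bound $-\kern-1em\int\bigl|\langle\nabla b^i_j,\nabla b^i_l\rangle-\delta_{jl}\bigr|\le\epsilon_i$ from hypothesis, this implies after integration along minimal geodesics that the transverse Pythagorean identity $d(y,z)^2\approx|b^i(y)-b^i(z)|^2+\rho_i(y,z)^2$ holds for most pairs $(y,z)$ in $B_1(x_i)$, where $\rho_i$ is a pseudo-distance arising from the fibres of $b^i$. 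Passing to the Gromov--Hausdorff limit and defining $Y$ as the quotient of $\lim_i B_1(x_i)$ by the level sets of $b=\lim_i b^i$ gives $B_1(x)\subset B_1(0,y)\subset\Bbb R^k\times Y$; the space $Y$ is a length space because geodesics transverse to the $\Bbb R^k$ factor arise as limits of geodesics in $M_i$.

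For (3.7.2), given $(M_i,x_i)\to(\Bbb R^k\times Y,(0,y))$, I select $q^{\pm,i}_j\in M_i$ converging to $(\pm Le_j,y)$ with $L=L(\epsilon_i)\to\infty$ chosen slowly relative to $\delta_i$. Setting $b^{\pm,i}_j(z)=\tfrac12\bigl(d(q^{-,i}_j,z)-d(q^{+,i}_j,z)\bigr)-\tfrac12\bigl(d(q^{-,i}_j,x_i)-d(q^{+,i}_j,x_i)\bigr)$, I define $b^i_j$ on $B_{3r}(x_i)$ as the harmonic extension of $b^{\pm,i}_j$ along the boundary. The excess estimate Theorem~\ref{exc-est} and the subsequent lemma yield $\|b^i_j-b^{\pm,i}_j\|_{C^0(B_r(x_i))}\le\Psi$, $-\kern-1em\int_{B_r(x_i)}|\nabla b^i_j-\nabla b^{\pm,i}_j|\le\Psi$, and $-\kern-1em\int_{B_r(x_i)}|\op{Hess}_{b^i_j}|^2\le\Psi$; Theorem~\ref{gra-est} gives $|\nabla b^i_j|\le c(n)$ pointwise. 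The remaining orthonormality estimate $-\kern-1em\int_{B_r(x_i)}\bigl|\langle\nabla b^i_j,\nabla b^i_l\rangle-\delta_{jl}\bigr|^2\le\Psi$ is then derived by applying the cut-off function Lemma~\ref{cut-off1} to the Bochner identity for $|\nabla b^i_j|^2$, integrating the resulting Laplacian bound against $|\nabla b^i_j|^2-1$, and invoking the mean value inequality Corollary~\ref{mea-cor}, as in \cite{CC2}.

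The main obstacle is controlling the extra error $C_0\vol(B_{2r}(x))c(n,p)\bar k^{1/2}(H,p,2r)$ that Theorem~\ref{seg-ine} introduces beyond the classical segment inequality. To exploit the inequality one needs an \emph{a priori} pointwise bound $C_0$ on $\mathcal F_u$ for the relevant choices of $u$---in practice $u=\op{Hess}_{b^i_j}(\gamma',\gamma')$ or $u=|\nabla b^i_j|^2-1$---which is supplied by combining Theorem~\ref{gra-est} with a pointwise Hessian bound from standard elliptic theory applied on a slightly smaller ball. Second, in (3.7.2) the scale $L$ must grow fast enough to localize the Busemann-like functions while remaining below the threshold $\delta^{-\min\{p/(2n),1/2\}}$ demanded by Theorem~\ref{exc-est}; balancing these two rates against the rescaling of Lemma~\ref{pack} is the quantitative heart of the argument and what ultimately forces all the accumulated error bounds $\Psi(\epsilon_i,\delta_i\mid n,p)$ to vanish in the limit $i\to\infty$.
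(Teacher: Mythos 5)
Your outline for both halves is essentially the argument the paper has in mind: Theorem~\ref{spl-map} is stated there without proof precisely because, once the segment inequality (Theorem~\ref{seg-ine}, Corollary~\ref{seg-two}), the excess estimate (Theorem~\ref{exc-est}) with its companion lemma on the harmonic replacements $h_{\pm}$, the gradient estimate (Theorem~\ref{gra-est}), the cut-off functions (Lemma~\ref{cut-off1}) and Bochner's formula are available, the Cheeger--Colding correspondence between splitting maps and metric splittings goes through verbatim; your (3.7.1) and (3.7.2) follow that template. There is, however, one concrete misstep in the paragraph you flag as ``the main obstacle'': you propose to obtain the a priori bound $C_0$ on $\mathcal F_u$ for $u=\op{Hess}_{b^i_j}(\gamma',\gamma')$ from ``a pointwise Hessian bound from standard elliptic theory applied on a slightly smaller ball.'' Under a purely integral Ricci bound no pointwise Hessian (or $C^{1,\alpha}$) estimate for harmonic functions is available --- this is exactly the regularity that is missing in this setting --- and none is needed. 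The paper's own remark before Lemma~\ref{alm-lem} makes the point: one applies Theorem~\ref{seg-ine} to $\left|\int_{\gamma}\op{Hess}_{h}(\gamma',\gamma')\right|$ rather than to $\int_{\gamma}\left|\op{Hess}_{h}\right|$, because by the fundamental theorem of calculus this equals $\left|\left<\nabla h,\gamma'\right>(z)-\left<\nabla h,\gamma'\right>(y)\right|\le 2\sup|\nabla h|$, so $C_0$ comes from the pointwise gradient bound alone; this is exactly Corollary~\ref{seg-two}, which you already invoke in (3.7.1). Delete the elliptic-theory claim and route every use of the segment inequality through this identity (for $u=|\nabla b^i_j|^2-1$ the bound $C_0$ is immediate from $|\nabla b^i_j|\le c(n)$ and the bounded length of $\gamma$).

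Two smaller points. In (3.7.2), the Bochner/cut-off/mean-value step you describe yields the $L^2$ Hessian bound and $|\nabla b^i_j|^2\to 1$, but not by itself the off-diagonal estimates $\left<\nabla b^i_j,\nabla b^i_l\right>\to 0$ for $j\neq l$; as in \cite{CC2} one also runs the construction for diagonal directions (points limiting to $\pm L(e_j+e_l)/\sqrt 2$), so that $|\nabla(b^i_j+b^i_l)|^2\to 2$ delivers the cross terms --- worth stating explicitly, since you list the full orthonormality as an output of the Bochner step. And in (3.7.1), when you pass from the segment inequality to an average over $A_1\times A_2$, the hypothesis should be that $A_1,A_2$ occupy a definite fraction of $B_{2r}(x_i)$ (via the relative volume comparison \eqref{vol-comp}); ``definite measure'' has no invariant meaning in the collapsing situation, where only volume ratios are controlled.
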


\section{Sharp H\"older continuity and Regularity of the limit spaces}

By mean value inequality Theorem~\ref{mea-val}, the following parabolic approximation estimate holds (see \cite{TZ}, \cite{ZZ1} for the non-collapsing case and \cite{DWZ}, \cite{ZZ2} for the collapsing case). 
Given two points $q_{\pm}\in M$ with $d(q_+, q_-)=L\geq1$, as in Theorem~\ref{exc-est} let
$$e(x)=d(q_+, x)+d(q_-, x)-d(q_+, q_-),$$
$$b^{+}(x)=d(q_+, q_-)-d(q_+, x), \quad b^-(x)=d(q_-, x), \quad \psi^{\pm}=\max\{0, \Delta b^{\pm}-\underline{\Delta}_Hb^{\pm}\}.$$
Given $\sigma>0$, take cut-off function
$$\phi^{\pm}=\left\{\begin{array}{cc} 1, & \text{ on } A_{\frac{\sigma}{4}L, 8L}(q_{\pm});\\
0, & \text{ on } M\setminus A_{\frac{\sigma}{16}L, 16L}(q_{\pm}),\end{array}\right.$$
as in Corollary~\ref{cut-off2}. Let $\phi=\phi^+\phi^-$ and let $M_{r, s}=A_{rL, sL}(q_{+})\cap A_{rL, sL}(q_-)$.

Let $b^{\pm}_t, e_t$ be the solution to the heat equation $\partial_t-\Delta=0$ on $M$ with $b_0^{\pm}=\phi b^{\pm}$, $e_0=\phi e$. Then $e_t=b^-_t-b^+_t$ and 
\begin{Thm}[\cite{DWZ}]\label{par-app}
Given $n, p>\frac{n}{2}, \sigma>0$, there is $\delta(n, p)>0, C=C(n, p, \sigma)$, such that for $0<\delta<\delta(n, p)$, if $\bar k(-1, p, 1)\leq \delta$, then for $q_{\pm}$ as above with $L<\delta^{-\min\{\frac{p}{2n},\frac12\}}$, $x\in M_{\frac{\sigma}{2}, 4}$, $t<\frac{1}{100}(\sigma L)^2$, 

{\rm (4.1.1)} $-\kern-1em\int_{B_{\sqrt t}(x)}e(y)\leq C\left(e(x)+ L^{-1}t+ t^{1-\frac{n}{4p}}\bar k^{\frac12}(-1, p, L)\right)$.

{\rm (4.1.2)} for $y\in B_{\frac12\sqrt t}(x)$, 
$e_t(y)\leq C\left(e(x)+L^{-1}t+t^{1-\frac{n}{4p}}\delta^{\frac12}\right)$ and 

$|\nabla e_t(y)|\leq Ct^{-\frac12}\left( e(x)+L^{-1}t+t^{1-\frac{n}{4p}}\delta^{\frac12}\right)$;

{\rm (4.1.3)} $\left|b_t^{\pm}-b^{\pm}\right|\leq C\left(e(x)+L^{-1}t+t^{1-\frac{n}{4p}}\delta^{\frac12}\right)$;

{\rm (4.1.4)} $\left|\nabla b_t^{\pm}\right|^2(y)\leq 1+Ct^{1-\frac{n}{4p}}\delta^{\frac12}$ for $y\in M_{\frac{\sigma}{2}, 4}$;


{\rm (4.1.5)} $\int_{t/2}^t-\kern-1em\int_{B_{\sqrt t}(x)}\left|\op{Hess}_{b^{\pm}_{\tau}}\right|^2dvd\tau\leq C\left(e(x)t^{-\frac12}+L^{-1}t^{\frac12}+t^{\frac12-\frac{n}{4p}}\delta^{\frac12}\right)$.
\end{Thm}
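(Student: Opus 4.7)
The plan is to establish (4.1.1)--(4.1.5) sequentially, starting from the excess function $e$ and then propagating control to its heat-flow regularization $e_t$ and to $b_t^\pm$. The strategy mirrors the Colding-Naber / Tian-Zhang scheme, but every appeal to Bishop-Gromov or to pointwise Laplacian comparison is replaced by its integral analog: Theorem~\ref{com-int} for Laplacian and volume comparison, Theorem~\ref{mea-val} and Corollary~\ref{mea-cor} for the parabolic and elliptic mean value inequalities, and the DWZ Gaussian upper bound on the heat kernel that follows from their local Sobolev constant estimate.

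For (4.1.1), first observe that on $M_{\sigma/2, 4}$ each distance $d(q_\pm, \cdot)$ is at least $\sigma L/2$, so Laplacian comparison gives $\Delta e \leq C(n)L^{-1} + \psi^+ + \psi^-$; here the bounded part uses that for $H=-1$ the model Laplacian $(n-1)\coth r$ differs from its asymptotic value $(n-1)$ by $O(e^{-r})$ and that the two model contributions along a minimizing geodesic between $q_+$ and $q_-$ cancel up to $O(L^{-1})$. Rescale $B_{\sqrt t}(x)$ to unit radius, apply Corollary~\ref{mea-cor} to the nonnegative $2$-Lipschitz function $e$, and control the $L^q$ average of $\psi^\pm$ with $q=2p$ via H\"older and \eqref{lap-com}; comparing $\vol(B_L(q_\pm))$ to $\vol(B_{\sqrt t}(x))$ through \eqref{vol-comp} converts this into $t^{1-\frac{n}{4p}}\bar k^{\frac12}(-1,p,L)$, while the bounded piece of the source produces the $L^{-1}t$ contribution.

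For (4.1.3), write $(\partial_t - \Delta)(b_t^\pm - b^\pm) = \Delta b^\pm$ on the smooth part of $M$ and apply Duhamel. The contribution of the initial cut-off mismatch $(\phi - 1)b^\pm$ is exponentially small in $(\sigma L)^2/t$, since $\phi \equiv 1$ on $M_{\sigma/4, 8}$ and the heat kernel satisfies $H_\tau(y,z) \leq C\vol(B_{\sqrt\tau}(y))^{-1}\exp(-d(y,z)^2/(C\tau))$ by DWZ. The source integral $\int_0^t \int H_{t-s}\Delta b^\pm$ splits into a bounded piece yielding $L^{-1}t$ and a $\psi^\pm$ piece yielding $t^{1-\frac{n}{4p}}\delta^{\frac12}$ by the same H\"older and Laplacian-comparison manipulation as in the previous step. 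Combining with (4.1.1) and the identity $e_t = b_t^- - b_t^+$ yields the pointwise half of (4.1.2); the gradient half follows by inserting the heat kernel gradient bound $|\nabla_y H_\tau(y,z)| \leq C\tau^{-\frac12} H_{\tau'}(y,z)$, available from Theorem~\ref{gra-est} together with a Li-Yau argument adapted to integral Ricci by DWZ, and repeating the same computation with the extra factor $\tau^{-\frac12}$.

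For (4.1.4) apply the Bochner identity $(\partial_\tau - \Delta)|\nabla b_\tau^\pm|^2 = -2|\Hess b_\tau^\pm|^2 - 2\Ric(\nabla b_\tau^\pm, \nabla b_\tau^\pm)$ and run Theorem~\ref{mea-val} on $(|\nabla b_\tau^\pm|^2 - 1)_+$; the initial datum $|\nabla(\phi b^\pm)|^2 \leq 1$ on $M_{\sigma/4, 8}$ together with the $L^p$ control on $\rho_H$ gives the stated bound $1 + Ct^{1-\frac{n}{4p}}\delta^{\frac12}$. Finally (4.1.5) follows by integrating the same Bochner identity in space-time against the cut-off from Corollary~\ref{cut-off2} over $(t/2, t) \times B_{\sqrt t}(x)$: the gradient terms on the temporal boundary are controlled by (4.1.4), the cut-off errors by the $|\Delta\phi|$ and $|\nabla\phi|$ bounds, and the $\Ric$ term by $\bar k^{\frac12}(-1,p,L)$. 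The main obstacle throughout is the Duhamel calculation in the third paragraph: extracting the sharp exponent $1-\frac{n}{4p}$ requires carefully interleaving the Gaussian decay of $H_\tau$, the volume comparison \eqref{vol-comp}, and H\"older in both space and time, and once that rate is correctly isolated the remainder of (4.1.1)--(4.1.5) propagates by essentially routine parabolic regularity arguments.
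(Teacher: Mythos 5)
This theorem is not proved in the paper at all: it is imported verbatim from Dai--Wei--Zhang (with Zhang--Zhu cited for related estimates), so your sketch can only be checked on its own merits, and as written it has a genuine gap precisely at the point where the sharp form of the estimates lives. In your first step you assert that on $M_{\frac{\sigma}{2},4}$ one has $\Delta e \leq C(n)L^{-1}+\psi^{+}+\psi^{-}$ because ``the two model contributions cancel up to $O(L^{-1})$''. For $H=-1$ the comparison gives $\Delta d(q_{\pm},\cdot)\leq (n-1)\coth\bigl(d(q_{\pm},\cdot)\bigr)+\psi^{\pm}$, and both model terms are bounded \emph{below} by $(n-1)$; their sum is $2(n-1)+O\bigl((\sigma L)^{-1}\bigr)$, so there is no cancellation (the $O(d^{-1})$ behavior you invoke is the $H=0$ model, $(n-1)/d$). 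The resulting extra source term of size $O(1)$ produces, after the mean value inequality, a contribution of order $t$ rather than $L^{-1}t$, and since $t$ is only restricted by $t<\frac{1}{100}(\sigma L)^{2}$ while $L$ may be as large as $\delta^{-\min\{\frac{p}{2n},\frac12\}}$, it cannot be absorbed into $C\left(e(x)+L^{-1}t+t^{1-\frac{n}{4p}}\delta^{\frac12}\right)$. The obstruction is real, not just an artifact of your bookkeeping: in hyperbolic space itself ($\bar k\equiv 0$), at the midpoint $x$ of a segment of length $L\gg 1$ the excess at perpendicular distance $s$ is comparable to $s^{2}$, so the average of $e$ over $B_{\sqrt t}(x)$ is comparable to $t$, independent of $L$. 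So this step needs an additional input that your sketch does not supply --- e.g.\ rescaling so that the background lower bound is nearly zero (as in Colding--Naber), or settling for a term proportional to $t$ with constant depending on $\sigma$, which is what the application in Section~4 (where $L=l=1$) actually uses.

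The same issue resurfaces in your treatment of (4.1.4): in the Bochner inequality $-2\Ric(\nabla b^{\pm}_{\tau},\nabla b^{\pm}_{\tau})\leq 2(n-1)\left|\nabla b^{\pm}_{\tau}\right|^{2}+2\rho_{-1}\left|\nabla b^{\pm}_{\tau}\right|^{2}$, and the $2(n-1)$ background term yields a multiplicative factor like $e^{2(n-1)t}$, which is not of the form $1+Ct^{1-\frac{n}{4p}}\delta^{\frac12}$ unless $t$ is bounded or the background curvature has been scaled away; your sketch simply omits this term. The remaining ingredients you list --- Duhamel with a Gaussian heat-kernel upper bound coming from the DWZ Sobolev constant, H\"older together with \eqref{lap-com} to convert $\psi^{\pm}$ into $t^{1-\frac{n}{4p}}\bar k^{\frac12}$, volume comparison \eqref{vol-comp} to pass between the scales $\sqrt t$ and $L$, and the cut-off of Corollary~\ref{cut-off2} plus (4.1.4) for the space-time Hessian bound (4.1.5) --- are the expected ones and are consistent with the strategy of the quoted references, but without repairing the two points above the argument does not give the stated inequalities.
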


And for any $\epsilon$-geodesic $\gamma$, a unit speed curve between $q_+, q_-$ satisfying $\left| |\gamma|-d(q_+, q_-)\right|\leq \epsilon^2 d(q_+, q_-)$, as in \cite{CoN}, by  (4.1.3) and (4.1.4),
\begin{Cor}\label{int-par}
 Let the assumption be as in Theorem~\ref{par-app}, then there is $C=C(n, p, \sigma)$ such that for $\sigma L\leq t_0<t_0+\sqrt t\leq (1-\sigma)L$,
$$\int_{t_0}^{t_0+\sqrt t}-\kern-1em\int_{B_{\sqrt t}(\gamma(s))}\left||\nabla b_t^{\pm}|^2-1\right|dvds\leq C(\epsilon^2L+L^{-1}t+t^{1-\frac{n}{4p}}\delta^{\frac12});$$
$$\int_{\frac{t}2}^t\int_{t_0}^{t_0+\sqrt t}-\kern-1em\int_{B_{\sqrt t}(\gamma(s))}\left|\op{Hess}_{b_{\tau}^{\pm}}\right|^2dvdsd\tau\leq C(\epsilon^2L+L^{-1}t+t^{1-\frac{n}{4p}}\delta^{\frac12}).$$
\end{Cor}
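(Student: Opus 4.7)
The plan is to integrate the pointwise and time-averaged bounds of Theorem~\ref{par-app} along the $\epsilon$-geodesic $\gamma$, using the crucial fact that $e(\gamma(s))\leq\epsilon^2L$ uniformly in $s\in[0,\ell]$, which follows at once from the triangle inequality and the $\epsilon$-geodesic hypothesis.

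I would first dispatch the Hessian bound, which is the cleaner of the two. Applying (4.1.5) at $x=\gamma(s)$ and integrating in $s$ over $[t_0,t_0+\sqrt t]$ yields
$$\int_{t_0}^{t_0+\sqrt t}\int_{t/2}^{t}-\kern-1em\int_{B_{\sqrt t}(\gamma(s))}\left|\op{Hess}_{b^{\pm}_\tau}\right|^2 dv\,d\tau\,ds \leq C\int_{t_0}^{t_0+\sqrt t}\!\!\left(e(\gamma(s))t^{-\frac12}+L^{-1}t^{\frac12}+t^{\frac12-\frac{n}{4p}}\delta^{\frac12}\right)ds,$$
and the uniform bound $e(\gamma(s))\leq\epsilon^2L$ collapses the first term to $\epsilon^2L$ while the remaining two pick up the missing factor of $\sqrt t$, producing exactly $C(\epsilon^2L+L^{-1}t+t^{1-n/(4p)}\delta^{\frac12})$.

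For the gradient bound, I would split $\big||\nabla b_t^{\pm}|^2-1\big|=(|\nabla b_t^{\pm}|^2-1)_++(1-|\nabla b_t^{\pm}|^2)_+$. The positive part is controlled pointwise by (4.1.4). For the negative part the strategy is to work first along $\gamma$. By the fundamental theorem of calculus,
$$\int_{t_0}^{t_0+\sqrt t}\langle\nabla b_t^+,\gamma'\rangle(\gamma(s))\,ds = b_t^+(\gamma(t_0+\sqrt t))-b_t^+(\gamma(t_0)),$$
and by (4.1.3) together with $|b^+(\gamma(s))-(L-s)|\leq\epsilon^2L$ (valid along any $\epsilon$-geodesic), the right-hand side equals $-\sqrt t$ up to the desired error. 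A Cauchy--Schwarz argument combining this identity with the pointwise gradient upper bound from (4.1.4) forces $\int_{t_0}^{t_0+\sqrt t}(1-\langle\nabla b_t^+,\gamma'\rangle^2)(\gamma(s))\,ds$ to be small, and since $1-|\nabla b_t^+|^2\leq 1-\langle\nabla b_t^+,\gamma'\rangle^2$, this yields the $L^1$ bound for the negative part along $\gamma$ itself.

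The hard part will be upgrading this one-dimensional bound along $\gamma$ to a genuinely tubular average on $\bigcup_s B_{\sqrt t}(\gamma(s))$. My plan is to parallel-transport $\gamma'(s)$ along minimizing radial geodesics from $\gamma(s)$ into the ball, producing a unit vector field $V$ with respect to which $\langle\nabla b_t^+,V\rangle(y)-\langle\nabla b_t^+,\gamma'(s)\rangle(\gamma(s))$ can be written as a radial integral of $\op{Hess}_{b_t^+}(\dot c_y,V)$; squaring, averaging in $y\in B_{\sqrt t}(\gamma(s))$ and applying Cauchy--Schwarz reduces the tubular control to the Hessian average, which is already under control by the first half of the corollary. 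A subtlety is that (4.1.5) controls a $\tau$-average of the Hessian while the gradient bound is at the single time $\tau=t$; this can be handled by choosing a favorable $\tau_*\in[t/2,t]$ by the mean value, comparing $|\nabla b_{\tau_*}^+|^2$ with $|\nabla b_t^+|^2$ via the heat equation, and absorbing the resulting error into the $t^{1-n/(4p)}\delta^{\frac12}$ term. Collecting all three errors produces the stated bound $C(\epsilon^2L+L^{-1}t+t^{1-n/(4p)}\delta^{\frac12})$.
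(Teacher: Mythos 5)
Your treatment of the Hessian estimate is correct and is exactly the intended argument: apply (4.1.5) at $x=\gamma(s)$, use $e(\gamma(s))\leq\epsilon^2L$ from the $\epsilon$-geodesic property, and integrate in $s$ over an interval of length $\sqrt t$. Likewise, handling the positive part $(|\nabla b_t^{\pm}|^2-1)_+$ pointwise by (4.1.4) is fine.

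The gradient estimate is where your proposal departs from the paper, and the departure contains a genuine gap. Your plan is: prove the bound for $1-\langle\nabla b_t^+,\gamma'\rangle^2$ along $\gamma$ itself (this part is fine), then upgrade to the tubular average by comparing $\langle\nabla b_t^+,V\rangle(y)$ with its value at $\gamma(s)$ through a radial integral of $\op{Hess}_{b_t^+}$ and Cauchy--Schwarz. Two problems. First, the only Hessian control available, (4.1.5) (equivalently the first half of the corollary), is an average over $\tau\in[t/2,t]$, not a bound at the single time $\tau=t$ at which the gradient appears; your proposed repair -- pick a good $\tau_*$ and compare $|\nabla b^+_{\tau_*}|^2$ with $|\nabla b^+_t|^2$ ``via the heat equation'' -- is itself a substantial unproven step: by Bochner the time derivative of $|\nabla b^+_\tau|^2$ involves $\Delta|\nabla b^+_\tau|^2$, $|\op{Hess}_{b^+_\tau}|^2$ and a Ricci term, and localizing with a cutoff produces terms like $t^{-1/2}\int|\op{Hess}_{b^+_\tau}|$ that are again only controlled in a root-mean-square sense. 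Second, and more decisively, the quantitative outcome is wrong: after Cauchy--Schwarz the tubular error is of the order $\bigl(t\,-\kern-1em\int|\op{Hess}_{b^+}|^2\bigr)^{1/2}$, i.e.\ the \emph{square root} of the small quantity $e(\gamma(s))t^{-1/2}+L^{-1}t^{1/2}+t^{\frac12-\frac{n}{4p}}\delta^{\frac12}$, which after the $ds$-integration gives terms such as $t^{1/4}\epsilon L^{1/2}$ rather than $\epsilon^2L$; this cannot be absorbed into the stated linear right-hand side.

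The intended route (this is the Colding--Naber argument the paper invokes, using only (4.1.3) and (4.1.4)) avoids the Hessian altogether. Since $|\nabla b^{\pm}|=1$ a.e., one has pointwise
$$1-|\nabla b_t^{\pm}|^2\;\leq\;2\bigl\langle\nabla\bigl(b^{\pm}-b_t^{\pm}\bigr),\nabla b^{\pm}\bigr\rangle+\bigl(|\nabla b_t^{\pm}|^2-1\bigr)_+\;\leq\;2\bigl\langle\nabla\bigl(b^{\pm}-b_t^{\pm}\bigr),\nabla b^{\pm}\bigr\rangle+Ct^{1-\frac{n}{4p}}\delta^{\frac12},$$
and $\langle\nabla(b^{\pm}-b_t^{\pm}),\nabla b^{\pm}\rangle$ is a directional derivative along the radial geodesics from $q_{\pm}$. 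Integrating it in polar coordinates through the tube $\bigcup_{s}B_{\sqrt t}(\gamma(s))$ -- the same segment-inequality/volume-element comparison used in the proof of Theorem~\ref{seg-ine}, with the $\psi$-error controlled by $\bar k$ -- converts it into boundary values of $b^{\pm}-b_t^{\pm}$, each of size $C\bigl(e(\gamma(s))+L^{-1}t+t^{1-\frac{n}{4p}}\delta^{\frac12}\bigr)$ by (4.1.3) and $e(\gamma(s))\leq\epsilon^2L$, divided by the segment length $\sqrt t$; integrating in $s$ then gives exactly the claimed linear bound. You should replace your tubular-upgrade step by this argument (or supply a complete proof of the time-comparison and show how to avoid the square-root loss, which I do not see how to do with the estimates of Theorem~\ref{par-app} alone).
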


In the following, we will use Theorem~\ref{par-app} and Corollary~\ref{int-par} to prove Theorem~\ref{sha-hol}. 

In \cite{CoN},  to prove the sharp H\"older continuity,  a key point is the better estimates of parabolic approximation (compared with the harmonic approximation in the proof of splitting theorem):
$$-\kern-1em\int_{B_r(\gamma(t))}e\leq C r^2, \quad \int_{\gamma}-\kern-1em\int_{B_r(\gamma(t))} |\op{Hess}_{b^{\pm}_{r^2}}|^2\leq C.$$
In manifolds with integral Ricci curvature, the parabolic approximation estimates in Theorem~\ref{par-app} is not sufficient. However by similar ideas as in \cite{CoN} and as in \cite{De}, we can pass the distance distortion estimate to the limit space.  A difficulty here is that, as the discussion in \cite{De}, the distance distortion estimates for small balls centered in the interior of a limit unit speed geodesic $\gamma$ under the gradient flow of $\nabla d_{\gamma(0)}$ can not  be controlled obviously as in manifolds which is an important start point of an induction process. To overcome it,  we will estimate balls centered at points which can be chosen arbitrary close to the interior points of $\gamma$ and then take a limit. First we have the following two lemmas as in \cite[Lemma 5.1, Lemma 5.2]{De}. 
The gradient flow of $-\nabla d_{\gamma(0)}$, $\phi: X\times [0, \infty)\to X$, is defined as
$$\phi_t(x)=\left\{\begin{array}{cc} \gamma_{x, \gamma(0)}(t), & \text{ if } d(x, \gamma(0)\geq t;\\
\gamma(0), & \text{ if } d(x, \gamma(0))<t,\end{array}\right.$$
where $\gamma_{x, y}$ is a limit unit speed geodesic from $x$ to $y$.

\begin{Lem}[time extend] \label{time-extend}
Let the assumption be as in Theorem~\ref{sha-hol}, let $\nu$ be the Radon measure of $X$ defined as in Corollary~\ref{seg-lim} and let $l=1$.  Then there are $\epsilon_1(n, p, \sigma)>0, r_1(n, p,\sigma)>0, C(n, p)>1, c(n, p)>0$, such that for $r\leq r_1$, $\sigma\leq t_0\leq 1-\sigma$, there is $z\in B_r(\gamma(t_0))$ satisfying that for each $s\leq \epsilon_1$,

{\rm (4.3.1)} $C^{-1}\leq \frac{\nu(B_r(\phi_s(z)))}{\nu(B_r(z))}\leq C$, 

{\rm (4.3.2)} there is $A\subset B_r(z)$, $\nu(A)\geq (1- c)\nu(B_r(z))$ and $ \phi_s(A)\subset B_{2r}(\phi_s(z))$,

{\rm (4.3.3)} $e(z)\leq c(n, p,\sigma)r^2$.
\end{Lem}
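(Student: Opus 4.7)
The plan is to prove the three conclusions first in each approximating manifold $M_i$ using the parabolic approximations $b^{\pm}_{r^2}$ from Theorem~\ref{par-app}, and then pass to the limit via Corollary~\ref{seg-lim} and Theorem~\ref{spl-map}. Set $q_+=\gamma_i(0)$, $q_-=\gamma_i(1)$, $L=1$, and take $t=r^2$. Applied at the basepoint $x=\gamma_i(t_0)$ (where $e(x)=0$), (4.1.1) gives
\[
-\kern-1em\int_{B_r(\gamma_i(t_0))} e \,\le\, C\bigl(r^2+r^{2-\frac{n}{2p}}\delta_i^{1/2}\bigr),
\]
and Corollary~\ref{int-par} applied to the subarc of $\gamma_i$ of length $\epsilon_1$ centred at $\gamma_i(t_0)$ controls the space-time average of $|\op{Hess}_{b^-_{r^2}}|^2$ on the $r$-tube around that subarc by $C(\epsilon_1^2+\delta_i^{1/2})$.

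The key step is to upgrade these averaged bounds to a pointwise statement at a well-chosen $z_i\in B_r(\gamma_i(t_0))$ via the segment inequality Theorem~\ref{seg-ine}. I apply it with $A_1=B_r(\gamma_i(t_0))$, $A_2=B_{2r}(\gamma_i(t_0-\epsilon_1))$ and $u=|\op{Hess}_{b^-_{r^2}}|$; by (4.1.3)-(4.1.4) one has $|b^-_{r^2}-b^-|\le\Psi$ and $|\nabla b^-_{r^2}|=1+o(1)$, so the integral curve $\phi_s^{M_i}(y)$ of $-\nabla b^-_{r^2}$ issuing from $y\in A_1$ agrees up to a $\Psi$-error with the minimal geodesic from $y$ towards a point in $A_2$, and the segment inequality therefore controls the line integral $\int_0^{\epsilon_1}|\op{Hess}_{b^-_{r^2}}|(\phi_\tau^{M_i}(y))\,d\tau$ averaged over $y\in A_1$. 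Combined with the $L^1$-bound on $e$ and Markov's inequality, this produces $z_i\in B_r(\gamma_i(t_0))$ and a subset $A_i\subset B_r(z_i)$ of relative measure $\ge 1-c$ for which $e(z_i)\le c(n,p,\sigma)r^2$ and a uniform pointwise bound on the integrated Hessian along $\phi_s^{M_i}(y)$ holds for every $y\in A_i$.

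Conditions (4.3.1) and (4.3.2) then follow from the standard Jacobian comparison along the flow of $-\nabla b^-_{r^2}$: the pointwise integrated Hessian bounds both the infinitesimal volume distortion $\partial_s\log J_s=-\Delta b^-_{r^2}$ (via $|\Delta b^-_{r^2}|\le\sqrt n\,|\op{Hess}_{b^-_{r^2}}|$) and the Jacobi-field distortion of neighbouring trajectories, yielding $\phi_s^{M_i}(A_i)\subset B_{2r}(\phi_s^{M_i}(z_i))$ together with the two-sided volume ratio bound. Finally, extracting a subsequential Gromov-Hausdorff limit $z=\lim z_i\in B_r(\gamma(t_0))$ and using that $\phi_s^{M_i}\to\phi_s$ on compact sets (minimal geodesics to $\gamma_i(0)$ converge to limit geodesics to $\gamma(0)$) together with the measure convergence of Corollary~\ref{seg-lim} transfers (4.3.1)-(4.3.3) to the limit space.

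The main obstacle is precisely the one flagged just before the statement: at the interior point $\gamma(t_0)$ itself no pointwise Hessian bound survives in the limit, only its $L^1$-average on an $r$-tube, so a pointwise estimate at an exactly chosen centre is unattainable. The segment inequality is what allows this conversion into a pointwise statement at a nearby $z\in B_r(\gamma(t_0))$, at the unavoidable cost of discarding an exceptional subset of $B_r(z)$. Ensuring that this exceptional set has relative measure uniformly $<c$ in $i$, and that the weak convergence $\phi_s^{M_i}\to\phi_s$ is strong enough to preserve both the inclusion in (4.3.2) and the volume ratios in (4.3.1), is the delicate part; this in turn is what forces the restrictions $r\le r_1(n,p,\sigma)$ and $s\le\epsilon_1$ recorded in the statement.
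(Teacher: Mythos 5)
Your proposal assembles the right ingredients (Theorem~\ref{par-app}, Corollary~\ref{int-par}, the segment inequality Theorem~\ref{seg-ine}, and a limiting argument), but it misses the central difficulty of the lemma: the time $\epsilon_1$ must be uniform in $r$. All the quantitative input you invoke lives at scale $r$: the parabolic approximation is taken at time $t=r^2$, and Corollary~\ref{int-par} only controls $\left||\nabla b_t^{\pm}|^2-1\right|$ and $|\op{Hess}_{b^{\pm}_{\tau}}|^2$ on a tube of radius about $r$ along a subsegment of $\gamma_i$ of length $\sqrt t=r$. A single application of these bounds plus Markov's inequality can therefore only control the flow for times comparable to $r$, not for a definite time $\epsilon_1(n,p,\sigma)\gg r$; to go further you must already know that the flowed set stays in a $2r$-tube around the geodesic and that the volume ratios along the flow stay bounded --- which is exactly the content of (4.3.1)--(4.3.2). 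The paper resolves this circularity by a bootstrap: it first gets (4.3.1)--(4.3.3) for $s\le\epsilon(n,\sigma,r)$ (using the almost splitting, Corollary~\ref{spl-thm}, near $\gamma(t_0)$), and then proves a self-improvement statement --- if (4.3.1)--(4.3.3) hold up to time $\epsilon\le\epsilon_1$, then at a nearby center $z'$ they hold up to the same time with strictly better constants ($\tfrac{3r}{2}$ in place of $2r$, $\tfrac{C}{2}$ and $2C^{-1}$ in place of $C$, $C^{-1}$); the distortion functionals $dt_1,dt_2,dt_3$, the segment inequality and Corollary~\ref{int-par} enter precisely there, and the resulting factor $\sqrt{\epsilon}$ is what makes the improvement possible once $\epsilon_1(n,p,\sigma)$ is small. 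This open--closed argument in $s$ is what removes the $r$-dependence of the time interval; your one-pass argument has no substitute for it.

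A second, related gap concerns the upper bound in (4.3.1) and the identity of the flow. The flow $\phi_s$ in the statement is the limit geodesic flow toward $\gamma(0)$, not the flow of $-\nabla b^-_{r^2}$, and a Jacobian computation $\partial_s\log J_s=-\Delta b^-_{r^2}$ along the approximation flow (even granting pointwise control, which you only have in a space--time $L^2$ sense on a short tube) would at best bound the measure of the image of $A$ from below; it does not bound $\nu(B_r(\phi_s(z)))$ from above, and in the limit space there is no Jacobian to speak of. The paper obtains the upper bound by flowing a large subset of $B_r(\phi_s(z'))$ back with the flow of $+\nabla h_i^+$ (the functionals $dt_2$, $dt_3$), showing it lands in $B_{7r}(z')$, and then using volume comparison; some backward-flow argument of this type is unavoidable and is absent from your proposal.
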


\begin{Lem}[radius extend] \label{radius-extend}
Let the assumption be as in Lemma~\ref{time-extend}. Then there are $\epsilon_2(n, p,\sigma)>0, r_2(n, p, \sigma)>0$, such that for $\sigma\leq t_0\leq 1-\sigma$ and $s\leq \epsilon_2$, if (4.3.1)-(4.3.3) hold for some $z\in B_r(\gamma(t_0))$ and $r<r_2$, then (4.3.1)-(4.3.3) hold for $r'\in[4r, 16r]$ and the same $z$.
\end{Lem}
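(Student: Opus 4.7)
I would follow the approach of Colding-Naber \cite{CoN} and Deng \cite{De} for the smooth case, adapting it to the integral Ricci setting via the tools from Section~3 and Theorem~\ref{par-app}. Property (4.3.3) at radius $r'$ is immediate from $r'\geq 4r$: $e(z)\leq c(n,p,\sigma)r^2\leq c(n,p,\sigma)r'^2$. For (4.3.1), the point is that relative volume comparison in the limit (Theorem~\ref{com-int}~(2.1.2) passed to $\nu$) gives two-sided bounds
$$\frac{\svolball{H}{r}}{\svolball{H}{r'}}\leq \frac{\nu(B_r(w))}{\nu(B_{r'}(w))}\leq 1,$$
uniform in $w$ and depending only on $r'/r\leq 16$. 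Writing
$$\frac{\nu(B_{r'}(\phi_s(z)))}{\nu(B_{r'}(z))} = \frac{\nu(B_{r'}(\phi_s(z)))}{\nu(B_r(\phi_s(z)))}\cdot\frac{\nu(B_r(\phi_s(z)))}{\nu(B_r(z))}\cdot\frac{\nu(B_r(z))}{\nu(B_{r'}(z))},$$
the middle factor is controlled by (4.3.1) at scale $r$ and the outer factors by volume comparison; choosing the universal constant $C(n,p)$ in Lemma~\ref{time-extend} with enough room absorbs the loss.

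The bulk of the work is (4.3.2). I would split $B_{r'}(z)=B_r(z)\cup(B_{r'}(z)\setminus B_r(z))$. On the first piece the subset $A$ from the hypothesis already satisfies $\phi_s(A)\subset B_{2r}(\phi_s(z))\subset B_{2r'}(\phi_s(z))$, contributing mass at least $(1-c)\nu(B_r(z))$. For the annular piece I would apply Theorem~\ref{par-app} with $t\sim r'^2$ centered at an interior point $\gamma(t_*)$ of $\gamma$ close to $z$ (existence of such $t_*$ with $d(z,\gamma(t_*))=O(r)$ follows from $e(z)\leq cr^2$). Then (4.1.4) gives $|\nabla b_t^\pm|^2\approx 1$, and Corollary~\ref{int-par} gives $\int_{t/2}^{t}-\kern-1em\int_{B_{\sqrt t}(\gamma(t_*))}|\op{Hess}_{b_\tau^\pm}|^2\leq \Psi$. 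Feeding $|\op{Hess}_{b_\tau^\pm}|^2$ into the segment inequality Theorem~\ref{seg-ine} (or its limit version Corollary~\ref{seg-lim}) along minimizing geodesics from $x\in B_{r'}(z)$ to $\gamma(0)$ controls $|\langle\nabla b_\tau^\pm,\gamma'\rangle(x)-\langle\nabla b_\tau^\pm,\gamma'\rangle(z)|$ on a set of large measure, which forces the angle between the geodesics from $x$ and from $z$ towards $\gamma(0)$ to be small. A Pythagorean comparison for the resulting near-splitting geometry then yields $d(\phi_s(x),\phi_s(z))\leq 2r'$ on the same set.

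The main obstacle lies in the collapsing regime: since $\nu(B_r(z))$ has no definite lower bound, all estimates must be expressed as fractions of $\nu(B_{r'}(z))$, which is exactly the normalization the segment inequality Theorem~\ref{seg-ine} produces and why the non-collapsing argument of \cite{PW2, TZ} does not suffice. A secondary technicality is tracking the error terms $L^{-1}t+t^{1-n/(4p)}\delta^{1/2}$ from Theorem~\ref{par-app} and Corollary~\ref{int-par}, and choosing $r_2,\epsilon_2$ small enough so that, after the radius is enlarged by a factor of at most $16$, these errors and the resulting proportion of points $x\in B_{r'}(z)$ with $\phi_s(x)\notin B_{2r'}(\phi_s(z))$ remain below the thresholds $c,C$ fixed in Lemma~\ref{time-extend}, so that the radius extension closes and the induction with the time extension can be iterated.
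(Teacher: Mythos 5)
Your reduction of (4.3.3) to the larger radius is fine, and your plan for (4.3.2) is in the right spirit: the paper (following Deng, with all estimates carried out on the manifolds $M_i$ via Theorem~\ref{par-app}, Corollary~\ref{int-par} and the segment inequality, then passed to the limit) also controls the distortion $|d(u,v)-d(\phi_s(u),\phi_s(v))|$ for pairs in $B_{r'}(z)$ by integrating $\bigl|\int_{c_s}\op{Hess}_{h_i^{+}}\bigr|$ and $|\nabla h_i^{+}-\nabla b^{+}|$ along the flow, with the scale-$r$ set $A$ (whose image stays in $B_{2r}(\phi_s(z))$) serving as the anchor that pins the conclusion to the same center $z$; your "angle/Pythagorean" phrasing is vaguer than this but aims at the same estimates.

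The genuine gap is in your treatment of (4.3.1). Deducing the measure ratio at scale $r'$ from the ratio at scale $r$ by two applications of volume doubling costs a multiplicative factor comparable to $\svolball{H}{16r}/\svolball{H}{r}\sim 16^{n}$ in each direction, and you propose to absorb this by enlarging the constant $C(n,p)$ of Lemma~\ref{time-extend}. That cannot work: Lemma~\ref{radius-extend} must reproduce (4.3.1)--(4.3.3) with the \emph{same} fixed constants, because Corollary~\ref{main} iterates the radius extension roughly $\log_4(r_3/r)$ times (unbounded as $r\to 0$), together with the time extension; any per-step loss compounds and destroys the induction. In the actual argument the two-sided bound (4.3.1) at scale $r'$ is re-derived at that scale, exactly as at the end of the proof of Lemma~\ref{time-extend}: the lower bound comes from (4.3.2) at scale $r'$ plus elementary volume comparison at the single scale $r'$, while the upper bound requires flowing a large subset of $B_{r'}(\phi_s(z))$ \emph{back} by the flow $\tilde\phi_{-s}$ of $+\nabla h_i^{+}$ (the sets $D_2$, $D_3$ in that proof) and using a second distortion estimate for that backward flow; your proposal never invokes the backward flow, so even a degraded upper bound in (4.3.1) is not actually established by what you wrote. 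Repairing the proof means running the $dt_2$/$dt_3$ distortion estimates at scale $r'$ with the parabolic approximation at time $t\sim (r')^2$, choosing $r_2,\epsilon_2$ so the error terms $\epsilon^2L+L^{-1}t+t^{1-\frac{n}{4p}}\delta^{\frac12}$ stay below the fixed thresholds $c, C$.
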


The proof of Lemma~\ref{time-extend} and Lemma~\ref{radius-extend} is similar as in \cite{De}. A difference is that we will do all estimates on manifolds and then pass to the limit. Here we give a rough proof of Lemma~\ref{time-extend}. 
\begin{proof}[Proof of Lemma~\ref{time-extend}]
Let $e(y)=d(y, \gamma(0))+d(y, \gamma(1))-1$. By Theorem~\ref{par-app}, for $r<\frac1{10}\sigma$,
$$-\kern-1em\int_{B_{r}(\gamma(t_0))}e(y)\leq C(n, p, \sigma) r^2.$$
Then by Corollary~\ref{spl-thm} and relative volume comparison \eqref{vol-comp}, for $r\leq r(n, p, \sigma)$, there is $z\in B_r(\gamma(t_0))$, $\epsilon=\epsilon(n,\sigma, r)$, such that for any $s\leq \epsilon$, (4.3.1)-(4.3.3) hold.

To remove the dependence of $r$ in $\epsilon$, we will show that there are $r_1(n, p, \sigma), \epsilon_1(n, p, \sigma)$ such that for $r<r_1$, for $z\in B_r(\gamma(t_0))$, if (4.3.1)-(4.3.3) hold for $s<\epsilon\leq \epsilon_1$, then there is $z'\in B_r(\gamma(t_0))$ satisfying (4.3.3) and  for $s<\epsilon$,
 
 {\rm (4.3.1')} $2C^{-1}\leq \frac{\nu(B_r(\phi_s(z')))}{\nu(B_r(z'))}\leq \frac{C}2$, 

{\rm (4.3.2')} there is $A'\subset B_r(z')$, $\nu(A')\geq (1- c)\nu(B_r(z'))$ and $ \phi_s(A')\subset B_{\frac{3r}{2}}(\phi_s(z'))$.

To find $z'\in B_r(\gamma(t_0))$, consider
$$dt_1(s)(u, v)=\min\{r, \max_{0\leq \tau \leq s}|d(u, v)-d(\phi_s(u),\phi_s(v))|\}.$$

Let $\phi_{s, i}$ be the gradient flow of $-\nabla d_{\gamma_i(0)}$ in $M_i$ such that $\phi_{s, i}\to \phi_s$ and let 
$$dt^i_1(s)(u_i, v_i)=\min\{r, \max_{0\leq \tau \leq s}|d(u_i, v_i)-d(\phi_{s, i}(u_i),\phi_{s, i}(v_i))|\}.$$ 
Then $dt_1^i(s)(u_i, v_i)\to dt_1(s)(u, v)$ if $u_i\to u, v_i\to v$. 

Assume $z_i\in B'_r(\gamma_i(t_0))=B_r(\gamma_i(t_0))\cap\{y, \, e_i(y)\leq c_1 r^2+ r^{2-\frac{n}{2p}}\delta_i\}$ and $z_i\to z$. Let $A_i\subset B_r(z_i)$, $A_i\to A$. Then for $s\leq \epsilon$, $\phi_{s, i}(A_i)\subset B_{2r+\Psi_i}(\phi_{s, i}(z_i))$, where $\Psi_i\to 0$ as $i\to \infty$. Let $D_1=A_i\cap B'_r(\gamma_i(t_0))$ and let
$$U_1^i(s)=\left\{(u_i, v_i)\in D_1\times B'_{2r}(z_i),\, dt^i_1(s)(u_i, v_i)<r\right\}.$$
Let $h_i^{\pm}=b_{t}^{\pm}$ be the parabolic approximation on $M_i$ as in Theorem~\ref{par-app}. Since
\begin{eqnarray*}
& & \frac{d}{ds}\int_{D_1\times B'_{2r}(z_i)} dt_1^i(s)(u, v)\\
&\leq & \int_{U_1^i(s)}\left(|\nabla h_i^{+}-\nabla b^{+}|(\phi_{s, i}(u))-|\nabla h_i^+-\nabla b^+|(\phi_{s, i}(v))\right) + \int_{U_1^i(s)}\left|\int_{c_s(u, v)}\op{Hess}_{h_i^+}(c_s(u, v)(\tau))\right|,
\end{eqnarray*}
where $c_s(u, v)$ is the minimal geodesic from $\phi_{s,i}(u)$ to $\phi_{s, i}(v)$,
and by segment inequality Theorem~\ref{seg-ine} and \eqref{vol-comp},
\begin{eqnarray*}
& &\int_{U_1^i(s)}\left|\int_{c_s(u, v)}\op{Hess}_{h_i^+}(c_s(u, v)(\tau))\right|\\
&\leq & \int_{(\phi_{s, i}, \phi_{s, i})(U_1^i(s))}\left|\int_{\gamma_{u, v}}\op{Hess}_{h_i^+}(\gamma_{u, v}(\tau))\right|\\
&\leq & \int_{B_{2r+\Psi_i}(\phi_{s, i}(z_i))\times B_{7r+\Psi_i}(\phi_{s, i}(z_i))}\left|\int_{\gamma_{u, v}}\op{Hess}_{h_i^+}(\gamma_{u, v}(\tau))\right|\\
&\leq & c(10r+\Psi_i)\volume(B_{7r+\Psi_i}(\phi_{s, i}(z_i)))^2\left(-\kern-1em\int_{B_{10r+\Psi_i}(\phi_{s, i}(z_i))}|\op{Hess}_{h_i^+}| + (10r+\Psi_i)^{-\frac{n}{2p}}\delta_i^{\frac12}\right)\\
&\leq & \sup_{s\in[0, \epsilon]}\frac{\volume(B_{7r+\Psi_i}(\phi_{s, i}(z_i)))^2}{\volume(B_{r}(z_i))^2}c(10r+\Psi_i)\volume(B_{r}(z_i))^2\left(-\kern-1em\int_{B_{10r+\Psi_i}(\phi_{s, i}(z_i))}|\op{Hess}_{h_i^+}| + (10r+\Psi_i)^{-\frac{n}{2p}}\delta_i^{\frac12}\right),
\end{eqnarray*}
\begin{eqnarray*}
& & \int_0^{\epsilon}\int_{U_1^i(s)}|\nabla h_i^{+}-\nabla b^{+}|(\phi_{s, i}(u)) \\
&\leq & \volume(B_{2r}(z_i))\int_0^{\epsilon}\int_{D_1}|\nabla h_i^{+}-\nabla b^{+}|(\phi_{s, i}(u))\\
&\leq & \volume(B_{2r}(z_i))^2\sqrt{\epsilon}\left(\int_0^{\epsilon}-\kern-1em\int_{D_1}|\nabla h_i^{+}-\nabla b^{+}|^2(\phi_{s,i}(u))\right)^{\frac12}\\
&\leq & \volume(B_{2r}(z_i))^2\sqrt{\epsilon}\left(\int_0^{\epsilon}-\kern-1em\int_{D_1}\left| |\nabla h_i^{+}|^2-1\right|(\phi_{s, i}(u))+ 2\left|\left<\nabla h_i^+,\nabla b^{+}\right>-1\right|(\phi_{s, i}(u))\right)^{\frac12}\\
& \leq & c\sup_{s\in[0, \epsilon]}\frac{\volume(B_{2r+\Psi_i}(\phi_{s, i}(z_i)))^{\frac12}}{\volume(B_{r}(z_i))^{\frac12}}\volume(B_r(z_i))^2\sqrt{\epsilon}\left(\int_0^{\epsilon}-\kern-1em\int_{B_{2r+\Psi}(\phi_{s, i}(z_i))}\left| |\nabla h_i^{+}|^2-1\right|(u)\right.\\
& & \left.+ 2\left|\left<\nabla h_i^+,\nabla b^{+}\right>-1\right|(u)\right)^{\frac12}\\
\end{eqnarray*}
for $t=(10r+\Psi)^2$, by Corollary~\ref{int-par} we have that
$$\int_{D_1\times B'_{2r}(z_i)} dt_1^i(s)(u, v)\leq \sup_{s\in[0, \epsilon]}\frac{\volume(B_{7r+\Psi_i}(\phi_{s, i}(z_i)))^2}{\volume(B_{r}(z_i))^2}C\sqrt{\epsilon}\volume(B_r(z_i))^2(r+ r^{1-\frac{n}{2p}}\delta_i+\Psi_i).$$

Passing to the limit we have that, for
 $B'_{2r}(z_i)\to B'_{2r}(z)=B_{2r}(z)\cap \{y,\, e(y)\leq c_1r^2\}$, $D_1=A_1\cap B'_r(\gamma(t_0))$, and for $s\leq \epsilon$
$$\int_{D_1\times B'_{2r}(z)} dt_1(s)(u, v)\leq C\sqrt{\epsilon}r\nu(B_r(z))^2.$$
Choose $z'\in D_1$ and let $B''_r(z')=B_r(z')\cap B'_{2r}(z)$. By integral excess estimate, we have  $$\frac{\nu(B''_r(z'))}{\nu(B_r(z'))}>c(n, p, \sigma)>0.$$
Thus
$$-\kern-1em\int_{B''_{r}(z')}dt_1(s)(z', v)\leq C'\sqrt{\epsilon} r .$$
When $\epsilon<\epsilon_1(n, p, \sigma)$, we have 
$$-\kern-1em\int_{B''_r(z')}dt_1(s)(z', v)\leq \frac14 r .$$
Thus for any $\eta>0$, there is $A'_{\eta}\subset B''_r(z')\subset B_r(z')$, such that $\frac{\nu(A'_{\eta})}{\nu(B''_r(z'))}>\eta$ and for any $v\in A'_{\eta}$,
$$dt_1(s)(z', v)\leq \eta^{-1}\frac14 r.$$

To prove (4.3.1'), first note that by (4.3.2'), for $s\in [0, \epsilon]$, $\nu(B_{3r/2}(\phi_s(z')))\geq \nu(\phi_s(A'))\geq (1+c(n)\epsilon)^{-n}\nu(A')$. Thus for $s<\epsilon<\epsilon_1(n, p, \sigma)$
$$\frac{\nu(B_r(\phi_s(z')))}{\nu(B_r(z'))}\geq \frac{\svolball{-1}{r}}{\svolball{-1}{\frac{3r}2}}\frac{\nu(B_{\frac{3r}2}(\phi_s(z')))}{\nu(B_r(z'))}\geq 2C^{-1}.$$
To prove the other side bound, as in \cite{De}, we will find a sufficient large subset of $B_{r}(\phi_s(z'))$ which stay close to $z'$ under a flow that does not decrease the measure significantly. Consider the flows of $-\nabla h_i^{+}$, $\tilde\phi_{s, i}$, and the flow of $\nabla h_i^{+}$, $\tilde \phi_{-s, i}$, on $M_i$ and then pass them to the limit to derive flow $\tilde \phi_{s}$ and $\tilde \phi_{-s}$ on $X$.

Let
$$dt_2(s)(u, v)=\min\{r, \max_{0\leq \tau \leq s}\{d(u, v)-d(\phi_{\tau}(u), \tilde \phi_{\tau}(v))\}\}.$$
For $A''=A'\cap \{y, \, e(y)\leq cr^2\}$, as the above discussion, we can show that
$$\int_{A''\times B_r(z')}dt_2(s)\leq c\sqrt{\epsilon}r\nu(B_r(z))^2.$$
Choose $z_1\in A''$, then there is $D_2\subset B_r(z')$ and $\epsilon\leq \epsilon_1(n, p, \sigma)$ such that for $s\leq \epsilon$,
$$\frac{\nu(D_2)}{\nu(B_r(z'))}\geq 1-c, \quad \forall\, v\in D_2,  dt_2(s)(z_1, v)\leq \frac{r}{2},$$
and thus 
$$\tilde \phi_s(D_2)\subset B_{4r}(\phi_s(z')), \quad \frac{\nu(\tilde \phi_s(D_2))}{\nu(B_r(z))}\geq C(n, p, \sigma).$$

Now flow $\tilde \phi_s(D_2)$ back by $\tilde \phi_{-s}$. Consider
$$dt_3(s)(u, v)=\min\{r, \max_{0\leq \tau\leq s}\{d(u, v)-d(\tilde\phi_{-\tau}(u), \tilde \phi_{-\tau}(v))\}\},$$
we have
$$\int_{\tilde\phi_s(D_2) \times B_r(\phi_s(z'))}dt_3(s)\leq c\sqrt{\epsilon}r\nu(B_r(z))^2.$$
Choose $z_2\in \tilde\phi_s(D_2)$, then there is $D_3\subset B_r(\phi_s(z'))$ such that
$$\frac{\nu(D_3)}{\nu(B_r(\phi_s(z')))}\geq 1-c, \quad \forall\, v\in D_3, dt_3(s)(z_2, v)\leq \frac12r,$$
and thus
$$\tilde \phi_{-s}(D_3)\subset B_{7r}(z').$$
Then for $\epsilon\leq \epsilon_1(n, p, \sigma)$.
$$\frac{\nu(B_r(\phi_s(z')))}{\nu(B_r(z'))}\leq \frac{\svolball{-1}{7r}}{\svolball{-1}{r}}\frac{\nu(B_r(\phi_s(z')))}{\nu(B_{7r}(z'))}\leq \frac{\svolball{-1}{7r}}{\svolball{-1}{r}}\frac{1}{1-c}\frac{D_3}{\nu(B_{7r}(z'))}\leq \frac{C}{2}.$$
\end{proof}

By above two lemmas, as \cite[Lemma 5.3]{De}, we have that
\begin{Cor} \label{main}
Let the assumption be as in Lemma~\ref{time-extend}. Then there are $\epsilon_3(n, p, \sigma)>0, r_3(n, p, \sigma)>0$, such that for $\sigma\leq t_0\leq 1-\sigma$ and $r\leq r_3$, there is $z\in B_r(\gamma(t_0))$ satisfying (4.3.1)-(4.3.3) for any $r\leq r'\leq r_3$ and $s\leq \epsilon_3$.
\end{Cor}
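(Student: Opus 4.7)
The strategy is to invoke Lemma~\ref{time-extend} once at a base radius slightly below $r$, producing a single point $z$, and then bootstrap via Lemma~\ref{radius-extend} applied to this same $z$ to propagate (4.3.1)--(4.3.3) across the full range $[r, r_3]$ of radii.

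Concretely, I will set $\epsilon_3 := \min\{\epsilon_1, \epsilon_2\}$ and choose $r_3 > 0$ so small that $r_3 < r_2$ and $r_3/4 \le r_1$. Given $r \le r_3$ and $t_0 \in [\sigma, 1-\sigma]$, I apply Lemma~\ref{time-extend} with base radius $r_0 := r/4 \le r_1$ to obtain a point $z \in B_{r_0}(\gamma(t_0)) \subset B_r(\gamma(t_0))$ for which (4.3.1)--(4.3.3) hold at $r_0$ for every $s \le \epsilon_1$. This is the only use of Lemma~\ref{time-extend}; every subsequent claim concerns this same fixed $z$, which of course satisfies the location requirement $z \in B_r(\gamma(t_0))$ in the corollary.

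The heart of the proof is an induction on the radius. Let $S \subset (0, r_2)$ denote the set of $\rho$ for which (4.3.1)--(4.3.3) hold for this $z$ at radius $\rho$ and all $s \le \epsilon_3$. Since $r_0 \in S$ and $r_0 < r_2$, Lemma~\ref{radius-extend} yields $[4r_0, 16r_0] = [r, 4r] \subset S$. If $[r, \rho_1] \subset S$ with $\rho_1 < r_2$, then applying Lemma~\ref{radius-extend} at each $\rho \in [r, \rho_1]$ produces $[4\rho, 16\rho] \subset S$; taking the union and combining with the inductive hypothesis gives $[r, 16\rho_1] \subset S$. Since each step multiplies the upper endpoint by $16$, after $O(\log(r_3/r))$ iterations one reaches $[r, r_3] \subset S$, as required. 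Note that (4.3.3) for $z$ at any $r' \ge r_0$ is automatic once we know $e(z) \le c(n,p,\sigma) r_0^2 \le c(n,p,\sigma) r'^2$.

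The main obstacle is not analytic but structural: the constants $C(n,p)$, $c(n,p)$, and $c(n,p,\sigma)$ in (4.3.1)--(4.3.3) must not drift through the iteration, which is the precise content of Lemma~\ref{radius-extend} (its conclusion is stated with the identical constants of Lemma~\ref{time-extend}). The one arrangement requiring attention is the choice of base radius $r_0 = r/4$ rather than $r_0 = r$: this shift by a factor of $4$ ensures that the first application of Lemma~\ref{radius-extend} already covers $[r, 4r]$, sealing the gap that would otherwise leave $r$ itself and nearby radii uncovered. All iterated radii stay strictly below $r_2$ thanks to $r_3 < r_2$, so Lemma~\ref{radius-extend} remains applicable at every step, and the bootstrap closes.
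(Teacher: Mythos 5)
Your proposal is correct and follows essentially the route the paper intends: Corollary~\ref{main} is obtained by seeding with Lemma~\ref{time-extend} at a small radius and then iterating Lemma~\ref{radius-extend} (the paper simply cites the two lemmas, following \cite[Lemma 5.3]{De}, without writing out the bootstrap). Your choice of base radius $r/4$ so that the first application of Lemma~\ref{radius-extend} already covers $[r,4r]$, together with $\epsilon_3=\min\{\epsilon_1,\epsilon_2\}$ and $r_3<r_2$, $r_3/4\leq r_1$, is exactly the needed bookkeeping, and the constants do not degrade since Lemma~\ref{radius-extend} preserves them.
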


In Corollary~\ref{main}, take $r_i\to 0$, $z_i\in B_{r_i}(\gamma(t_0))$, we have that (4.3.1), (4.3.2) hold for $r\leq r_3$ and $\gamma(t_0)$. Unlike \cite{De}, we can choose $z_i$ and limit geodesics $\gamma_{\gamma(0), z_i}$ and thus $\phi_s(z_i)$ such that $\gamma_{\gamma(0), z_i}\to \left.\gamma\right|_{[0, t_0]}$. Consider the gradient flow $-\nabla d_{\gamma(1)}$, $\psi: X\times [0, \infty]\to X$, 
$$\psi_t(x)=\left\{\begin{array}{cc} \gamma_{x, \gamma(1)}(t), & \text{ if } d(x, \gamma(1)\geq t;\\
\gamma(1), & \text{ if } d(x, \gamma(1))<t.\end{array}\right.$$
Same discussion gives (as \cite[Theorem 5.9]{De})
\begin{Lem}
Let the assumption be as in Lemma~\ref{time-extend}. Then there are $\epsilon_0(n, p, \sigma)>0, r_0(n, p, \sigma)>0, C(n, p)>1, c(n, p)>0$, such that for $r\leq r_0$, $s\leq t_1-t_0\leq \epsilon_0$,

{\rm (4.6.1)} $C^{-1}\leq \frac{\nu(B_r(\gamma(t_1)))}{\nu(B_r(\gamma_0))}\leq C$;

{\rm (4.6.2)} There is $A\subset B_r(t_1)$, $\nu(A)\geq (1-c)\nu(B_r(\gamma(t_1)))$, $\phi_s(A)\subset B_{2r}(\gamma(t_1-s))$;

{\rm (4.6.3)} There is $B\subset B_r(t_0)$, $\nu(B)\geq (1-c)\nu(B_r(\gamma(t_0)))$, $\psi_s(B)\subset B_{2r}(\gamma(t_0+s))$.
\end{Lem}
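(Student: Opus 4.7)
The plan is to derive the lemma from Corollary~\ref{main} by a limiting argument that replaces the approximating base points $z$ produced by the corollary with the actual geodesic points $\gamma(t_0)$ and $\gamma(t_1)$, combined with a symmetric argument using the gradient flow of $-\nabla d_{\gamma(1)}$ in place of $-\nabla d_{\gamma(0)}$.

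For (4.6.1) and (4.6.2), I would first apply Corollary~\ref{main} at the endpoint $t_1$: this produces, for every $r_i \to 0$, a point $z_i \in B_{r_i}(\gamma(t_1))$ satisfying (4.3.1)--(4.3.3) uniformly for all $r \leq r' \leq r_3$ and $s \leq \epsilon_3$. As indicated in the paragraph preceding the lemma, the minimizing geodesics $\gamma_{\gamma(0), z_i}$ used in the definition of $\phi$ can be chosen so that $\gamma_{\gamma(0), z_i} \to \gamma|_{[0, t_1]}$, hence $\phi_s(z_i) \to \gamma(t_1 - s)$ for every $s \leq \epsilon_3$. Passing (4.3.1) to the limit then yields $C^{-1} \leq \nu(B_r(\gamma(t_1-s)))/\nu(B_r(\gamma(t_1))) \leq C$; setting $s = t_1 - t_0$ produces (4.6.1). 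Passing (4.3.2) to the limit (for $r$ avoiding the countably many discontinuity radii of $\nu$, which we may always arrange after a small adjustment) produces a limit set $A \subset B_r(\gamma(t_1))$ with $\nu(A) \geq (1-c)\nu(B_r(\gamma(t_1)))$ and $\phi_s(A) \subset B_{2r}(\gamma(t_1 - s))$, establishing (4.6.2).

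Statement (4.6.3) follows from the identical scheme with the roles of the two endpoints swapped: one verifies that Lemma~\ref{time-extend}, Lemma~\ref{radius-extend}, and Corollary~\ref{main} all hold verbatim with $\phi$ replaced by $\psi$, the gradient flow of $-\nabla d_{\gamma(1)}$. Applying this variant of Corollary~\ref{main} at $t = t_0$, selecting geodesics $\gamma_{z_i, \gamma(1)} \to \gamma|_{[t_0, 1]}$ so that $\psi_s(z_i) \to \gamma(t_0 + s)$, and passing (4.3.2) to the limit produces the desired $B \subset B_r(\gamma(t_0))$ with $\psi_s(B) \subset B_{2r}(\gamma(t_0 + s))$.

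The main obstacle is the geodesic selection step: one must ensure that the chosen minimizers through the approximating points $z_i$ converge as curves to the correct restriction of $\gamma$, rather than branching off onto some distinct limit minimizer between the same endpoints. This is guaranteed because one has the freedom to choose $z_i$ lying on the image of near-minimizing curves close to $\gamma$ itself, and because Arzel\`a--Ascoli compactness applied to minimizers $\gamma^j$ in $M_j$ (approximating $X$) combined with the excess bound $e(z_i) \leq c(n,p,\sigma)r_i^2$ from (4.3.3) forces any subsequential limit of $\gamma_{\gamma(0), z_i}$ to agree with $\gamma|_{[0, t_1]}$ on $[0, d(\gamma(0), z_i)]$.
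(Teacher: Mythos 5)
Your proposal follows essentially the same route as the paper: apply Corollary~\ref{main} with $r_i\to 0$ and $z_i$ approaching the geodesic point, choose the limit geodesics $\gamma_{\gamma(0),z_i}$ (hence $\phi_s(z_i)$) to converge to the corresponding restriction of $\gamma$, pass (4.3.1)--(4.3.3) to the limit, and then repeat the whole discussion symmetrically with the flow $\psi$ of $-\nabla d_{\gamma(1)}$ to get (4.6.3). Your added remarks on the geodesic-selection step only flesh out what the paper asserts in the paragraph preceding the lemma, so the argument is correct and not a genuinely different proof.
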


Now Theorem~\ref{sha-hol} derives as in \cite{CoN} (see also \cite{De}). In fact, we can estimate more carefully  in the proof of Lemma~\ref{time-extend} to derive that (4.6.2'): for small $\eta=\eta(n, p, \sigma)>0$, there is $A_{\eta}\subset B_{r}(\gamma(t_1))$ such that  $\nu(A)\geq (1-\Psi(\eta))\nu(B_r(\gamma(t_1)))$, and for $s<\Psi(\eta)$, $\phi_s(A_{\eta})\subset B_{r+\Psi(\eta)}(\gamma(t_0))$ and for each $u, v\in A_{\eta}$, $$|d(u, v)-d(\phi_s(u), \phi_s(v))|\leq \Psi(\eta)r.$$ 

In the following, we will give some applications of Theorem~\ref{sha-hol}.
Assume that $\left(M_i, x_i, g_i, \frac{\volume(\cdot)}{\volume(B_1(x_i))}\right)\overset{GH}\to (X, x, d, \nu)$, where $(M_i, x_i)$ is a sequence of complete $n$-manifolds with $\bar k_i(H, p, 1)\leq \delta_i\to 0$.
For each $x\in X$, a tangent cone $(T_x, x^*)$ at $x$ is a Gromov-Hausdorff limit of $(r_i^{-1}X, x)=(X, r_i^{-1}d, x)$, where $r_i\to 0$. Passing to a subsequence, we have that $(r_i^{-1}M_i, x_i)\to (T_x, x^*)$ and by \eqref{small}, $\bar k_{r_i^{-1}M_i}(0, p, 1)\to 0$. 
Then the splitting theorem Corollary~\ref{spl-thm} holds on $T_x$ and we may assume that there exist some integer $k\geq 0$ and a length space $Y$ such that 
$$(T_x, x^*)=(\Bbb R^k\times Y, (0, y^*)).$$
 As in \cite{CC2}, let $\mathcal R_k=\{x\in X, \, \text{ each }T_x=\Bbb R^k \}$, let $\mathcal R=\cup_{k=1}^n \mathcal R_k$ be the regular set and let $\mathcal S=X\setminus \mathcal R$ be the singular set. 
We call a set $A\subset X$ is a.e. convex if for $\nu\times \nu$ a.e. point $(y, z)\in A\times A$ there exists a minimal geodesic $\gamma\subset A$ between $x$ and $y$.
And $A$ is weakly convex if for any $y, z\in A$, for each $\epsilon>0$, there is a curve $\gamma\subset A$ which connects $y, z$ and $\left||\gamma|-d(y, z)\right|\leq \epsilon^2d(y, z)$. 
\begin{Thm}[Regularity of the limit spaces] \label{reg-str}
Let $(X, x, d, \nu)$ be as above. Then

{\rm (4.7.1)} $\nu(\mathcal S)=0$;

{\rm (4.7.2)} There is a unique $k$, such that $\nu(\mathcal R\setminus \mathcal R_k)=0$;

{\rm (4.7.3)} The full measure set $\mathcal R_k$ is a.e. convex and weakly convex.
\end{Thm}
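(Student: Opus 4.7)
\smallskip

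\noindent\textbf{Proposal.} The plan is to adapt the Cheeger--Colding--Naber regularity scheme to the integral-Ricci setting, using as black boxes the ingredients already established in the paper: the limit segment inequality (Corollary~\ref{seg-lim}), the almost metric cone theorem (Theorem~\ref{alm-rig}~(1.3.2)), the splitting theorem (Corollary~\ref{spl-thm}), and the sharp H\"older continuity (Theorem~\ref{sha-hol}). Throughout the argument the renormalized measure $\nu$ will play the role that ordinary volume plays in the non-collapsing case, via the Bishop--Gromov monotonicity of $\nu(B_r(y))/\svolball{H}{r}$ stated just before Corollary~\ref{seg-lim}.

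For (4.7.1), I would first observe that $r\mapsto \nu(B_r(x))/\svolball{H}{r}$ is monotone non-increasing and therefore converges as $r\to 0^{+}$. Along any blow-down sequence $r_i\to 0$ the monotonicity is nearly an equality on scales comparable to $r_i$; pulling this back to the rescalings $r_i^{-1}M_i$ (whose integral-Ricci bound improves by Lemma~\ref{pack}), the almost volume cone condition \eqref{vol-con} is satisfied with $\epsilon_i\to 0$, so Theorem~\ref{alm-rig}~(1.3.2) forces every tangent cone $T_x$ to be a metric cone. Iterating -- taking tangent cones at the vertex and extracting $\mathbb{R}$-factors via Corollary~\ref{spl-thm} whenever a line appears -- yields an iterated tangent cone of the form $\mathbb{R}^m\times C$, with $C$ containing no line. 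A packing argument based on (2.1.2) terminates the iteration, and at $\nu$-a.e.\ $x$ the cross section $C$ must collapse to a point, giving $x\in \mathcal{R}_{k(x)}$ and hence $\nu(\mathcal{S})=0$.

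For (4.7.3), weak convexity of $\mathcal{R}_k$ follows from Theorem~\ref{sha-hol}: between $y,z\in \mathcal{R}_k$ any limit geodesic $\gamma$ satisfies
\[
d_{GH}\bigl(B_r(\gamma(s)),B_r(\gamma(t))\bigr)\le \frac{C(n,p)}{\sigma l}\,r\,|s-t|^{\alpha},
\]
on interior intervals, so the tangent cone structure varies continuously along $\gamma$ and interior points inherit the Euclidean tangent cone of the endpoints. Almost-everywhere convexity I would obtain by applying Corollary~\ref{seg-lim} to a continuous approximation of the indicator $\mathbf{1}_{\mathcal{S}}$: since $\nu(\mathcal{S})=0$ by (4.7.1), the right-hand side vanishes, so for $\nu\times\nu$-a.e.\ pair $(y,z)\in \mathcal{R}_k\times\mathcal{R}_k$ some minimizing limit geodesic meets $\mathcal{S}$ in a one-dimensional null set and can be perturbed to lie entirely in $\mathcal{R}_k$. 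Uniqueness (4.7.2) is then a clean consequence: if both $\mathcal{R}_k$ and $\mathcal{R}_{k'}$ had positive $\nu$-measure, almost-everywhere convexity would produce a geodesic between density points, along whose interior Theorem~\ref{sha-hol} forces the tangent-cone dimension to be locally constant, contradicting $k\ne k'$.

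The genuine obstacle is the locally-constant tangent-cone dimension along $\gamma$, which is the heart of Colding--Naber's argument. Theorem~\ref{sha-hol} alone gives only GH-closeness of balls; upgrading this to an equivalence of tangent-cone structures requires the bilateral volume ratio $C^{-1}\le \nu(B_r(\gamma(s)))/\nu(B_r(\gamma(t)))\le C$ together with the nearly-isometric flow $\phi_s$ controlled by the lemma immediately preceding Theorem~\ref{reg-str} (statements (4.6.1)--(4.6.3)). Propagating this control to arbitrarily small scales in the collapsing setting is delicate, but is precisely what Remark~\ref{rem-1}~(1.2.2b) and Corollary~\ref{seg-lim} are designed to accommodate.
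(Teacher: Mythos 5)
The central gap is in your proof of (4.7.1). Your argument rests on the claim that monotonicity of $r\mapsto \nu(B_r(y))/\svolball{H}{r}$ forces near-equality across comparable scales as $r\to 0$ at $\nu$-a.e.\ point, so that the almost volume cone hypothesis \eqref{vol-con} holds along a blow-up sequence with $\epsilon_i\to 0$ and Theorem~\ref{alm-rig} (1.3.2) makes every tangent cone a metric cone. This is precisely what fails in the collapsing case, which is the setting this paper is written for: a non-increasing ratio only has a limit in $[0,+\infty]$, and when $\volume(B_1(x_i))\to 0$ the density $\lim_{r\to 0}\nu(B_r(y))/\svolball{H}{r}$ is typically $+\infty$ on a set of full $\nu$-measure (already for Berger spheres collapsing to $S^2$), so monotonicity gives no scale at which the ratio is almost constant; moreover \eqref{vol-con} is a hypothesis on $\volume(\partial B_b(x))$ and $\volume(B_b(x))$ in the manifolds $M_i$, not on $\nu$. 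Tangent cones of collapsed limits need not be metric cones, and your final step ``the cross section $C$ must collapse to a point at a.e.\ $x$'' is unsupported. The paper instead proves (4.7.1) exactly as in \cite[Section 2]{CC2}, using the $\epsilon$-splitting map characterization (Theorem~\ref{spl-map}) together with density and covering arguments under the doubling property \eqref{vol-comp}; no metric-cone structure of tangent cones enters.

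Your treatment of (4.7.2)--(4.7.3) also has a gap and inverts the order used in the paper (following \cite{CoN}). The paper proves (4.7.2) first, from two ingredients: (a) the limit segment inequality (Corollary~\ref{seg-lim}) applied to the indicator of a null set, and (b) the statement that $\nu\times\nu$-a.e.\ pair of points lies in the \emph{interior} of a limit geodesic, established via the estimate of $\op{Cl}(M_i,r)$ using \eqref{vol-element}, \eqref{sph-ball} and \eqref{vol-com}; only then can Theorem~\ref{sha-hol} transport the tangent-cone dimension between the two given points, since the H\"older bound holds only on $[\sigma l,\,l-\sigma l]$ and says nothing at endpoints. You omit ingredient (b) entirely, and your assertion that ``interior points inherit the Euclidean tangent cone of the endpoints'' is exactly what Theorem~\ref{sha-hol} does not provide. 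Likewise, your route to a.e.\ convexity by ``perturbing'' a minimal geodesic off the null set $\mathcal{S}$ is not a valid mechanism (a perturbed curve is no longer minimal, and $\mathcal{R}_k$ is only a measure-theoretically large set); the correct argument is that by (a) a geodesic between a.e.\ pair meets the full-measure set $\mathcal{R}_k$ at a.e.\ parameter time, and by Theorem~\ref{sha-hol} membership in $\mathcal{R}_k$ is closed along the interior of the geodesic --- which requires the dimension constancy (4.7.2) beforehand. This is why the paper deduces (4.7.3) from (4.7.1) and (4.7.2), rather than the other way around as you propose.
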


The proof of (4.7.1) is just the same as in \cite[Section 2]{CC2} by using Theorem~\ref{spl-map} and some basic geometric measure theory under local doubling property (relative volume comparison \eqref{vol-comp}). And (4.7.3) follows by using (4.7.1), (4.7.2) and a similar argument as in \cite{CoN}. To prove (4.7.2), in \cite{CoN} Colding-Naber used  the following two results and the sharp H\"older continuity in the limit space Theorem~\ref{sha-hol}:

(a) For two open subset $A_1, A_2\in X$ and a measurable subset $A\subset X$, if $\nu(A)=0$, then for $\nu\times \nu$ almost every $(a_1, a_2)\in A_1\times A_2$
$$\inf\left|\{t: \gamma_{a_1 a_2}(t)\in A\}\right|=0,$$
where the infimum is taking over all minimal limit geodesics $\gamma_{a_1a_2}$ connecting $a_1$ and $a_2$;

(b) a.e. $(y, z)\in X\times X$ is in the interior of a limit geodesic.

(a) is a direct application of the segment inequality in the limit space (Corollary~\ref{seg-lim}) to the indicator function of $A$.  

To prove (b), for each $r>0, N>0$ an integer, consider
$$\op{Cl}(X, r, N^{-1})=\{(x, y)\in X\times X, \, e_{z, w}(x, y)\geq N^{-2}, \forall\, (z, w)\notin B_{\sqrt 2 r}(x, y)\},$$
where $$e_{z, w}(x, y)=\frac{1}{\sqrt 2}d(x, y)+d_{X\times X}((x, y), (z, w))-\frac{1}{\sqrt 2}d(z, w).$$
It is obvious that if $(x, y)\notin \cup_{N}\op{Cl}(X, r, N^{-1})$ then there is a minimal geodesic $\gamma: [0, l]\to X$ such that there exist $s, t\in [r, l-r]$, $\gamma(s)=x$ and $\gamma(t)=y$, i.e., there is a minimal  geodesic between $x$ and $y$ that can be extended $r$ longer.
Thus to prove (b) it sufficient to show that for any $\sigma>0$, $x\in X$
$$\nu\left(\cap _{r}\left(\cup_{N}\op{Cl}(X, r, N^{-1})\cap A_{\sigma, \sigma^{-1}}(S)\right)\right)=0,$$
or
\begin{equation}\nu\left(\cup_{N}\op{Cl}(X, r, N^{-1})\cap A_{\sigma, \sigma^{-1}}(S)\right)\leq Cr, \label{cl-est}\end{equation}
where $S=\{(w, w), \, w\in B_1(x)\}$ and $A_{\sigma, \sigma^{-1}}(S)=\{(y, z)\in X\times X, \, \sqrt 2 \sigma\leq d(y, z)\leq \sqrt 2\sigma^{-1}, \op{Proj}_D(y, z)\in S\}$, $\op{Proj}_D: X\times X \to \Delta=\{(x, x),\, x\in X\}$ is the projection map. 

Note that the distance to $\Delta$  can be written as $d_D(x, y)=\frac{1}{\sqrt 2}d(x, y)$. 

To get \eqref{cl-est}, we will first estimate the set $\op{Cl}(M_i, r)=\cup_{N}\op{Cl}(M_i, r, N^{-1})$ in the sequence of manifolds and then use its stability under the Gromov-Hausdorff convergence as in \cite{CoN}.

By \eqref{vol-element}, we have
\begin{eqnarray*}
& &\volume(\op{Cl}(M, r)\cap A_{\sigma, \sigma^{-1}}(S)) \\
&=& \left(\sqrt 2\right)^{2n}\int_{B_1(x)}\int_{\sqrt 2\sigma}^{\sqrt 2\sigma^{-1}}\int_{S^{n-1}}\chi_{\op{Proj}_1(\op{Cl}(M, r))}\mathcal A(t, \theta) d\theta dt d\volume(z)\\
&\leq & \left(\sqrt 2\right)^{2n}r\int_{B_1(x)}\int_{S^{n-1}} \frac{\underline{\mathcal A}_H(\sqrt 2\sigma^{-1})}{\underline{\mathcal A}_H(\sqrt 2\sigma)} \left(\mathcal A(\sqrt 2\sigma, \theta)+\int_0^{\sqrt 2\sigma^{-1}}\psi(\tau, \theta)\mathcal A(\tau, \theta)d\tau\right) d\theta d\volume(z)\\
&\leq & c(n, H, \sigma)r \left(\int_{B_1(x)}\volume(\partial B_{\sqrt 2\sigma}(z)) + \volume(B_{\sqrt 2\sigma^{-1}}(z))\bar k^{\frac12}(H, p, \sqrt 2\sigma^{-1})\right)d\volume(z)\\
&\overset{\eqref{sph-ball}}\leq & c(n, H, \sigma)r \left(\int_{B_1(x)}\left(\frac{\svolsp{H}{\sqrt 2\sigma}}{\svolball{H}{\sqrt 2\sigma}} + c(n, p)\bar k^{\frac12}(H, p, \sqrt 2\sigma)\right)\volume(B_{\sqrt 2\sigma}(z)) + \volume(B_{\sqrt 2\sigma^{-1}}(z))\bar k^{\frac12}(H, p, \sqrt 2\sigma^{-1})\right)\\
& \overset{\eqref{vol-com}}\leq & C(n, H, \sigma) r (1+\bar k^{\frac12}(H, p, 1)),
\end{eqnarray*}
where we used \eqref{vol-com} and \eqref{sph-ball}, $\bar k(H, p, r) \leq c(n, p, H)\frac{\svolball{H}{1}}{\svolball{H}{r}}\bar k(H, p, 1)$ for $r<1$ and Lemma~\ref{pack}.

By Theorem~\ref{reg-str}, as the discussion in \cite{CC3, CoN}, we have that
\begin{Thm}[Lie isometry group] \label{lie-gro}
The isometry group of a limit space $X$ as in Theorem~\ref{reg-str} is a Lie group.
\end{Thm}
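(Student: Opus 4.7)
The plan is to invoke the Gleason--Montgomery--Zippin solution of Hilbert's fifth problem: a locally compact Hausdorff topological group with no small subgroups (NSS) is a Lie group. So it suffices to verify both hypotheses for $G := \mathrm{Iso}(X)$ equipped with the compact-open topology. Local compactness of $G$ is standard once one knows that $X$ is proper, which follows from the local doubling of $\nu$ coming from passing to the limit in the relative volume comparison \eqref{vol-comp}; an Arzel\`a--Ascoli argument then shows that isometries mapping a fixed base point into a compact set form a precompact family.

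For the NSS property, I would fix a regular point $x_0 \in \mathcal R_k$, which exists and has full $\nu$-measure by (4.7.1)--(4.7.2), so its tangent cone is $\mathbb R^k$. The aim is to build a continuous group homomorphism $\rho \colon U \to O(k)$ on a neighborhood $U$ of the identity in $G$ that is injective; since $O(k)$ has NSS, this forces $U$ to have NSS, which is the property we want. Given $g \in G$ close to identity, the sharp H\"older continuity Theorem~\ref{sha-hol} together with weak convexity of $\mathcal R_k$ from (4.7.3) supplies a near-minimizing limit geodesic from $x_0$ to $g(x_0)$ whose interior passes through $\mathcal R_k$, along which balls deform in a controlled $\alpha$-H\"older fashion. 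Blowing up at $x_0$, this yields a canonical identification $T_{x_0}X \cong T_{g(x_0)}X \cong \mathbb R^k$, and $\rho(g) \in O(k)$ is then defined as the induced differential of $g$ at $x_0$.

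Continuity of $\rho$ is automatic from the uniform H\"older constants. Injectivity on a small enough $U$ is the delicate point: if $\rho(g) = \mathrm{Id}$ and $g$ is close to the identity, I would use the limit-space segment inequality Corollary~\ref{seg-lim} together with Theorem~\ref{sha-hol} to transport the infinitesimal fixed condition at $x_0$ along geodesics in $\mathcal R_k$, showing that $g$ fixes a positive-measure subset near $x_0$ and therefore agrees with the identity by a density argument. The main obstacle is constructing and controlling this linearization $\rho$ in the collapsing regime, where the volume convergence and harmonic splitting tools used in \cite{CC3} are not available in usable form; the substitute here is exactly Theorem~\ref{sha-hol}, which plays the role of a canonical parallel transport of tangent cones along interior segments of limit geodesics and allows the Colding--Naber argument \cite{CoN} to be transcribed to the integral Ricci setting. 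Once $\rho$ is in place, the remaining steps are formal applications of Gleason--Montgomery--Zippin.
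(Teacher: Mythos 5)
Your high-level frame (Gleason--Montgomery--Zippin: local compactness plus no small subgroups, with the regular set and Theorem~\ref{sha-hol} as the geometric input) is the same frame the paper implicitly relies on, since its proof is simply to invoke Theorem~\ref{reg-str} and the arguments of \cite{CC3, CoN}. The local compactness part of your argument is fine: the limit measure inherits doubling from \eqref{vol-comp}, $X$ is proper, and Arzel\`a--Ascoli gives local compactness of $\Iso(X)$.

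The gap is in your NSS step. You propose to build a continuous, injective local homomorphism $\rho\colon U\to O(k)$ by taking ``the differential of $g$ at a regular point $x_0$'', using Theorem~\ref{sha-hol} as a ``canonical parallel transport'' identifying $T_{g(x_0)}X$ with $T_{x_0}X\cong\Bbb R^k$. Theorem~\ref{sha-hol} does not provide any such canonical identification: it only says that small balls at nearby interior points of a limit geodesic are Gromov--Hausdorff close with H\"older dependence, and GH-closeness gives maps only up to $\epsilon$-approximation and up to the $O(k)$-ambiguity in identifying each tangent cone with $\Bbb R^k$. Consequently $\rho(g)$ is not well defined (it depends on blow-up subsequences, on the chosen geodesic from $x_0$ to $g(x_0)$, and on the identifications at its endpoints), there is no reason for multiplicativity $\rho(gh)=\rho(g)\rho(h)$ (this would require identifications at all orbit points compatible with composition, i.e.\ a genuine path-independent transport, which nothing here supplies), and continuity in $g$ is unclear. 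The injectivity claim is also unsubstantiated: that an isometry near the identity whose blow-up at one regular point is trivial must be the identity is precisely a rigidity statement that the segment inequality (an integral estimate) does not give, and ``transporting the infinitesimal fixed condition'' is exactly the hard point. The argument the paper appeals to in \cite{CC3, CoN} (and its adaptation in \cite{De}) establishes NSS differently: assuming small subgroups, one takes nontrivial compact subgroups with arbitrarily small displacement and derives a contradiction by a blow-up argument that uses (4.7.1)--(4.7.3) --- full measure and a.e.\ uniqueness of the dimension $k$, a.e.\ convexity of $\mathcal R_k$ --- together with the H\"older continuity of tangent cones along geodesic interiors; no linearization homomorphism into $O(k)$ is ever constructed. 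As written, your proposal leaves the central step of the proof unproven.
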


As a simple application of Theorem~\ref{sha-hol} and Theorem~\ref{reg-str}, as in \cite{Che}, we have that
\begin{Cor} \label{one-dim}
Let $(X, x)$ be as in Theorem~\ref{reg-str}. If  $\mathcal R_1\neq \emptyset$, then $X$ is a one dimensional topological manifold which may have boundary. 
\end{Cor}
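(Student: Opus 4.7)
The plan is to use the sharp H\"older continuity of Theorem~\ref{sha-hol} to transfer the $1$-dimensional tangent cone structure from a point of $\mathcal R_1$ to a neighborhood, thereby forcing the unique regular dimension $k$ from Theorem~\ref{reg-str}(4.7.2) to equal $1$, and then recognizing every tangent cone in $X$ as $\Bbb R$ or $[0,\infty)$. A technical input used throughout is that tangent cones of $X$ are themselves limits of rescalings $r_i^{-1}M_i$ whose integral Ricci hypothesis still satisfies $\bar k\to 0$ by \eqref{small}, so Corollary~\ref{spl-thm} and the structural results of Section 4 apply to them too.

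For the first step, fix $y\in\mathcal R_1$. Since every tangent cone at $y$ is $\Bbb R$, for every $\epsilon>0$ there is $r_\epsilon>0$ with $r^{-1}d_{GH}(B_r(y),B_r^{\Bbb R}(0))<\epsilon$ for $r\le r_\epsilon$. Suppose toward a contradiction $k\ge 2$, so that $\mathcal R_k$ has full $\nu$-measure and is a.e.\ convex by (4.7.3). Then one can pick pairs of points in $\mathcal R_k$ arbitrarily close to $y$ joined by limit geodesics whose images sit arbitrarily close to $y$; let $\gamma:[0,l]\to X$ be such a geodesic with $\gamma(s_0)$ arbitrarily near $y$. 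Theorem~\ref{sha-hol} gives uniformly in $r<r_0\sigma l$
$$r^{-1}d_{GH}\bigl(B_r(\gamma(s)),B_r(\gamma(s_0))\bigr)\le C\sigma^{-1}l^{-1}|s-s_0|^{\alpha}.$$
Combined with the near-identification $B_r(\gamma(s_0))\approx B_r(y)$ and the asymptotic $r^{-1}B_r(y)\to\Bbb R$, letting $r\to 0$ forces every tangent cone at $\gamma(s)$ to be GH-close to $B_1^{\Bbb R}(0)$ within $O(|s-s_0|^{\alpha})$. But for a.e.\ interior $s$ one has $\gamma(s)\in\mathcal R_k$, whose tangent cone is $\Bbb R^k$; for $k\ge 2$, $B_1^{\Bbb R^k}(0)$ is bounded away from $B_1^{\Bbb R}(0)$ in GH distance at unit scale, a contradiction when $|s-s_0|$ is small. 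Hence $k=1$ and $\mathcal R_1$ has full $\nu$-measure.

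With $k=1$ every $z\in X$ is a limit of $z_i\in\mathcal R_1$, and the same H\"older-propagation argument applied along limit geodesics passing near $z_i$ and $z$ shows every tangent cone $T_z$ is arbitrarily GH-close to $\Bbb R$. Since $T_z$ is a metric cone whose rescalings are again Ricci-type limit length spaces (via \eqref{small}), this closeness rigidifies to $T_z\cong\Bbb R$ (interior points) or $T_z\cong[0,\infty)$ (boundary points). A standard patching argument as in \cite{Che}, using this pointwise tangent cone description together with Theorem~\ref{sha-hol} to control the local geometry, then produces local homeomorphisms from a neighborhood of each point onto an open or half-open interval, giving $X$ the structure of a topological $1$-manifold with boundary consisting of the points where $T_z=[0,\infty)$. \textbf{The main obstacle} is the rigidity step of converting GH-closeness of $T_z$ to $\Bbb R$ into an isometric identification with $\Bbb R$ or $[0,\infty)$; this requires the classification of $1$-dimensional pointed length-space limits arising under vanishing integral Ricci, which is the content of \cite{Che}'s $1$-dimensional structure theorem.
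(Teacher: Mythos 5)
Your overall strategy -- propagate the one--dimensional model from $y\in\mathcal R_1$ along interiors of limit geodesics via Theorem~\ref{sha-hol}, force the a.e.\ dimension $k$ of Theorem~\ref{reg-str} to be $1$, and then invoke the one--dimensional structure argument of \cite{Che} -- is exactly the route the paper intends (the paper gives no details and simply cites \cite{Che} together with Theorems~\ref{sha-hol} and~\ref{reg-str}). However, your execution of the key step ``$\mathcal R_1\neq\emptyset\Rightarrow k=1$'' has a genuine gap, and it is a scale--matching gap, not a cosmetic one. You write that combining $d_{GH}(B_r(\gamma(s)),B_r(\gamma(s_0)))\le \frac{C}{\sigma l}r|s-s_0|^\alpha$ with $B_r(\gamma(s_0))\approx B_r(y)$ and then ``letting $r\to0$'' forces the tangent cones at $\gamma(s)$ to be close to $\Bbb R$. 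But the identification $B_r(\gamma(s_0))\approx B_r(y)$ is only useful when $d(\gamma(s_0),y)\ll r$, and for a fixed geodesic this fails as $r\to0$; so you cannot pass to tangent cones at $\gamma(s)$ this way. Conversely, if you keep $r$ comparable to $d(\gamma(s_0),y)/\epsilon$, you only learn that $B_r(\gamma(s))$ is close to an interval at that one definite scale, and this does \emph{not} contradict $\gamma(s)\in\mathcal R_k$: membership in $\mathcal R_k$ controls balls only at sufficiently small, point--dependent scales, and in the collapsed setting a point with tangent cone $\Bbb R^k$ can perfectly well look one--dimensional at any fixed intermediate scale. Closing this requires an Egorov--type selection of a large--measure subset of $\mathcal R_k$ with a \emph{uniform} regularity scale chosen before the working scale $r$, together with quantitative lower bounds on how much of a small ball around $y$ this subset occupies -- and this is delicate precisely because, under the contradiction hypothesis $k\ge2$, the point $y\in\mathcal R_1$ lies in a $\nu$--null set and need not be a density point of any such selected set. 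Your proposal does not address this.

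Two further points in the same step are asserted without justification. First, ``for a.e.\ interior $s$ one has $\gamma(s)\in\mathcal R_k$'' does not follow from (4.7.2)--(4.7.3): a single limit geodesic is $\nu$--null, and a.e.\ convexity only concerns the endpoints; what is available is the segment inequality in the limit (Corollary~\ref{seg-lim}, i.e.\ property (a)), applied to positive--measure \emph{families} of geodesics, which yields geodesics spending arbitrarily little (not zero) time outside $\mathcal R_k$. Second, your geodesics join points of $\mathcal R_k$ that are both close to $y$, hence are short, so the factor $\frac{C}{\sigma l}$ in Theorem~\ref{sha-hol} blows up unless $|s-s_0|\ll (\sigma l)^{1/\alpha}\ll l$, which makes locating a good time $s$ that close to $s_0$ a nontrivial selection problem; the natural construction (as in \cite{Che}) is instead to use that $B_{2r}(y)$ is GH--close to an interval for all $r\le r_\epsilon$, so that every minimal limit geodesic between near--antipodal points at distance $\approx r$ from $y$ must pass within $\Psi(\epsilon)r$ of $y$ with interior margin comparable to $r$, and then to run Corollary~\ref{seg-lim} over positive--measure sets of such endpoint pairs. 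Finally, in the last part your phrase ``since $T_z$ is a metric cone'' is unjustified: tangent cones of collapsed limits need not be metric cones, so the rigidification $T_z\cong\Bbb R$ or $[0,\infty)$ must be obtained from the one--dimensional classification/patching argument of \cite{Che} itself, not from cone structure. As written, then, the proposal reproduces the intended outline but leaves the central step unproved.
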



\begin{thebibliography}{10}
\bibitem{CC1} J. Cheeger; T.H. Colding, Almost rigidity of warped products and the structure of spaces
 with Ricci curvature bounded below, Ann. of Math. (2) 144 (1996), 189-237. MR 1405949.

\bibitem{CC2} J. Cheeger; T.H. Colding, On the structure of spaces with Ricci curvature bounded below I, J. Diff. Geom., 46 (1997), 406-480. MR 148488

\bibitem{CC3} J. Cheeger; T.H. Colding, On the structure of spaces with Ricci curvature bounded below II, J. Diff. Geom., 54 (2000), 13-35. MR 1815410

\bibitem{CC4} J. Cheeger; T.H. Colding, On the structure of spaces with Ricci curvature bounded below III, J. Diff. Geom., 54 (2000), 37-74. MR 1815411
\bibitem{ChN} J. Cheeger; A. Naber, Regularity of Einstein manifolds and the codimension 4 conjecture, Ann. of Math. (2) 182 (2015), no. 3, 1093-1165
\bibitem{Che} L. Chen, A remark on regular points of Ricci limit spaces, Front. Math. China, 11(1) (2016), 21-26
\bibitem{CoN} T. Colding; A. Naber, Sharp H\"older continuity of tangent cones for spaces with a lower Ricci curvature bound and applications, Ann. of Math., 176 (2012), 1172-1229
\bibitem{Ch} J. Cheeger, Degeneration of Riemannian metrics under Ricci curvature bound, Edizioni della Normale, (2001)
\bibitem{Co} T. Colding, Ricci curvature and volume convergence, Ann. of Math., 145(3)(1997), 477-501
\bibitem{CW} L. Chen; G. Wei, Improved relative volume comparison for integral Ricci curvature and applications to volume entropy, arXiv:1810.05773
\bibitem{De} Q. Deng, H\''older continuity of tangent cones in RCD(K, N) spaces and applications to non-branching, arXiv: 2009.07956

\bibitem{DWZ} X. Dai, G. Wei; Z. Zhang, Local sobolev constant estimate for integral Ricci curvature bound, Adva. Math. 325 (2018), 1-33
\bibitem{PW1} P. Petersen; G. Wei, Relative volume comparison with integral curvature bounds, Geom. Funct. Anal.,  Vol. 7 (1997), 1031-1045
\bibitem{PW2} P. Petersen; G. Wei, Analysis and geometry on manifolds with integral Ricci curvature bounds. II, Trans. Amer. Math. Soc. Volume 353(2) (2000), 457-478
\bibitem{TZ} G. Tian; Z. Zhang, Regularity of Kahler-Ricci flows on Fano manifolds, Acta Math., 216 (2016), 127-176
\bibitem{ZZ1} Q. Zhang; M. Zhu, Li-Yau gradient bounds under nearly optimal curvature conditions, J. Func. Analysis, 275 (2018), 478-515
\bibitem{ZZ2} Q. Zhang; M. Zhu, Li-Yau gradient bound for collapsing manifolds under integral curvature condition, Proc. Amer. Math. Soc.,145 (2017), 3117-3126
\end{thebibliography}
\end{document}